\theoremstyle{plain}
\newtheorem{theorem}{Theorem}[section]
\newtheorem{proposition}[theorem]{Proposition}
\newtheorem{lemma}[theorem]{Lemma}
\newtheorem{corollary}[theorem]{Corollary}
\theoremstyle{definition}
\newtheorem{definition}[theorem]{Definition}
\newtheorem{remark}[theorem]{Remark}
\newtheorem{assumption}[theorem]{Assumption}
\theoremstyle{remark}
\renewenvironment{thebibliography}[1]{%
\begin{oldthebibliography}{#1}%
\setlength{\baselineskip}{.9em}
\linespread{.9}
\small
\setlength{\parskip}{0ex}%
\setlength{\itemsep}{.1em}%
}%
{%
\end{oldthebibliography}%
}
\newcommand{\q}{\quad}
\newcommand{\eps}{\varepsilon}
\newcommand{\F}{\mathbb{F}}
\newcommand{\G}{\mathbb{G}}
\renewcommand{\L}{\mathbb{L}}
\newcommand{\N}{\mathbb{N}}
\newcommand{\Q}{\mathbb{Q}}
\newcommand{\R}{\mathbb{R}}
\newcommand{\cE}{\mathcal{E}}
\newcommand{\cF}{\mathcal{F}}
\newcommand{\cG}{\mathcal{G}}
\newcommand{\cN}{\mathcal{N}}
\newcommand{\cP}{\mathcal{P}}
\newcommand{\cU}{\mathcal{U}}
\newcommand{\bD}{\mathbf{D}}
\DeclareMathOperator{\UC}{\textstyle{UC}}
\DeclareMathOperator{\esssup}{ess\, sup}
\DeclareMathOperator{\essinf}{ess\, inf}
\newcommand{\as}{\mbox{-a.s.}}
\newcommand{\qs}{\mbox{-q.s.}}
\renewcommand{\ae}{\mbox{-a.e.}}
\newcommand{\1}{\mathbf{1}}
\newcommand{\br}[1]{\langle #1 \rangle}
\newcommand{\homega}{\hat{\omega}}
\newcommand{\tomega}{\tilde{\omega}}
\newcommand{\bomega}{\bar{\omega}}
\newcommand{\comega}{\check{\omega}}
\numberwithin{equation}{section}
\begin{document}

\title{\vspace{-0cm}
A Quasi-Sure Approach to the Control of Non-Markovian Stochastic Differential Equations
\date{First version: June 16, 2011. This version: March 18, 2012.}
\author{
  Marcel Nutz%
  \thanks{
  Dept.\ of Mathematics, Columbia University, New York, \texttt{mnutz@math.columbia.edu}
  }
 }
}
\maketitle \vspace{-0em}
\begin{abstract}
We study stochastic differential equations (SDEs) whose drift and diffusion coefficients are path-dependent and controlled. We construct a value process on the canonical path space, considered simultaneously under a family of singular measures, rather than the usual family of processes indexed by the controls. This value process is characterized by a second order backward SDE, which can be seen as a non-Markovian analogue of the Hamilton-Jacobi-Bellman partial differential equation. Moreover, our value process yields a generalization of the $G$-expectation to the context of SDEs.
\end{abstract}

\vspace{.5em}

{\small
\noindent \emph{Keywords} Stochastic optimal control, non-Markovian SDE, second order BSDE, $G$-expectation, random $G$-expectation, volatility uncertainty, risk measure

\noindent \emph{AMS 2000 Subject Classifications}
93E20, %
49L20, %
60H10, %
60G44, %
91B30  %
}\\

\noindent \emph{Acknowledgements} Financial support by Swiss National Science Foundation
Grant PDFM2-120424/1 is gratefully acknowledged. The author thanks Shige Peng, Mete Soner, Nizar Touzi and Jianfeng Zhang for stimulating discussions and two anonymous referees for helpful comments.

\section{Introduction}\label{se:intro}

We consider a controlled stochastic differential equation (SDE) of the form
\begin{equation}\label{eq:SDEintro}
    X_t=x+\int_0^t\mu(r,X,\nu_r)\,dr+\int_0^t\sigma(r,X,\nu_r)\,dW_r,\quad 0\leq t\leq T,
\end{equation}
where $\nu$ is an adapted control process, $W$ is a Brownian motion and the Lipschitz-continuous coefficients $\mu(r,X,\nu_r)$ and $\sigma(r,X,\nu_r)$ may depend on the past trajectory $\{X_s, 0\leq s\leq r\}$ of the solution. Denoting by $X^\nu$ the solution corresponding to $\nu$, we are interested in the stochastic optimal control problem
\begin{equation}\label{eq:valueFctIntro}
  V_0:=\sup_\nu E[\xi(X^\nu)],
\end{equation}
where $\xi$ is a given functional. The standard approach to such a non-Markovian control problem (cf.\ \cite{ElKaroui.81, Elliott.82}) is to consider for each control $\nu$ the associated value process
\begin{equation}\label{eq:classicalValueProc}
  J^\nu_t = \mathop{\esssup}_{\tilde{\nu}:\; \tilde{\nu}=\nu \mbox{ on }[0,t]} E[\xi(X^{\tilde{\nu}})|\cF_t],
\end{equation}
where $(\cF_t)$ is the given filtration.
The dependence on $\nu$ reflects the presence of a \emph{forward} component in the optimization problem.

The situation is quite different in Markovian optimal control (cf.~\cite{FlemingSoner.06}), where one uses a single value function which depends on certain state variables but not on a control. This is essential to describe the value function by a differential equation, such as the Hamilton-Jacobi-Bellman PDE, which is the main merit of the dynamic programming approach. It is worth noting that this equation is always \emph{backward} in time.
An analogous description for~\eqref{eq:classicalValueProc} via backward SDEs (BSDEs, cf.\ \cite{PardouxPeng.90}) is available for certain popular problems such as utility maximization with power or exponential utility functions (e.g., \cite{HuImkellerMuller.05, Nutz.09b}) or drift control (e.g., \cite{Elliott.82}). However, this relies on a very particular algebraic structure which allows for a separation of $J^\nu$ into a backward part independent of $\nu$ and a forward part depending on $\nu$.

In this paper, we consider the problem~\eqref{eq:valueFctIntro} on the canonical space by recasting it as
\begin{equation}\label{eq:valueCanonIntro}
  V_0=\sup_\nu E^{P^\nu}[\xi(B)],
\end{equation}
where $P^\nu$ is the distribution of $X^\nu$ and $B$ is the canonical process, and we
describe its dynamic value by a single value process $V=\{V_t(\omega)\}$.
Formally, $V$ corresponds to a value function in the Markovian sense if we see the whole trajectory of the controlled system as a state variable. Even though~\eqref{eq:valueFctIntro} has features of coupled forward-backward type, the value process is defined in a purely backward manner: one may say that by constructing $V$ on the whole canonical space, we essentially calculate the value for all possible outcomes of the forward part. An important ingredient in the same vein is that $V$ is defined ``quasi-surely'' under the family of mutually singular measures $\{P^\nu\}$. Rather than forming a family of processes as in~\eqref{eq:classicalValueProc}, the necessary information is stored in a single process which is defined on a ``large'' part of the probability space; indeed, the process $V$ ``seen under $P^\nu$'' should be thought of as an analogue of $J^\nu$. Clearly, this is a necessary step to obtain a (second order) backward SDE. We remark that \cite{Peng.04} considered the same control problem~\eqref{eq:valueFctIntro} and also made a connection to nonlinear expectations. However, in \cite{Peng.04}, the value process was considered only under the given probability measure.

We first consider a fairly regular functional $\xi$ and define $V_t(\omega)$ as a conditional version of~\eqref{eq:valueCanonIntro}. Applying and advancing ideas from~\cite{SonerTouziZhang.2010aggreg} and \cite{Nutz.10Gexp}, regular conditional probability distributions are used
to define $V_t(\omega)$ for \emph{every} $\omega$ and prove a pathwise dynamic programming principle (Theorem~\ref{thm:DPP}). In a second step, we enlarge the class of functionals $\xi$ to an $L^1$-type space and prove that the value process admits a (quasi-sure) c\`adl\`ag modification (Theorem~\ref{th:modification}).

We also show that the value process falls into the class of sublinear expectations studied in~\cite{NutzSoner.10}. Indeed,
if $\xi$ is considered as a random variable on the canonical space, the mapping $\xi\mapsto V_t$ can be seen as a
generalization of the $G$-expectation~\cite{Peng.07, Peng.08}, which, by \cite{DenisHuPeng.2010}, corresponds to the case $\mu\equiv0$ and $\sigma(r,X,\nu_r)=\nu_r$, where the SDE~\eqref{eq:SDEintro} degenerates to a stochastic integral. Moreover, $V_t$
can be seen as a variant of the random $G$-expectation~\cite{Nutz.10Gexp}; cf.\ Remark~\ref{rk:randomG}.

Finally, we characterize $V$ by a second order backward SDE (2BSDE) in the spirit of~\cite{NutzSoner.10}; cf.\ Theorem~\ref{th:2bsde}. The second order is clearly necessary since the Hamilton-Jacobi-Bellman PDE for the Markovian case is fully nonlinear, while ordinary BSDEs correspond to semilinear equations.
2BSDEs were introduced in~\cite{CheriditoSonerTouziVictoir.07}, and in \cite{SonerTouziZhang.2010bsde} for the non-Markovian case. We refer to \cite{SonerTouziZhang.2010bsde} for the precise relation between 2BSDEs in the quasi-sure formulation and fully nonlinear parabolic PDEs.

We remark that our approach is quite different from the (backward) stochastic \emph{partial} differential equations studied in \cite{Peng.92a} for a similar control problem (mainly for uncontrolled volatility) and in \cite{LionsSouganidis.98a, LionsSouganidis.98b, BuckdahnMa.01a, BuckdahnMa.01b, BuckdahnMa.07} for so-called pathwise stochastic control problems. The relation to the path-dependent PDEs, introduced very recently in~\cite{Peng.11}, is yet to be explored.

The remainder of the paper is organized as follows. In Section~\ref{se:valueFct} we detail the controlled SDE and its conditional versions, define the value process for the case when $\xi$ is uniformly continuous and establish its regularity. The pathwise dynamic programming principle is proved in Section~\ref{se:dynamicProg}. In Section~\ref{se:extension} we extend the value process to a more general class of functionals $\xi$ and state its quasi-sure representation. The c\`adl\`ag modification is constructed in Section~\ref{se:modification}. In the concluding Section~\ref{se:2BSDE}, we provide the Hamilton-Jacobi-Bellman 2BSDE and interpret the value process as a variant of the random $G$-expectation.

\section{Construction of the Value Function}\label{se:valueFct}

In this section, we first introduce the setting and notation. Then, we define the value function $V_t(\omega)$ for a uniformly continuous reward functional $\xi$ and examine the regularity of $V_t$.

\subsection{Notation}

We fix a constant $T>0$ and let $\Omega:= C([0,T];\mathbbm{R}^d)$ be
the canonical space of continuous paths equipped with the uniform norm $\|\omega\|_T:=\sup_{0\leq s\leq T} |\omega_s|$, where
$|\cdot|$ is the Euclidean norm. We denote by $B$ the canonical process $B_t(\omega)=\omega_t$, by $P_0$ the Wiener measure, and by
$\F= \{\cF_t\}_{0\leq t\leq T}$ the (raw) filtration generated by $B$. Unless otherwise stated, probabilistic notions requiring a filtration (such as adaptedness) refer to $\F$.

For any probability measure $P$ on $\Omega$ and any $(t,\omega)\in [0,T]\times \Omega$, we can construct the corresponding regular conditional probability distribution $P^\omega_t$; cf.~\cite[Theorem~1.3.4]{StroockVaradhan.79}. We recall that $P^\omega_t$ is a probability kernel on $\cF_t\times\cF_T$; i.e., $P^\omega_t$ is a probability measure on $(\Omega,\cF_T)$ for fixed $\omega$ and $\omega\mapsto P^\omega_t(A)$ is $\cF_t$-measurable for each $A\in\cF_T$. Moreover, the expectation under $P^\omega_t$ is the conditional expectation under $P$:
\[
  E^{P^\omega_t}[\xi]=E^P[\xi|\cF_t](\omega)\quad P\as
\]
whenever $\xi$ is $\cF_T$-measurable and bounded. Finally, $P^\omega_t$ is concentrated on the set of paths
that coincide with $\omega$ up to $t$,
\begin{equation}\label{eq:measureConcentrated}
  P^\omega_t\big\{\omega'\in \Omega: \omega' = \omega \mbox{ on } [0,t]\big\} = 1.
\end{equation}
While $P^\omega_t$ is not defined uniquely by these properties, we choose and fix one version for each triplet $(t,\omega,P)$.

Let $t\in[0,T]$.
We denote by $\Omega^t:= \{\omega\in C([t,T];\R^d): \omega_t=0\}$
the shifted canonical space of paths starting at the origin.
For $\omega\in \Omega$, the shifted path $\omega^t\in \Omega^t$ is defined by
$\omega^t_r := \omega_r-\omega_t$ for $t\leq r\leq T$, so that $\Omega^t=\{\omega^t:\,\omega\in\Omega\}$.
Moreover, we denote by $P^t_0$ the Wiener measure on $\Omega^t$ and by $\F^t=\{\cF^t_r\}_{t\leq r\leq T}$ the (raw)
filtration generated by $B^t$, which can be identified with the canonical process on
$\Omega^t$.

Given two paths $\omega$ and $\tomega$, their concatenation at $t$ is the (continuous) path defined by
\[
 (\omega\otimes_t \tomega)_r := \omega_r \1_{[0,t)}(r) + (\omega_t + \tomega_r^t) \1_{[t, T]}(r),\quad 0\leq r\leq T.
\]
Given an $\cF_{T}$-measurable random variable $\xi$ on $\Omega$ and $\omega\in \Omega$,
we define the conditioned random variable $\xi^{t,\omega}$ on $\Omega$ by
\[
  \xi^{t, \omega}(\tomega) :=\xi(\omega\otimes_t \tomega),\q \tomega\in\Omega.
\]
Note that $\xi^{t, \omega}(\tomega)=\xi^{t, \omega}(\tomega^t)$; in particular, $\xi^{t, \omega}$ can also be seen as a random variable on $\Omega^t$. Then $\tomega\mapsto \xi^{t, \omega}(\tomega)$ is $\cF^t_{T}$-measurable and moreover, $\xi^{t,\omega}$ depends only on the restriction of $\omega$ to $[0,t]$.
We note that for an $\F$-progressively measurable process $\{X_r,\, r\in [s,T]\}$,
the conditioned process $\{X^{t, \omega}_r,\, r\in [t,T]\}$  is $\F^t$-progressively measurable.
If $P$ is a probability on $\Omega$, the measure $P^{t,\omega}$ on $\cF^t_T$ defined by
\[
  P^{t,\omega}(A):=P^\omega_t(\omega\otimes_t A),\quad A\in \cF^t_T, \quad\mbox{where }\omega\otimes_t A:=\{\omega\otimes_t \tomega:\, \tomega\in A\},
\]
is again a probability by~\eqref{eq:measureConcentrated}. We then have
\[
  E^{P^{t,\omega}}[\xi^{t,\omega}]=E^{P^\omega_t}[\xi] =E^P[\xi|\cF_t](\omega)\quad P\as
\]
Analogous notation will be used when $\xi$ is a random variable on $\Omega^s$ and $\omega\in\Omega^s$, where $0\leq s\leq t\leq T$.
We  denote by $\Omega^s_t:=\{\omega|_{[s,t]}:\,\omega\in\Omega^s\}$ the restriction of $\Omega^s$ to $[s,t]$, equipped with $\|\omega\|_{[s,t]}:=\sup_{r\in[s,t]}|\omega_r|$. Note that $\Omega^s_t$ can be identified with $\{\omega\in\Omega^s:\, \omega_r=\omega_t\mbox{ for }r\in [t,T]\}$. The meaning of $\Omega_t$ is analogous.

\subsection{The Controlled SDE}

Let $U$ be a nonempty Borel subset of $\R^m$ for some $m\in\N$.
We consider two given functions
\begin{equation*}%
  \mu: [0,T]\times\Omega\times U\to \R^d \quad\mbox{and}\quad\sigma: [0,T]\times\Omega\times U\to \R^{d\times d},
\end{equation*}
the drift and diffusion coefficients, such that
$(t,\omega)\mapsto \mu(t,X(\omega),\nu_t(\omega))$ and $(t,\omega)\mapsto \sigma(t,X(\omega),\nu_t(\omega))$ are progressively measurable
for any continuous adapted process $X$ and any $U$-valued progressively measurable process $\nu$.
In particular, $\mu(t,\omega,u)$ and $\sigma(t,\omega,u)$ depend only on the past trajectory $\{\omega_r,\,r\in [0,t]\}$, for any $u\in U$.
Moreover, we assume that there exists a constant $K>0$ such that
\begin{equation}\label{eq:LipschitzSigma}
  |\mu(t,\omega,u)-\mu(t,\omega',u)| + |\sigma(t,\omega,u)-\sigma(t,\omega',u)|\leq K\|\omega-\omega'\|_t
\end{equation}
for all $(t,\omega,\omega',u)\in [0,T]\times\Omega\times\Omega\times U$.
We denote by $\cU$ the set of all $U$-valued progressively measurable processes $\nu$ such that
\begin{equation}\label{eq:sigmaIntegrable}
  \int_0^T |\mu(r,X,\nu_r)|\,dr<\infty \quad\mbox{and}\quad \int_0^T |\sigma(r,X,\nu_r)|^2\,dr<\infty
\end{equation}
hold path-by-path for any continuous adapted process $X$.
Given $\nu\in\cU$, the stochastic differential equation
\begin{equation*}%
    X_t=x+\int_0^t\mu(r,X,\nu_r)\,dr+\int_0^t\sigma(r,X,\nu_r)\,dB_r,\quad 0\leq t\leq T \quad\mbox{under }P_0
\end{equation*}
has a $P_0$-a.s.\ unique strong solution for any initial condition $x\in\R^d$, which we denote by $X(0,x,\nu)$.
We shall denote by
\begin{equation}\label{eq:distribXNoShift}
  \bar{P}(0,x,\nu) := P_0 \circ X(0,x,\nu)^{-1}
\end{equation}
the distribution of $X(0,x,\nu)$ on $\Omega$ and by
\[
  P(0,x,\nu) := P_0 \circ \big(X(0,x,\nu)^0\big)^{-1}
\]
the distribution of $X(0,x,\nu)^0\equiv X(0,x,\nu)-x$; i.e., the solution which is translated to start at the origin. Note that
$P(0,x,\nu)$ is concentrated on $\Omega^0$ and can therefore be seen as a probability measure on $\Omega^0$.

We shall work under the following nondegeneracy condition.

\begin{assumption}\label{ass:filtrGen}
  Throughout this paper, we assume that
  \begin{equation}\label{eq:filtrGen}
    \overline{\F^X}^{P_0} \supseteq \F \quad \mbox{for all}\quad  X=X(0,x,\nu),
  \end{equation}
  where $\overline{\F^X}^{P_0}$ is the $P_0$-augmentation of the filtration generated by $X$ and $(x,\nu)$ varies over $\R^d\times\cU$.
\end{assumption}

One can construct situations where Assumption~\ref{ass:filtrGen} fails. For example, if $x=0$, $\sigma\equiv1$ and $\mu(r,X,\nu_r)=\nu_r$, then~\eqref{eq:filtrGen} fails for a suitable choice of $\nu$; see, e.g.,~\cite{FeldmanSmorodinsky.97}. The following is a positive result which covers many applications.

\begin{remark}
  {\textrm
  Let $\sigma$ be strictly positive definite and assume that
  $\mu(r,X,\nu_r)$ is a progressively measurable functional of $X$ and $\sigma(r,X,\nu_r)$. Then~\eqref{eq:filtrGen} holds true.

  Note that the latter assumption is satisfied in particular when $\mu$ is uncontrolled; i.e., $\mu(r,\omega,u)=\mu(r,\omega)$.
  }
\end{remark}

\begin{proof}
  Let $X=X(0,x,\nu)$. As the quadratic variation of $X$, the process
  $\int\sigma\sigma^\top(r,X,\nu_r)\,dr$ is adapted to the filtration generated by $X$. In view of our assumptions, it follows that
  \[
    M:=\int\sigma(r,X,\nu_r)\,dB_r  = X-x-\int\mu(r,X,\nu_t)\,dr
  \]
  has the same property. Hence $B=\int \sigma(r,X,\nu_r)^{-1}\,dM_r$ is again adapted to the filtration generated by $X$.
\end{proof}

\begin{remark}
  {\textrm
  For some applications, in particular when the SDE is of geometric form, requiring~\eqref{eq:filtrGen} to hold for all $x\in\R^d$ is too strong. One can instead fix the initial condition $x$ throughout the paper, then it suffices to require~\eqref{eq:filtrGen} only for that $x$.
  }
\end{remark}

Next, we introduce for fixed $t\in[0,T]$ an SDE on $[t,T]\times \Omega^t$ induced by $\mu$ and $\sigma$. Of course, the second argument of $\mu$ and $\sigma$ requires a path on $[0,T]$, so that it is necessary to specify a ``history'' for the SDE on $[0,t]$. This role is played by an arbitrary path $\eta\in\Omega$. Given $\eta$, we define the conditioned coefficients
\begin{align*}
  &\mu^{t,\eta}: [0,T]\times \Omega^t\times U \to \R^d, &\mu^{t,\eta}(r,\omega,u):=\mu(r,\eta\otimes_t \omega,u),\\
  &\sigma^{t,\eta}: [0,T]\times \Omega^t\times U \to \R^{d\times d}, &\sigma^{t,\eta}(r,\omega,u):=\sigma(r,\eta\otimes_t \omega,u).
\end{align*}
(More precisely, these functions are defined also when $\omega$ is a path not necessarily starting at the origin, but clearly their value at $(r,\omega,u)$ depends only on $\omega^t$.)
We observe that the Lipschitz condition~\eqref{eq:LipschitzSigma} is inherited; indeed,
\begin{align*}%
  |\mu^{t,\eta}(r,\omega,u)-\mu^{t,\eta}(r,\omega',u)|+&|\sigma^{t,\eta}(r,\omega,u)-\sigma^{t,\eta}(r,\omega',u)|\\
  & \leq K\|\eta\otimes_t\omega-\eta\otimes_t\omega'\|_{[t,r]}\\
  & = K\|\omega^t-\omega'^t\|_{[t,r]} \\
  & \leq 2 K \|\omega-\omega'\|_{[t,r]}.
\end{align*}
We denote by $\cU^t$ the set of all $\F^t$-progressively measurable, $U$-valued processes $\nu$
such that $\int_t^T |\mu(r,X,\nu_r)|\,dr<\infty$ and $\int_t^T |\sigma(r,X,\nu_r)|^2\,dr<\infty$
 for any continuous $\F^t$-adapted process $X=\{X_r, r\in[0,T]\}$.
For $\nu\in\cU^t$, the SDE
\begin{equation}\label{eq:SDEt}
    X_s=\eta_t+\int_t^s\mu^{t,\eta}(r,X,\nu_r)\,dr+\int_t^s\sigma^{t,\eta}(r,X,\nu_r)\,dB^t_r,\quad t\leq s \leq T \quad\mbox{under }P^t_0
\end{equation}
has a unique solution $X(t,\eta,\nu)$ on $[t,T]$. Similarly as above, we define
\[
  P(t,\eta,\nu) := P^t_0 \circ \big(X(t,\eta,\nu)^t\big)^{-1}
\]
to be the distribution of $X(t,\eta,\nu)^t\equiv X(t,\eta,\nu)-\eta_t$ on $\Omega^t$. Note that this is consistent with the notation $P(0,x,\nu)$ if $x$ is seen as a constant path.

\subsection{The Value Function}

We can now define the value function for the case when the reward functional $\xi$ is an element of
$\UC_b(\Omega)$, the space of bounded uniformly continuous functions on $\Omega$.

\begin{definition}\label{def:valueFct}
  Given $t\in[0,T]$ and $\xi\in\UC_b(\Omega)$, we define the value function
  \begin{equation}\label{eq:defV}
    V_t(\omega)=V_t(\xi;\omega)=\sup_{\nu\in\cU^t} E^{P(t,\omega,\nu)}[\xi^{t,\omega}],\quad (t,\omega)\in [0,T]\times \Omega.
  \end{equation}
\end{definition}

The function $\xi$ is fixed throughout Sections~\ref{se:valueFct} and~\ref{se:dynamicProg} and hence often suppressed in the notation. In view of the double dependence on $\omega$ in~\eqref{eq:defV}, the measurability of $V_t$ is not obvious. We have the following regularity result.

\begin{proposition}\label{pr:valueFunctionCont}
  Let $t\in[0,T]$ and $\xi\in\UC_b(\Omega)$. Then $V_t\in\UC_b(\Omega_t)$ and in particular $V_t$ is $\cF_t$-measurable.
  More precisely,
  \[
    |V_t(\omega)-V_t(\omega')| \leq \rho(\|\omega-\omega'\|_t)\quad \mbox{for all}\quad\omega,\omega'\in \Omega
  \]
  with a modulus of continuity $\rho$ depending only on $\xi$, the Lipschitz constant $K$ and the time horizon $T$.
\end{proposition}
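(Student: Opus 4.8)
The plan is to establish the stated modulus-of-continuity estimate; everything else then follows easily. Boundedness of $V_t$ is immediate, since $|E^{P(t,\omega,\nu)}[\xi^{t,\omega}]|\le\|\xi\|_\infty$ for every $\nu$, so $\|V_t\|_\infty\le\|\xi\|_\infty$. Moreover $\xi^{t,\omega}$, and the conditioned coefficients $\mu^{t,\omega},\sigma^{t,\omega}$, depend only on the restriction $\omega|_{[0,t]}$; hence so does $V_t(\omega)$, so that once the estimate is proved, $V_t$ descends to a bounded uniformly continuous function on $\Omega_t$, which, being a continuous function of $\omega|_{[0,t]}$, is in particular $\cF_t$-measurable.

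The core is an SDE stability estimate. Fix $\omega,\omega'\in\Omega$. Since $\cU^t$ does not depend on $\omega$, the same control $\nu\in\cU^t$ may be used for both histories; write $Y:=X(t,\omega,\nu)$ and $Y':=X(t,\omega',\nu)$, continuous processes on $[t,T]$ under $P^t_0$ with $Y_t=\omega_t$, $Y'_t=\omega'_t$. Subtracting the two copies of \eqref{eq:SDEt} and using $\mu^{t,\omega}(r,Y,\nu_r)=\mu(r,\omega\otimes_t Y^t,\nu_r)$, $\mu^{t,\omega'}(r,Y',\nu_r)=\mu(r,\omega'\otimes_t Y'^t,\nu_r)$ together with the Lipschitz bound \eqref{eq:LipschitzSigma} at the common control value $\nu_r$, the drift integrands differ by at most
\[
 K\,\|\omega\otimes_t Y^t-\omega'\otimes_t Y'^t\|_r\ \le\ K\Big(\|\omega-\omega'\|_t+\sup_{t\le v\le r}|Y_v-Y'_v|\Big),
\]
and likewise for the diffusion integrands. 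Combining this with the Burkholder--Davis--Gundy and Cauchy--Schwarz inequalities and then Gronwall's lemma yields
\[
 E^{P^t_0}\Big[\sup_{t\le s\le T}|Y_s-Y'_s|^2\Big]\ \le\ C\,\|\omega-\omega'\|_t^2,
\]
with $C$ depending only on $K$ and $T$ (uniformly in $t$, since $T-t\le T$, and in $\nu$). The one point requiring care is the a priori integrability needed to apply BDG, since the coefficients are only assumed path-by-path integrable: this is handled by the usual localization along $\tau_n:=\inf\{s\ge t:|Y_s|\vee|Y'_s|\ge n\}\wedge T$, running the estimate up to $\tau_n$ and letting $n\to\infty$ with Fatou, using that $Y,Y'$ have continuous (hence bounded) paths.

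To finish, let $\rho_\xi$ be a concave modulus of continuity of $\xi$. Denote by $\hat Y$ the path in $\Omega$ equal to $\omega$ on $[0,t)$ and to $Y$ on $[t,T]$, and by $\hat Y'$ the analogous path for $\omega'$; then $E^{P(t,\omega,\nu)}[\xi^{t,\omega}]=E^{P^t_0}[\xi(\hat Y)]$ by definition of $P(t,\omega,\nu)$, and $\|\hat Y-\hat Y'\|_T\le\|\omega-\omega'\|_t+\sup_{t\le s\le T}|Y_s-Y'_s|$. Hence, by uniform continuity of $\xi$, Jensen's inequality (concavity of $\rho_\xi$) and the stability estimate,
\[
 \big|E^{P(t,\omega,\nu)}[\xi^{t,\omega}]-E^{P(t,\omega',\nu)}[\xi^{t,\omega'}]\big|\ \le\ E^{P^t_0}\Big[\rho_\xi\Big(\|\omega-\omega'\|_t+\sup_{t\le s\le T}|Y_s-Y'_s|\Big)\Big]\ \le\ \rho_\xi\big((1+\sqrt C)\,\|\omega-\omega'\|_t\big).
\]
This bound is uniform in $\nu\in\cU^t$, so $E^{P(t,\omega,\nu)}[\xi^{t,\omega}]\le V_t(\omega')+\rho_\xi((1+\sqrt C)\|\omega-\omega'\|_t)$ for every $\nu$; taking the supremum over $\nu$ gives $V_t(\omega)\le V_t(\omega')+\rho(\|\omega-\omega'\|_t)$ with $\rho(x):=\rho_\xi((1+\sqrt C)x)$, and the reverse inequality follows by symmetry, proving the claim with a $\rho$ depending only on $\xi$, $K$ and $T$. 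I expect the localization step in the stability estimate to be the only genuinely delicate point; the rest is bookkeeping with the concatenation operation and standard moment estimates.
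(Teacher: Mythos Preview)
Your proof is correct and follows essentially the same strategy as the paper: both rest on the $L^2$ stability estimate $E^{P_0^t}[\|X(t,\omega,\nu)-X(t,\omega',\nu)\|_{[t,T]}^2]\le C\|\omega-\omega'\|_t^2$ obtained via BDG/Gronwall with localization, combined with Jensen's inequality applied to a concave modulus of continuity for $\xi$, and then a supremum over $\nu\in\cU^t$. The only organizational difference is that the paper first isolates this stability estimate as a separate lemma (Lemma~\ref{le:weakContinuity}) and then disentangles the two $\omega$-dependencies in $E^{P(t,\omega,\nu)}[\xi^{t,\omega}]$---one through the measure, one through the integrand---bounding them separately to obtain $\rho=\rho^{(\xi)}+\rho_{T,K,\xi}$; you instead compare the full concatenated paths $\hat Y=\omega\otimes_t Y^t$ and $\hat Y'=\omega'\otimes_t Y'^t$ in one stroke, arriving at $\rho(\cdot)=\rho_\xi((1+\sqrt C)\,\cdot)$. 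Your route is marginally more direct; the paper's separation has the advantage that Lemma~\ref{le:weakContinuity} is reusable later (notably in the proof of the dynamic programming principle, Theorem~\ref{thm:DPP}).
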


The first source of regularity for $V_t$ is our assumption that $\xi$ is uniformly continuous; the second one is the Lipschitz property of the SDE. Before stating the proof of the proposition,  we examine the latter aspect in detail.

\begin{lemma}\label{le:weakContinuity}
  Let $\psi\in\UC_b(\Omega^t)$. There exists a modulus of continuity $\rho_{K,T,\psi}$,
  depending only on $K,T$ and the minimal modulus of continuity of $\psi$, such that
  \[
    \big|E^{P(t,\omega,\nu)}[\psi]-E^{P(t,\omega',\nu)}[\psi]\big| \leq \rho_{K,T,\psi}\big(\|\omega-\omega'\|_t\big)
  \]
  for all $t\in[0,T]$, $\nu\in\cU^t$ and $\omega,\omega'\in\Omega$.
\end{lemma}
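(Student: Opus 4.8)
The plan is to reduce the claim to a classical stability estimate for the SDE~\eqref{eq:SDEt} and then invoke the uniform continuity of $\psi$. Since $P(t,\eta,\nu)$ is by construction the image of $P^t_0$ under the $\Omega^t$-valued map $X(t,\eta,\nu)^t$, we have $E^{P(t,\omega,\nu)}[\psi]=E^{P^t_0}\big[\psi\big(X(t,\omega,\nu)^t\big)\big]$, and likewise for $\omega'$. Both $X(t,\omega,\nu)$ and $X(t,\omega',\nu)$ solve~\eqref{eq:SDEt} on $[t,T]$ under $P^t_0$ with the \emph{same} control $\nu$ and the \emph{same} driving process $B^t$; only the ``history'' $\eta$ entering the coefficients differs ($\omega$ versus $\omega'$). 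Hence it suffices to bound the difference process in $L^2(P^t_0)$ by $\delta:=\|\omega-\omega'\|_t$, uniformly in $t$ and $\nu$.

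Write $X:=X(t,\omega,\nu)$, $X':=X(t,\omega',\nu)$ and $\Delta:=X-X'$. Subtracting the two copies of~\eqref{eq:SDEt},
\begin{align*}
  \Delta_s=(\omega_t-\omega'_t)
  &+\int_t^s\big[\mu^{t,\omega}(r,X,\nu_r)-\mu^{t,\omega'}(r,X',\nu_r)\big]\,dr\\
  &+\int_t^s\big[\sigma^{t,\omega}(r,X,\nu_r)-\sigma^{t,\omega'}(r,X',\nu_r)\big]\,dB^t_r,\qquad t\leq s\leq T.
\end{align*}
Now $\mu^{t,\omega}(r,X,\nu_r)=\mu(r,\omega\otimes_t X^t,\nu_r)$, and the two glued paths $\omega\otimes_t X^t$ and $\omega'\otimes_t (X')^t$ differ by at most $\delta$ on $[0,t)$ and by exactly $\Delta$ on $[t,r]$ (note $\Delta_t=\omega_t-\omega'_t$, so $|\Delta_t|\le\delta$ as well); hence by the Lipschitz condition~\eqref{eq:LipschitzSigma} the drift increment is bounded by $K\big(\delta+\|\Delta\|_{[t,r]}\big)$, and similarly for $\sigma$. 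The point of working with the \emph{difference} is that no ``free'' term $\mu^{t,\eta}(r,0,\nu_r)$ survives, so the estimate needs only the Lipschitz constant and nothing beyond the integrability already built into $\cU^t$. A routine Burkholder--Davis--Gundy and Gronwall argument --- applied along the localizing stopping times $\tau_n:=\inf\{s\geq t:\|\Delta\|_{[t,s]}\geq n\}$ to avoid presupposing $L^2$-bounds on $X$ itself, followed by Fatou's lemma --- then yields
\[
  E^{P^t_0}\big[\|\Delta\|_{[t,T]}^2\big]\leq C\delta^2,
\]
with $C=C(K,T)$ independent of $t,\omega,\omega'$ and $\nu$ (the Gronwall factor is at most $e^{C(T-t)}\leq e^{CT}$, and the Lipschitz constant is uniform). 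Since $X(t,\omega,\nu)^t_s-X(t,\omega',\nu)^t_s=\Delta_s-(\omega_t-\omega'_t)$, this gives $E^{P^t_0}\big[\|X(t,\omega,\nu)^t-X(t,\omega',\nu)^t\|_{[t,T]}^2\big]\leq C'\delta^2$ with $C'=C'(K,T)$.

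Finally, let $\rho_\psi$ denote the minimal modulus of continuity of $\psi$, so $\rho_\psi\leq 2\|\psi\|_\infty$ and for every $\eps>0$ one has $|\psi(a)-\psi(b)|\leq\rho_\psi(\eps)+2\|\psi\|_\infty\1_{\{\|a-b\|>\eps\}}$. Combining this with the previous bound and Chebyshev's inequality,
\[
  \big|E^{P(t,\omega,\nu)}[\psi]-E^{P(t,\omega',\nu)}[\psi]\big|
  \leq E^{P^t_0}\big[|\psi(X^t)-\psi((X')^t)|\big]
  \leq\rho_\psi(\eps)+\frac{2\|\psi\|_\infty C'\delta^2}{\eps^2}.
\]
Taking the infimum over $\eps>0$ produces a function $\rho_{K,T,\psi}$ that is nondecreasing, vanishes at $0+$, and depends only on $K$, $T$ and $\rho_\psi$; it is the asserted modulus of continuity. (Alternatively, pass to the least concave majorant of $\rho_\psi$ and combine Jensen's inequality with $E[Z]\leq(E[Z^2])^{1/2}$.) I expect the only genuine subtlety to be bookkeeping: checking that the Gronwall constant is truly uniform over $t\in[0,T]$ and over $\nu\in\cU^t$ --- which holds because its sole input is the $t$- and $\nu$-independent constant $K$ --- and that the localization is actually necessary, since under the path-by-path hypotheses on $\cU^t$ the solution $X(t,\omega,\nu)$ need not itself be $L^2$-bounded.
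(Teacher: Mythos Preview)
Your argument is correct and follows essentially the same route as the paper: a Gronwall-type $L^2$ stability estimate for the SDE (with localization, since only path-by-path integrability is assumed), followed by a passage from the $L^2$ bound to the modulus of continuity of $\psi$. The paper works with the shifted difference $X^t-\bar X^t$ and localizes the individual semimartingale parts, whereas you work with $\Delta=X-X'$ and localize $\Delta$; these are equivalent bookkeeping choices. For the second step the paper takes exactly the alternative you mention parenthetically (concave hull of $\rho_\psi$ plus Jensen); your Chebyshev variant is also fine, though note that as written it introduces $\|\psi\|_\infty$, which is not determined by $\rho_\psi$ --- replace $2\|\psi\|_\infty$ by $\sup_z\rho_\psi(z)$ to match the stated dependence.
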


\begin{proof}
  We set $E[\,\cdot\,]:=E^{P_0^t}[\,\cdot\,]$ to alleviate the notation. Let $\omega,\bomega\in\Omega$, set $X:=X(t,\omega,\nu)$ and
  $\bar{X}:=X(t,\bomega,\nu)$, and recall that $X^t=X-X_t=X-\omega_t$ and similarly $\bar{X}^t=\bar{X}-\bomega_t$.

  (i) We begin with a standard SDE estimate. Let
  $X^t=M^t+A^t$ and $\bar{X}^t=\bar{M}^t+\bar{A}^t$ be the semimartingale decompositions and
  $t\leq \tau \leq T$ be a stopping time such that $M^t,\bar{M}^t,A^t,\bar{A}^t$ are bounded on $[t,\tau]$. Then It\^o's formula and the Lipschitz property~\eqref{eq:LipschitzSigma} of $\sigma$ yield that
  \begin{align*}
    E[|M^t_\tau-\bar{M}^t_\tau|^2]
    & \leq E\int_t^\tau |\sigma(r,\omega\otimes_t X,\nu_r)-\sigma(r,\bomega\otimes_t \bar{X},\nu_r)|^2\,dr \\
    &\leq K^2 E\int_t^\tau \|\omega\otimes_t X - \bomega\otimes_t \bar{X}\|_r^2\,dr \\
    &\leq K^2 E\int_t^\tau \big(\|\omega-\bomega\|_t + \|X^t-\bar{X}^t\|_{[t,r]}\big)^2\,dr \\
    &\leq 2K^2T \|\omega-\bomega\|_t^2 + 2 K^2 \int_t^T E\big[\|X^t-\bar{X}^t\|_{[t,r\wedge \tau]}^2\big]\,dr.
  \end{align*}
  Hence, Doob's maximal inequality implies that
  \begin{align*}
     E\big[\|M^t-\bar{M}^t\|_{[t,\tau]}^2\big]
    & \leq 4 E[|M^t_\tau-\bar{M}^t_\tau|^2] \\
    &\leq 8K^2T \|\omega-\bomega\|_t^2 + 8 K^2 \int_t^T E \big[\|X^t-\bar{X}^t\|_{[t,r\wedge \tau]}^2\big]\,dr.
  \end{align*}
  Moreover, using the Lipschitz property~\eqref{eq:LipschitzSigma} of $\mu$, we also have that
  \begin{align*}
    |A^t_s-\bar{A}^t_s|
    &\leq \int_t^s |\mu(r,\omega\otimes_t X,\nu_r)-\mu(r,\bomega\otimes_t \bar{X},\nu_r)|\,dr \\
    &\leq K \int_t^s \big(\|\omega-\bomega\|_t + \|X^t-\bar{X}^t\|_{[t,r]}\big)\,dr
  \end{align*}
  for all $t\leq s\leq T$ and then Jensen's inequality yields that
  \[
    E\big[\|A^t-\bar{A}^t\|_{[t,\tau]}^2\big] \leq 2K^2 T^2 \|\omega-\bomega\|_t^2 + 2 K^2T \int_t^T E\big[\|X^t-\bar{X}^t\|_{[t,r\wedge \tau]}^2\big]\,dr.
  \]
  Hence, we have shown that
  \begin{align*}
     E\big[\|X^t-\bar{X}^t\|_{[t,\tau]}^2\big]
    &\leq C_0 \|\omega-\bomega\|_t^2 + C_0 \int_t^T E \big[\|X^t-\bar{X}^t\|_{[t,r\wedge \tau]}^2\big]\,dr,
  \end{align*}
  where $C_0$ depends only on $K$ and $T$,
  and we conclude by Gronwall's lemma that
  \[
    E\big[\|X^t-\bar{X}^t\|_{[t,\tau]}^2\big] \leq C  \|\omega-\bomega\|_t^2,\quad C:=C_0 e^{C_0T}.
  \]
  By the continuity of their sample paths, there exists a localizing sequence $(\tau_n)_{n\geq1}$ of stopping times such that $M^t,\bar{M}^t,A^t,\bar{A}^t$ are bounded on $[t,\tau_n]$ for each $n$.
  Therefore, monotone convergence and the previous inequality yield that
  \begin{equation}\label{eq:Gronwall}
    E\big[\|X^t-\bar{X}^t\|_{[t,T]}^2\big] \leq C  \|\omega-\bomega\|_t^2.
  \end{equation}

  (ii) Let $\tilde{\rho}$ be the minimal (nondecreasing) modulus of continuity for $\psi$,
  \[
    \tilde{\rho}(z):=\sup \big\{|\psi(\tomega)-\psi(\tomega')|:\;\tomega,\tomega'\in \Omega^t,\, \|\tomega-\tomega'\|_{[t,T]}\leq z\big\},
  \]
  and let $\rho$ be the concave hull of $\tilde{\rho}$.
  Then $\rho$ is a bounded continuous function satisfying $\rho(0)=0$ and $\rho\geq\tilde{\rho}$.
  Let $P:=P(t,\omega,\nu)$ and $\bar{P}:=P(t,\bomega,\nu)$, then $P$ and $\bar{P}$ are the distributions of $X^t$ and $\bar{X}^t$, respectively; therefore,
  \begin{equation}\label{eq:proofWeakCont}
    \big|E^P[\psi]-E^{\bar{P}}[\psi]\big|
       = \big| E[\psi(X^t) - \psi(\bar{X}^t)]\big|
       \leq E\big[\rho\big(\|X^t-\bar{X}^t\|_{[t,T]}\big)\big].
  \end{equation}
  Moreover, Jensen's inequality and~\eqref{eq:Gronwall} yield that
  \begin{align*}
    E\big[\rho\big(\|X^t-\bar{X}^t\|_{[t,T]}\big)\big]
        &\leq \rho \big(E\big[\|X^t-\bar{X}^t\|_{[t,T]}\big]\big) \\
        &\leq \rho \big(E\big[\|X^t-\bar{X}^t\|^2_{[t,T]}\big]^{1/2}\big)\\
        &\leq \rho \big(\sqrt{C}\|\omega-\bomega\|_t\big)
  \end{align*}
  for every $n$. In view of~\eqref{eq:proofWeakCont}, we have $|E^P[\psi]-E^{\bar{P}}[\psi]| \leq  \rho(\sqrt{C}\|\omega-\bomega\|_t)$; i.e., the result holds for $\rho_{K,T,\psi}(z):=\rho(\sqrt{C}z)$.
\end{proof}

After these preparations, we can prove the continuity of $V_t$.

\begin{proof}[Proof of Proposition~\ref{pr:valueFunctionCont}]
  To disentangle the double dependence on $\omega$ in~\eqref{eq:defV}, we first consider the function
  \[%
    (\eta,\omega)\mapsto E^{P(t,\eta,\nu)}[\xi^{t,\omega}],\quad (\eta,\omega)\in \Omega\times \Omega.
  \]
  Since $\xi\in\UC_b(\Omega)$, there exists a modulus of continuity $\rho^{(\xi)}$ for $\xi$; i.e.,
  \begin{equation*}%
    |\xi(\omega)-\xi(\omega')|\leq \rho^{(\xi)}(\|\omega-\omega'\|_T),\quad \omega,\omega'\in \Omega.
  \end{equation*}
  Therefore, we have for all $\tomega\in\Omega^t$ that
  \begin{align}\label{eq:modulusXi}
    |\xi^{t,\omega}(\tomega)-\xi^{t,\omega'}(\tomega)|
    &= |\xi(\omega\otimes_t\tomega)-\xi(\omega'\otimes_t\tomega)|\nonumber\\
    &\leq \rho^{(\xi)}(\|\omega\otimes_t\tomega-\omega'\otimes_t\tomega\|_T)\nonumber\\
    &= \rho^{(\xi)}(\|\omega-\omega'\|_t).
  \end{align}
  For fixed $\eta\in\Omega$, it follows that
  \begin{equation}\label{eq:omegaContVhat}
    \big|E^{P(t,\eta,\nu)}[\xi^{t,\omega}]-E^{P(t,\eta,\nu)}[\xi^{t,\omega'}]\big|
    \leq \rho^{(\xi)}(\|\omega-\omega'\|_t).
  \end{equation}

  Fix $\omega\in\Omega$ and let $\psi_\omega:=\xi^{t,\omega}$. Then $\rho^{(\xi)}$ yields a modulus of continuity for $\psi_\omega$; in particular, this modulus of continuity is uniform in $\omega$. Thus Lemma~\ref{le:weakContinuity} implies that the mapping $\eta\mapsto E^{P(t,\eta,\nu)}[\psi_\omega]$ admits a modulus of continuity $\rho_{T,K,\xi}$ depending only on $T,K,\xi$. In view of~\eqref{eq:omegaContVhat}, we conclude that
  \begin{equation*}%
    \big|E^{P(t,\eta,\nu)}[\xi^{t,\omega}]-E^{P(t,\eta',\nu)}[\xi^{t,\omega'}]\big| \leq \rho^{(\xi)}(\|\omega-\omega'\|_t) + \rho_{T,K,\xi}(\|\eta-\eta'\|_t)
  \end{equation*}
  for all $\eta,\eta',\omega,\omega'\in \Omega$ and in particular that
  \begin{equation}\label{eq:rewardUC}
    \big|E^{P(t,\omega,\nu)}[\xi^{t,\omega}]-E^{P(t,\omega',\nu)}[\xi^{t,\omega'}]\big| \leq \rho(\|\omega-\omega'\|_t),\quad \rho:=\rho^{(\xi)} + \rho_{T,K,\xi}
  \end{equation}
  for all $\omega,\omega'\in \Omega$. Passing to the supremum over $\nu\in\cU^t$, we obtain that
  $|V_t(\omega)-V_t(\omega')| \leq \rho(\|\omega-\omega'\|_t)$ for all $\omega,\omega'\in \Omega$,
  which was the claim.
\end{proof}

\section{Pathwise Dynamic Programming}\label{se:dynamicProg}

In this section, we provide a pathwise dynamic programming principle which is fundamental for the subsequent sections.
As we are working in the weak formulation~\eqref{eq:valueCanonIntro}, the arguments used here are similar to, e.g., \cite{SonerTouziZhang.2010dual}, while
\cite{Peng.04} gives a related construction in the strong formulation (i.e., working only under $P_0$).

We assume in this section that the following conditional version of Assumption~\ref{ass:filtrGen} holds true; however, we shall see later (Lemma~\ref{le:filtrGenFollows}) that this extended assumption holds automatically outside certain nullsets.

\begin{assumption}\label{ass:filtrGenCond}
  Throughout Section~\ref{se:dynamicProg}, we assume that
  \begin{equation}\label{eq:filtrGenCond}
    \overline{\F^X}^{P^t_0} \supseteq \F^t\quad \mbox{for}\quad  X:=X(t,\eta,\nu),
  \end{equation}
  for all $(t,\eta,\nu)\in [0,T]\times\Omega\times\cU^t$.
\end{assumption}

The main result of this section is the following dynamic programming principle. We shall also provide more general, quasi-sure versions of this result later (the final form being Theorem~\ref{th:DPPstop}).

\begin{theorem}\label{thm:DPP}
  Let $0\leq s\leq t\leq T$, $\xi\in\UC_b(\Omega)$ and set $V_r(\cdot)=V_r(\xi;\cdot)$. Then
  \begin{equation}\label{eq:DPP}
    V_s(\omega) = \sup_{\nu\in\cU^s} E^{P(s,\omega,\nu)}\big[ (V_t)^{s,\omega} \big]\quad \mbox{for all}\quad \omega\in\Omega.
  \end{equation}
\end{theorem}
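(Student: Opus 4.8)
The plan is to prove the two inequalities in~\eqref{eq:DPP} separately, using regular conditional probability distributions to pass between the measures $P(s,\omega,\nu)$ and the ``concatenated'' measures that appear when one splits a control at time $t$. The essential bookkeeping fact I would establish first is a concatenation/flow property for the controlled SDE: if $\nu^1\in\cU^s$ is used on $[s,t]$ and, for each relevant history $\tomega$, a control $\nu^2\in\cU^t$ is used on $[t,T]$, then the measure obtained by gluing $P(s,\omega,\nu^1)$ on $\cF^s_t$ with the kernel $\tomega\mapsto P(t,\omega\otimes_s\tomega,\nu^2)$ on $\cF^t_T$ is again of the form $P(s,\omega,\nu)$ for an admissible $\nu\in\cU^s$ (namely $\nu=\nu^1\1_{[s,t)}+\nu^2\1_{[t,T]}$ with $\nu^2$ read as an $\F^t$-progressive functional of the path). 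This is where Assumption~\ref{ass:filtrGenCond} enters: to express $E^{P(s,\omega,\nu)}[\cdot|\cF^s_t]$ at the point $\tomega$ as an expectation under $P(t,\omega\otimes_s\tomega,\nu)$, one needs the conditioned SDE driven by the conditioned Brownian motion to still generate the right filtration, so that the strong solution on $[t,T]$ starting from the history $\omega\otimes_s\tomega$ has distribution $P(t,\omega\otimes_s\tomega,\nu)$. I would record this as the key lemma and then the two inequalities follow fairly mechanically.

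For the inequality $V_s(\omega)\le \sup_{\nu}E^{P(s,\omega,\nu)}[(V_t)^{s,\omega}]$: fix $\nu\in\cU^s$ and $\eps>0$. Using the tower property, $E^{P(s,\omega,\nu)}[\xi^{s,\omega}] = E^{P(s,\omega,\nu)}\big[E^{P(s,\omega,\nu)}[\xi^{s,\omega}|\cF^s_t]\big]$, and by the concatenation lemma the inner conditional expectation at $\tomega$ equals $E^{P(t,\omega\otimes_s\tomega,\nu)}[\xi^{t,\omega\otimes_s\tomega}]$ (using $\xi^{s,\omega}$ restricted after time $t$ and history $\omega\otimes_s\tomega$), which is $\le V_t(\omega\otimes_s\tomega) = (V_t)^{s,\omega}(\tomega)$ by the very definition of $V_t$. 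Taking expectations and then the supremum over $\nu$ gives the bound. Here I would be slightly careful that $\xi^{s,\omega}$ conditioned at $t$ with history $\tomega$ genuinely coincides with $\xi^{t,\omega\otimes_s\tomega}$, which is just associativity of concatenation: $\omega\otimes_s(\tomega\otimes_t\tilde\eta)=(\omega\otimes_s\tomega)\otimes_t\tilde\eta$ in the appropriate sense.

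For the reverse inequality $V_s(\omega)\ge \sup_{\nu}E^{P(s,\omega,\nu)}[(V_t)^{s,\omega}]$: fix $\nu^1\in\cU^s$ and $\eps>0$. Because $V_t\in\UC_b(\Omega_t)$ by Proposition~\ref{pr:valueFunctionCont} and $E^{P(t,\eta,\nu)}[\psi]$ is (uniformly in $\nu$) continuous in $\eta$ by Lemma~\ref{le:weakContinuity}, a measurable selection / partition-of-the-path-space argument lets me pick a single $\F^t$-progressive control $\nu^2$ (piecewise constant across finitely many balls covering the relevant compact set of histories, say, up to the usual truncation) such that $E^{P(t,\omega\otimes_s\tomega,\nu^2)}[\xi^{t,\omega\otimes_s\tomega}] \ge V_t(\omega\otimes_s\tomega) - \eps$ for all $\tomega$; uniform continuity is exactly what makes finitely many pieces suffice. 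Concatenating $\nu^1$ and this $\nu^2$ produces an admissible $\nu\in\cU^s$, and running the tower property backwards,
\[
  V_s(\omega)\ \ge\ E^{P(s,\omega,\nu)}[\xi^{s,\omega}]\ =\ E^{P(s,\omega,\nu)}\big[E^{P(t,\omega\otimes_s\cdot,\nu^2)}[\xi^{t,\omega\otimes_s\cdot}]\big]\ \ge\ E^{P(s,\omega,\nu^1)}\big[(V_t)^{s,\omega}\big]-\eps,
\]
using that only the $[s,t]$-part of $\nu$ (namely $\nu^1$) determines the outer marginal. Letting $\eps\downarrow0$ and taking the supremum over $\nu^1$ finishes the proof.

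The main obstacle is the reverse inequality, and specifically the measurable pasting of near-optimal controls: one must produce a genuinely $\F^t$-progressively measurable process $\nu^2$ (not merely an $\tomega$-indexed family) that is $\eps$-optimal for every history simultaneously, and then verify that the concatenation with $\nu^1$ lands back in $\cU^s$ and induces the glued measure. The uniform modulus of continuity of $V_t$ and the uniform (in $\nu$) continuity estimate of Lemma~\ref{le:weakContinuity} are the tools that make a crude finite-covering selection work, avoiding heavier measurable-selection machinery; the technical price is the truncation needed because $\Omega_t$ is not compact, handled by first working on a large ball where $\xi$ and hence $V_t$ are controlled and absorbing the tail into $\eps$ via boundedness of $\xi$.
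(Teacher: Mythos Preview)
Your approach is essentially the same as the paper's: both prove the two inequalities separately, use the fact that the conditioned law $P(s,\omega,\nu)^{t,\tomega}$ lies in $\cP(t,\omega\otimes_s\tomega)$ for ``$\le$'', and use a pasting of $\eps$-optimal controls over a covering of the history space together with the uniform continuity of $V_t$ and of $\eta\mapsto E^{P(t,\eta,\nu)}[\xi^{t,\eta}]$ for ``$\ge$''. The paper packages your ``concatenation lemma'' as two results, Lemma~\ref{le:shiftedMeasure} (the conditional law is in the right class) and Lemma~\ref{le:pasting} (the glued control is admissible and its conditional laws are as claimed), and uses a countable Lindel\"of cover followed by truncation rather than your finite cover of a compact set; both routes handle the tail via boundedness of $\xi$.

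Two small imprecisions are worth flagging. First, the object you call ``a single $\F^t$-progressive control $\nu^2$, piecewise constant across balls of histories'' cannot be $\F^t$-progressive, since it depends on the path on $[s,t]$; what you actually build is the $[t,T]$-part of a control $\bar\nu\in\cU^s$, and only after conditioning at a fixed $\tomega$ does it become an element of $\cU^t$ (this is exactly how the paper states Lemma~\ref{le:pasting}). Second, the claim $E^{P(t,\omega\otimes_s\tomega,\nu^2)}[\xi^{t,\omega\otimes_s\tomega}]\ge V_t(\omega\otimes_s\tomega)-\eps$ for \emph{all} $\tomega$ does not hold with error exactly $\eps$: since $\nu^i$ is $\eps$-optimal only at the ball center $\hat\omega^i$, moving to general $\tomega$ in the ball costs an extra modulus-of-continuity term from both $V_t$ and the reward map (this is the content of~\eqref{eq:proofDPP1}--\eqref{eq:proofDPP2} in the paper, and the point emphasized in Remark~\ref{rk:pastingMeasures}). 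With these adjustments your argument goes through.
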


We remark that in view of Proposition~\ref{pr:valueFunctionCont}, we may see $\xi\mapsto V_r(\xi;\cdot)$ as a mapping $\UC_b(\Omega)\to \UC_b(\Omega)$ and recast~\eqref{eq:DPP} as the semigroup property
\begin{equation}\label{eq:DPPSemiGrp}
  V_s=V_s\circ V_t\quad\mbox{on}\quad \UC_b(\Omega)\quad\mbox{for all }\quad 0\leq s\leq t\leq T.
\end{equation}

Some auxiliary results are needed for the proof of Theorem~\ref{thm:DPP}, which is stated at the end of this section. We start with the (well known) observation that conditioning the solution of an SDE yields the solution of a suitably conditioned SDE.

\begin{lemma}\label{le:conditioningX}
  Let $0\leq s\leq t\leq T$, $\nu\in\cU^s$ and $\bomega\in\Omega$. If $\bar{X}:=X(s,\bomega,\nu)$, then
  \begin{equation*}%
    \bar{X}^{t,\omega} = X\big(t,\bomega\otimes_s \bar{X}(\omega),\nu^{t,\omega}\big) \quad P_0^t\as
  \end{equation*}
  for all $\omega\in\Omega^s$.
\end{lemma}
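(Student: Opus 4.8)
The plan is to verify the claimed identity by checking that the right-hand side $X\bigl(t,\bomega\otimes_s \bar X(\omega),\nu^{t,\omega}\bigr)$ solves, $P_0^t$-a.s., exactly the SDE~\eqref{eq:SDEt} on $[t,T]$ that defines the left-hand side $\bar X^{t,\omega}$, and then invoking pathwise uniqueness of that SDE. So the first step is to write down the SDE satisfied by $\bar X = X(s,\bomega,\nu)$ under $P_0^s$ and then condition it: using the regular conditional probability $(P_0^s)^\omega_t$, the stochastic integral $\int_s^{\cdot}\sigma^{s,\bomega}(r,\bar X,\nu_r)\,dB^s_r$ splits at time $t$, and the part on $[t,T]$ becomes, under the conditioned measure, a stochastic integral against the shifted Brownian motion $B^t$. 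This is the ``conditioning an SDE'' folklore; the mild technical point is to argue that $(P_0^s)$ conditioned at $t$ on the event $\{B^s = \omega \text{ on } [s,t]\}$ gives back $P_0^t$ on the increments after $t$ (independence and stationarity of Brownian increments), and that stochastic integrals are preserved under this conditioning in the a.s.\ sense.

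Next I would carefully match the coefficients. On $[t,T]$ the conditioned process $\bar X^{t,\omega}$ starts (at time $t$) from the value $\bar X_t(\omega) = (\bomega\otimes_s \bar X(\omega))_t$ — note that $\bar X(\omega)$ here means the path $r\mapsto \bar X_r(\omega)$ evaluated along the realized history $\omega\in\Omega^s$ up to time $t$ — and it is driven by the conditioned coefficients. The key algebraic identity is that concatenation is associative in the relevant sense: $\bomega \otimes_s \bar X$ restricted appropriately and then conditioned at $t$ agrees with $(\bomega\otimes_s \bar X(\omega))\otimes_t (\,\cdot\,)$, so that $(\sigma^{s,\bomega})^{t,\omega} = \sigma^{t,\,\bomega\otimes_s \bar X(\omega)}$ and likewise for $\mu$. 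Once this is established, the conditioned SDE for $\bar X^{t,\omega}$ reads precisely
\[
  X_u = \bigl(\bomega\otimes_s \bar X(\omega)\bigr)_t + \int_t^u \mu^{t,\eta}(r,X,\nu^{t,\omega}_r)\,dr + \int_t^u \sigma^{t,\eta}(r,X,\nu^{t,\omega}_r)\,dB^t_r
\]
with $\eta = \bomega\otimes_s \bar X(\omega)$, under $P_0^t$, which is exactly the defining equation of $X(t,\eta,\nu^{t,\omega})$. One also checks that $\nu^{t,\omega}\in\cU^t$, which follows from the conditioned integrability conditions, using that $\nu\in\cU^s$ and that the integrability in~\eqref{eq:sigmaIntegrable} holds path-by-path.

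I expect the main obstacle to be the rigorous handling of the ``for all $\omega\in\Omega^s$'', $P_0^t$-a.s.\ statement: one must fix $\omega$ outside the relevant $(P_0^s)$-nullset where the regular conditional probability behaves well and where the stochastic-integral identity holds, and argue that after conditioning the residual randomness is genuinely governed by $P_0^t$. This is where one leans on the structure of the Wiener measure (so that the conditional law of the post-$t$ increments is literally $P_0^t$, not merely equivalent to it) and on the measurability properties of regular conditional probabilities recalled in the Notation subsection, together with the fact that $\tomega\mapsto \xi^{t,\omega}(\tomega)$-type conditioning is compatible with progressive measurability, as noted there for progressively measurable processes. The rest — Gronwall-type uniqueness for~\eqref{eq:SDEt}, which is in force by the inherited Lipschitz bound, and the bookkeeping of concatenations — is routine. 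I would present the proof by first recording the concatenation identity as an elementary observation, then doing the conditioning of the drift term (trivial, pathwise) and of the martingale term (the one real point), and finally quoting uniqueness.
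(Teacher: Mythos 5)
Your proposal follows essentially the same route as the paper's proof: write the SDE for $\bar X$ on $[t,T]$ under $P_0^s$, condition it using $(P_0^s)^{t,\omega}=P_0^t$ (independence of Brownian increments), identify the conditioned coefficients via the concatenation identity $\bomega\otimes_s\bar X^{t,\omega}=\bigl(\bomega\otimes_s\bar X(\omega)\bigr)\otimes_t\bar X^{t,\omega}$, and conclude by uniqueness of the solution to~\eqref{eq:SDEt}. The technical points you flag (preservation of stochastic integrals under conditioning, the choice of $\omega$ outside a nullset) are exactly the ones the paper handles, so the argument is correct as outlined.
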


\begin{proof}
  Let $\omega\in\Omega^s$. Using the definition and the flow property of $\bar{X}$, we have
  \begin{align*}
    \bar{X}_r
     &= \bomega_s+ \int_s^r \mu^{s,\bomega}(u,\bar{X},\nu_u)\,du+ \int_s^r \sigma^{s,\bomega}(u,\bar{X},\nu_u)\,dB^s_u\\
     &= \bar{X}_t+ \int_t^r \mu(u,\bomega\otimes_s\bar{X},\nu_u)\,du+ \int_t^r \sigma(u,\bomega\otimes_s\bar{X},\nu_u)\,dB^s_u\quad P^s_0\as
  \end{align*}
  for all $r\in[t,T]$. Hence, using that $(P_0^s)^{t,\omega}=P_0^t$ by the $P_0^s$-independence of the increments of $B^s$,
  \begin{equation}\label{eq:proofConditioningX}
    \bar{X}_r^{t,\omega} = \bar{X}^{t,\omega}_t +\int_t^r \!\mu(u,\bomega\otimes_s \bar{X}^{t,\omega},\nu^{t,\omega}_u)\,du + \int_t^r \!\sigma(u,\bomega\otimes_s \bar{X}^{t,\omega},\nu^{t,\omega}_u)\,dB_u^t\quad\! P_0^t\as
  \end{equation}
  Since $\bar{X}$ is adapted, we have $\bar{X}^{t,\omega}(\cdot)=\bar{X}(\omega\otimes_t\cdot)=\bar{X}(\omega)$ on $[s,t]$ and in particular
  \[
    \bomega \otimes_s \bar{X}^{t,\omega}=\bomega \otimes_s \bar{X}(\omega) \otimes _t \bar{X}^{t,\omega} = \eta\otimes_t \bar{X}^{t,\omega},\quad\mbox{for}\quad \eta:=\bomega\otimes_s \bar{X}(\omega).
  \]
  Therefore, recalling that $\bar{X}_s=\bomega_s$,~\eqref{eq:proofConditioningX} can be stated as
  \[
    \bar{X}_r^{t,\omega} = \eta_t + \int_t^r \mu^{t,\eta}(u,\bar{X}^{t,\omega},\nu^{t,\omega})\,du+ \int_t^r \sigma^{t,\eta}(u,\bar{X}^{t,\omega},\nu^{t,\omega})\,dB_u^t\quad P_0^t\as;
  \]
  i.e., $\bar{X}^{t,\omega}$ solves the SDE~\eqref{eq:SDEt} for the parameters $(t,\eta,\nu^{t,\omega})$. Now the result follows by the uniqueness of the
  solution to this SDE.
\end{proof}

Given $t\in[0,T]$ and $\omega\in\Omega$, we define
\begin{equation}\label{eq:defMeasSet}
  \cP(t,\omega)=\big\{P(t,\omega,\nu):\, \nu\in\cU^t \big\}.
\end{equation}
These sets have the following invariance property.

\begin{lemma}\label{le:shiftedMeasure}
  Let $0\leq s\leq t\leq T$ and $\bomega\in\Omega$. If $P\in \cP(s,\bomega)$, then
  \[
    P^{t,\omega} \in\cP(t,\bomega\otimes_s\omega)\quad \mbox{for}\quad P\ae\;\omega\in\Omega^s.
  \]
\end{lemma}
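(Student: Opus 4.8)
The plan is to express $P \in \cP(s,\bomega)$ as $P = P(s,\bomega,\nu)$ for some $\nu \in \cU^s$, set $\bar X := X(s,\bomega,\nu)$, and then apply Lemma~\ref{le:conditioningX} to identify the conditioned process $\bar X^{t,\omega}$ as the solution of the $(t,\eta,\nu^{t,\omega})$-SDE for $\eta := \bomega \otimes_s \bar X(\omega)$. The key point is that pushing $P_0^s$ forward through conditioning at $t$ gives $P_0^t$ (used already in the proof of Lemma~\ref{le:conditioningX}), so that the distribution of $\bar X^{t,\omega} - \eta_t$ under $P_0^t$ is exactly $P(t,\eta,\nu^{t,\omega})$. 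Matching this against the definition $P^{t,\omega}(A) = P^\omega_s(\bomega \otimes_s A)$ and the pushforward structure of $P = P_0^s \circ (\bar X^s)^{-1}$ should give $P^{t,\omega} \in \cP(t,\eta)$, and since $\eta$ and $\bomega \otimes_s \omega$ agree on $[0,t]$ (because $\bar X(\omega)$ is the value on $[s,t]$ determined by $\omega$, but actually we need $\eta_r = \bomega_r$ for $r \le s$ and $\eta_r = \bar X_r(\omega)$ for $s \le r \le t$), the coefficients $\mu^{t,\eta}, \sigma^{t,\eta}$ depend only on this restriction, so $\cP(t,\eta) = \cP(t,\bomega\otimes_s\omega)$ — wait, this requires $\eta|_{[0,t]} = (\bomega\otimes_s\omega)|_{[0,t]}$, which holds only for those $\omega$ with $\bar X(\omega) = \omega$-determined appropriately; this is where the "$P$-a.e.\ $\omega$" enters.

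More carefully, here are the steps I would carry out. First, fix $\nu\in\cU^s$ with $P = P(s,\bomega,\nu)$ and write $\bar X = X(s,\bomega,\nu)$, so $P = P_0^s \circ (\bar X^s)^{-1}$ as a measure on $\Omega^s$. Second, unwind the definition of $P^{t,\omega}$: for $A \in \cF^t_T$ we have $P^{t,\omega}(A) = P^\omega_t(\omega\otimes_t A)$ where $P^\omega_t$ is the r.c.p.d.\ of $P$ on $\Omega^s$ given $\cF^s_t$ — and since $P$ is the law of $\bar X^s$, conditioning $P$ at $t$ corresponds to conditioning the process $\bar X^s$, whose r.c.p.d.\ is governed by Lemma~\ref{le:conditioningX}. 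Third, invoke Lemma~\ref{le:conditioningX}: for every $\omega \in \Omega^s$, $\bar X^{t,\omega} = X(t, \bomega\otimes_s\bar X(\omega), \nu^{t,\omega})$ $P_0^t$-a.s. Fourth, translate: $\bar X^{t,\omega} - (\bomega\otimes_s\bar X(\omega))_t = X(t,\eta,\nu^{t,\omega})^t$ has law $P(t,\eta,\nu^{t,\omega})$ under $P_0^t$, with $\eta = \bomega\otimes_s\bar X(\omega)$. Fifth, check that, for $P$-a.e.\ $\omega$, the conditional law $P^{t,\omega}$ coincides with the law of $\bar X^{t,\omega}$ shifted to start at the origin, i.e.\ with $P(t,\eta,\nu^{t,\omega})$; this is the standard fact that the r.c.p.d.\ of the law of a process is the law of the conditioned process, valid off a nullset. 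Finally, observe that $\eta = \bomega\otimes_s\bar X(\omega)$ and $\bomega\otimes_s\omega$ restricted to $[0,t]$ need not literally coincide, but we should check whether $\cP(t,\cdot)$ depends on the path only through $[0,t]$ — it does, since $\mu^{t,\eta},\sigma^{t,\eta}$ depend only on $\eta|_{[0,t]}$ — and then we need $\eta|_{[0,t]}$ to match up; the cleaner route is to note that $\nu^{t,\omega} \in \cU^t$ automatically, so $P(t,\eta,\nu^{t,\omega}) \in \cP(t,\eta)$ directly, and the statement as written asks for membership in $\cP(t,\bomega\otimes_s\omega)$, so we must verify $\eta = \bomega\otimes_s\omega$ on $[0,t]$, which holds precisely when $\bar X_r(\omega) = (\bomega\otimes_s\omega)_r$ for $r\in[s,t]$ — but $(\bomega\otimes_s\omega)_r = \bomega_s + \omega_r$ whereas $\bar X_r(\omega)$ is whatever the canonical coordinate $\omega$ dictates; since $P$ lives on $\Omega^s$ and under $P$ the coordinate process is $\bar X^s$, we have $\bar X_r = \bomega_s + \omega_r$ holds $P$-a.s., giving exactly the a.e.\ qualifier.

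The main obstacle, I expect, is the bookkeeping around regular conditional probabilities: one must argue that the r.c.p.d.\ $P^\omega_t$ of the \emph{law} $P = \mathrm{law}(\bar X^s)$ agrees, for $P$-a.e.\ $\omega$, with the \emph{law} of the conditioned process $\bar X^{t,\cdot}$ under $P_0^t$, combining the uniqueness-a.e.\ of r.c.p.d.'s with Lemma~\ref{le:conditioningX}'s pathwise (but only $P_0^t$-a.s.) identification of $\bar X^{t,\omega}$. A secondary but genuine subtlety is matching the "history" path: $\cP(t,\eta)$ with $\eta = \bomega\otimes_s\bar X(\omega)$ must be reconciled with $\cP(t,\bomega\otimes_s\omega)$, and this reconciliation is exactly what forces the qualifier "for $P$-a.e.\ $\omega$" rather than "for all $\omega$" — on the full-measure set where the coordinate $\omega$ equals the path of $\bar X^s$, the two histories agree on $[0,t]$ and hence yield the same conditioned coefficients, so the two families $\cP(t,\cdot)$ coincide.
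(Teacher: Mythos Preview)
Your proposal has a genuine gap in the fifth step. When you write that ``the r.c.p.d.\ of the law of a process is the law of the conditioned process,'' you are conflating two different conditionings. The r.c.p.d.\ $P^{t,\omega}$ conditions the measure $P$ on the \emph{solution path} being $\omega$ on $[s,t]$ (since $P$ lives on $\Omega^s$ and $\cF^s_t$ is generated by the canonical process there). By contrast, $\bar X^{t,\omega}(\tilde\omega)=\bar X(\omega\otimes_t\tilde\omega)$ conditions the \emph{driving Brownian path} to equal $\omega$ on $[s,t]$, because $\bar X$ is a functional on the source space $(\Omega^s,P_0^s)$. These are not the same unless you can invert $\bar X^s$; your attempted fix in step six (``under $P$ the coordinate process is $\bar X^s$, so $\bar X_r=\bomega_s+\omega_r$ holds $P$-a.s.'') is ill-posed, since $\bar X$ is not a random variable on $(\Omega^s,P)$. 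Concretely, the control in the conditioned measure should be $\nu^{t,\comega}$ with $\comega$ the driving path corresponding to the solution path $\omega$, not $\nu^{t,\omega}$.

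This is exactly where the paper's proof uses Assumption~\ref{ass:filtrGenCond}: the nondegeneracy $\overline{\F^{\bar X}}^{P_0^s}\supseteq\F^s$ yields a progressively measurable map $\beta_\nu$ with $\beta_\nu(\bar X^s)=B^s$ $P_0^s$-a.s., so that one can set $\comega:=\beta_\nu(\omega)$ and obtain the formula $P^{t,\omega}=P(t,\bomega\otimes_s\omega,\nu^{t,\comega})$ for $P$-a.e.\ $\omega$. Your write-up never invokes this assumption, and without it the passage from $P^{t,\omega}$ to the law of $(\bar X^{t,\cdot})^t$ under $P_0^t$ simply does not go through; indeed, for a degenerate $\sigma$ the filtration generated by $\bar X^s$ can be strictly smaller than $\F^s$, and then conditioning on the solution path does not determine a single conditioned control $\nu^{t,\cdot}\in\cU^t$.
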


\begin{proof}
  Since $P\in \cP(s,\bomega)$, we have $P=P(s,\bomega,\nu)$ for some $\nu\in\cU^s$; i.e., setting $X:=X(s,\bomega,\nu)$, $P$ is
  the distribution of
  \[
    X^s = \int_s^\cdot \mu(r,\bomega\otimes_sX,\nu_r)\,dr+\int_s^\cdot \sigma(r,\bomega\otimes_sX,\nu_r)\,dB^s_r\quad\mbox{under}\quad P_0^s.
  \]
  We set $\hat{\mu}_r:=\mu(r,\bomega\otimes_s X,\nu_r)$ and $\hat{\sigma}_r:=\sigma(r,\bomega\otimes_s X,\nu_r)$ and see the above as the integral
  $\int_s^\cdot\hat{\mu}_r\,dr + \int_s^\cdot\hat{\sigma}_r\,dB^s_r$ rather than an SDE. As in~\cite[Lemma~2.2]{SonerTouziZhang.2010dual}, the nondegeneracy assumption
  \eqref{eq:filtrGenCond} implies the existence of a progressively measurable transformation $\beta_\nu: \Omega^s\to\Omega^s$ (depending on $s,\bomega,\nu$) such that
  \begin{equation}\label{eq:betaNu}
    \beta_\nu(X^s)=B^s \quad P_0^s\as
  \end{equation}
  Furthermore, a rather tedious calculation as in the proof of~\cite[Lemma~4.1]{SonerTouziZhang.2010dual} shows that
  \[
    P^{t,\omega} = P_0^t\circ \bigg(\int_s^\cdot\hat{\mu}^{t,\beta_\nu(\omega)}_r\,dr + \int_s^\cdot\hat{\sigma}^{t,\beta_\nu(\omega)}_r\,dB^t_r\bigg)^{-1}\quad \mbox{for}\quad P\ae\;\omega\in\Omega^s.
  \]
  Note that, abbreviating $\comega:=\beta_\nu(\omega)$, we have
  \[
     \hat{\mu}^{t,\beta_\nu(\omega)}_r=\mu\big(r,\bomega\otimes_s X^{t,\comega},\nu_r^{t,\comega}\big)=\mu\big(r,\bomega\otimes_s X(\comega)\otimes_t X^{t,\comega},\nu_r^{t,\comega}\big)
  \]
  and similarly for $\hat{\sigma}^{t,\beta_\nu(\omega)}$.
  Hence, we deduce by Lemma~\ref{le:conditioningX} that
  \[
    P^{t,\omega} = P(t,\bomega\otimes_s X(\comega),\nu^{t,\comega})\quad \mbox{for}\quad P\ae\;\omega\in\Omega^s.
  \]
  In view of~\eqref{eq:betaNu}, we have
  \begin{equation}\label{eq:betaalphaInverse}
    X^s(\beta_\nu(B^s))=B^s\quad P\as;
  \end{equation}
  i.e., $X(\comega)^s=X^s(\comega)=\omega$ for $P$-a.e.\ $\omega\in\Omega^s$, and we conclude that
  \begin{equation}\label{eq:shiftedMeasureFormula}
   P^{t,\omega} = P(t,\bomega\otimes_s \omega,\nu^{t,\comega})\quad \mbox{for}\quad P\ae\;\omega\in\Omega^s.
  \end{equation}
  In particular, $P^{t,\omega} \in\cP(t,\bomega\otimes_s\omega)$.
\end{proof}

\begin{lemma}[\textbf{Pasting}]\label{le:pasting}
  Let $0\leq s\leq t\leq T$, $\bomega\in\Omega$, $\nu\in\cU^s$ and set $P:=P(s,\bomega,\nu)$, $X:=X(s,\bomega,\nu)$. Let
  $(E^i)_{0\leq i\leq N}$ be a finite $\cF^s_t$-measurable partition of $\Omega^s$,
  $\nu^i\in\cU^t$ for $1\leq i\leq N$ and define $\bar{\nu}\in\cU^s$ by
  \begin{align*}
    \bar{\nu}(\omega)
      :=\1_{[s,t)} \nu(\omega)
      + \1_{[t,T]} \bigg[\nu(\omega)\1_{E^0}(X(\omega)^s) +\sum_{i=1}^N \nu^i(\omega^t)\1_{E^i}(X(\omega)^s)\bigg].%
  \end{align*}
  Then $\bar{P}:=P(s,\bomega,\bar{\nu})$ satisfies $\bar{P}=P$ on $\cF^s_t$ and
  \[
   \bar{P}^{t,\omega}=P(t,\bomega\otimes_s\omega, \nu^i)\quad\mbox{for}\quad P\ae\;\omega\in E^i,\quad1\leq i\leq N.
  \]
\end{lemma}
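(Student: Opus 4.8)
The plan is to reduce the pasting lemma to the conditioning result already established in Lemma~\ref{le:conditioningX} together with the invariance property of Lemma~\ref{le:shiftedMeasure}, by carefully tracking how the concatenated control $\bar\nu$ feeds through the SDE. First I would verify the statement $\bar P = P$ on $\cF^s_t$: since $\bar\nu(\omega) = \nu(\omega)$ on $[s,t)$ and the coefficients $\mu^{s,\bomega},\sigma^{s,\bomega}$ are progressively measurable (so the value on $[s,t)$ depends only on the path up to $t$), strong uniqueness on $[s,t]$ forces $X(s,\bomega,\bar\nu) = X(s,\bomega,\nu) = X$ on $[s,t]$, $P_0^s$-a.s.; hence the laws of the stopped paths agree and $\bar P = P$ on $\cF^s_t$. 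In particular each $E^i$ (pulled back through $X^s$) has the same probability under $\bar P$ and under $P$.

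For the second assertion, set $\bar X := X(s,\bomega,\bar\nu)$. On the event $\{X^s\in E^i\}$ the control $\bar\nu$ coincides, from time $t$ onward, with the $\F^t$-progressively-measurable process $\omega\mapsto\nu^i(\omega^t)$; more precisely, as in the proof of Lemma~\ref{le:shiftedMeasure}, I would introduce the transformation $\beta_{\bar\nu}$ with $\beta_{\bar\nu}(\bar X^s) = B^s$ so that the event $E^i$ in terms of $\omega\in\Omega^s$ becomes $\{X(\comega)^s\in E^i\} = \{\omega\in E^i\}$ with $\comega := \beta_{\bar\nu}(\omega)$, using~\eqref{eq:betaalphaInverse}. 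Then I would apply Lemma~\ref{le:conditioningX} to $\bar X$: for $P$-a.e.\ $\omega$,
\[
  \bar X^{t,\omega} = X\big(t,\,\bomega\otimes_s \bar X(\omega),\,\bar\nu^{t,\omega}\big)\quad P_0^t\as
\]
On $\omega\in E^i$ the conditioned control is $\bar\nu^{t,\omega} = \nu^i$ (by the definition of $\bar\nu$ on $[t,T]$ and the fact that $\nu^i$ depends only on the shifted path), and $\bar X(\omega) = X(\omega)$ on $[s,t]$ by the first part, so $\bomega\otimes_s\bar X(\omega) = \bomega\otimes_s X(\omega)$ as paths on $[0,t]$, which is all that matters for the conditioned coefficients. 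Taking distributions of the shift $(\cdot)^t$ under $P_0^t$ on both sides, and recalling $\bar P^{t,\omega} = P_0^t\circ(\bar X^{t,\omega})^t{}^{-1}$ together with $X^s(\omega) = \omega$ (so $\bomega\otimes_s X(\omega)$ reads as $\bomega\otimes_s\omega$ in the $P$-a.e.\ identification), yields $\bar P^{t,\omega} = P(t,\bomega\otimes_s\omega,\nu^i)$ for $P$-a.e.\ $\omega\in E^i$.

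The main obstacle is bookkeeping rather than a genuinely new idea: one must be careful that all the "$P$-a.e.'' and "$P_0^t$-a.s.'' exceptional sets are compatible across the finitely many indices $i$ (a finite union of nullsets is null, so this is harmless), that $\bar\nu\in\cU^s$ indeed holds — i.e.\ the integrability conditions~\eqref{eq:sigmaIntegrable} survive the pasting, which follows because on each $E^i$ the control is eventually one of finitely many admissible controls and the partition is measurable — and, most delicately, that the identity $\bar\nu^{t,\omega}=\nu^i$ on $E^i$ is literally correct given that $\bar\nu$ is defined via $X(\omega)^s$ while the claim is phrased via $\omega$; this is exactly where~\eqref{eq:betaalphaInverse} is used to pass from $X^s$ to the identity on $\Omega^s$. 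A secondary subtlety is the interplay between the raw filtration and the need to invoke Assumption~\ref{ass:filtrGenCond} through Lemma~\ref{le:shiftedMeasure}; I would simply cite that lemma's construction of $\beta_{\bar\nu}$ rather than redo it.
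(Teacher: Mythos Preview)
Your plan coincides with the paper's proof: both establish $\bar P=P$ on $\cF^s_t$ from $\bar\nu=\nu$ on $[s,t)$, then identify $\bar P^{t,\omega}$ by computing the conditioned control $\bar\nu^{t,\comega}$ on the event corresponding to $E^i$, using the transformation $\beta$ and relation~\eqref{eq:betaalphaInverse}. The paper simply quotes formula~\eqref{eq:shiftedMeasureFormula} from Lemma~\ref{le:shiftedMeasure} to do this in one stroke, whereas you rederive it from Lemma~\ref{le:conditioningX}.

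There is, however, a genuine notational slip that you should fix rather than defer to the ``obstacles'' paragraph. When you write $\bar P^{t,\omega}=P_0^t\circ((\bar X^{t,\omega})^t)^{-1}$ and invoke Lemma~\ref{le:conditioningX} ``for $P$-a.e.\ $\omega$'', you are using the same symbol $\omega$ for two distinct objects: on the left, $\omega$ is a realization of $\bar X^s$ (a path in the $\bar P$-world); on the right, the argument of $\bar X^{t,\cdot}$ and of $\bar\nu^{t,\cdot}$ in Lemma~\ref{le:conditioningX} is the underlying Brownian path under $P_0^s$. The correct identity is
\[
  \bar P^{t,\omega}=P_0^t\circ\bigl((\bar X^{t,\comega})^t\bigr)^{-1},\qquad \comega=\beta_{\bar\nu}(\omega),
\]
and establishing this is exactly the ``rather tedious calculation'' cited in the proof of Lemma~\ref{le:shiftedMeasure}---it is not something one can simply recall. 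Likewise your claim ``on $\omega\in E^i$ the conditioned control is $\bar\nu^{t,\omega}=\nu^i$'' should read $\bar\nu^{t,\comega}=\nu^i$ for $P$-a.e.\ $\omega\in E^i$; this is precisely what the paper verifies via~\eqref{eq:betaalphaInverse} and the $\cF^s_t$-measurability of $E^i$. Once $\omega$ is replaced by $\comega$ in these places (or, equivalently, once you cite~\eqref{eq:shiftedMeasureFormula} directly), your argument is the paper's argument. The remaining points you flag---finite unions of nullsets, admissibility of $\bar\nu$, the interchangeability of $\beta_{\bar\nu}$ and $\beta_\nu$ on $[s,t]$---are handled exactly as you anticipate.
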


\begin{proof}
  As $\bar{\nu}=\nu$ on $[s,t)$, we have $X(s,\bomega,\bar{\nu})=X$ on $[s,t]$ and in particular $\bar{P}=P$ on $\cF^s_t$.
  Let $1\leq i\leq N$, we show that $\bar{P}^{t,\omega}=P(t,\bomega\otimes_s\omega, \nu^i)$ for $P$-a.e.\ $\omega\in E^i$.
  Recall from~\eqref{eq:shiftedMeasureFormula} that
  \[
    \bar{P}^{t,\omega}=P(t,\bomega\otimes_s\omega,\bar{\nu}^{t,\beta_{\bar{\nu}}(\omega)}) \quad \mbox{for}\quad \bar{P}\ae\;\omega\in\Omega^s,
  \]
  where $\beta_{\bar{\nu}}$ is defined as in~\eqref{eq:betaNu}. Since both sides of this equality depend only on the restriction of $\omega$ to $[s,t]$ and $X(s,\bomega,\bar{\nu})=X$ on $[s,t]$, we also have that
  \begin{equation}\label{eq:proofPastingShift}
    \bar{P}^{t,\omega}=P(t,\bomega\otimes_s\omega,\bar{\nu}^{t,\check{\omega}}) \quad \mbox{for}\quad P\ae\;\omega\in\Omega^s,
  \end{equation}
  where $\check{\omega}=\beta_\nu(\omega)$ is defined as below~\eqref{eq:betaNu}.
  Note that by~\eqref{eq:betaalphaInverse}, $\omega\in E^i$ implies $X(\check{\omega})^s\in E^i$ under $P$.
  (More precisely, if $A\subseteq \Omega^s$ is a set such that $A\subseteq E^i$ $P$-a.s., then
  $\{X(\check{\omega})^s:\, \omega\in A\}\subseteq E^i$ $P$-a.s.) In fact, since $X$ is adapted and $E^i\in\cF_t^s$, we even have that
  \[
    X(\check{\omega}\otimes_t\tomega )^s\in E^i\quad \mbox{for all}\quad \tomega\in\Omega^t, \quad \mbox{for}\quad P\ae\ \;\omega\in E^i.
  \]
  By the definition of $\bar{\nu}$, we conclude that
  \[
    \bar{\nu}^{t,\check{\omega}}(\tomega)=\bar{\nu}(\check{\omega}\otimes_t\tomega)=
    \nu^i((\check{\omega}\otimes_t\tomega)^t)=\nu^i(\tomega),\quad \tomega\in\Omega^t, \quad \mbox{for}\quad P\ae\ \;\omega\in E^i.
  \]
  In view of~\eqref{eq:proofPastingShift}, this yields the claim.
\end{proof}

\begin{remark}\label{rk:pastingMeasures}
  {\textrm
  In~\cite{SonerTouziZhang.2010dual} and \cite{Nutz.10Gexp}, it was possible to use a pasting of measures as follows: in the notation of Lemma~\ref{le:pasting}, it was possible to specify measures $P^i$ on $\Omega^t$, corresponding to certain admissible controls, and use the pasting
  $\hat{P}(A):= P(A\cap E^0) + \sum_{i=1}^N  E^P\big[P^i(A^{t,\omega})\1_{E^i}(\omega)\big]$
  to obtain a measure in $\Omega^s$ which again corresponded to some admissible control and satisfied
    \begin{equation}\label{eq:oldPasting}
    \hat{P}^{t,\omega}=P^i\quad\mbox{for}\quad \hat{P}\ae\;\omega\in E^i,
  \end{equation}
  which was then used in the proof of the dynamic programming principle.

  This is not possible in our SDE-driven setting. Indeed, suppose that $\hat{P}$ is of the form $P(s,\bomega,\nu)$ for some $\bomega$ and $\nu$, then we see from~\eqref{eq:shiftedMeasureFormula} that, when $\sigma$ is general,  $\hat{P}^{t,\omega}$ will depend explicitly on $\omega$, which contradicts~\eqref{eq:oldPasting}. Therefore, the subsequent proof uses an argument where~\eqref{eq:oldPasting} holds only at one specific $\omega\in E^i$; on the rest of $E^i$, we confine ourselves to controlling the error.
  }
\end{remark}

We can now show the dynamic programming principle. Apart from the difference remarked above, the basic pattern of the proof is the same as in \cite[Proposition~4.7]{SonerTouziZhang.2010dual}.

\begin{proof}[Proof of Theorem~\ref{thm:DPP}]
  Using the notation~\eqref{eq:defMeasSet}, our claim~\eqref{eq:DPP} can be stated as
  \begin{equation}\label{eq:DPPres}
    \sup_{P\in\cP(s,\bomega)} E^P\big[ \xi^{s,\bomega} \big] = \sup_{P\in\cP(s,\bomega)} E^P\big[ (V_t)^{s,\bomega} \big]\quad \mbox{for all}\quad  \bomega\in\Omega.
  \end{equation}

  (i)~We first show the inequality ``$\leq$'' in~\eqref{eq:DPPres}.
  Fix $\bomega\in \Omega$ and $P\in \cP(s,\bomega)$. Lemma~\ref{le:shiftedMeasure} shows that $P^{t,\omega}\in \cP(t,\bomega\otimes_s \omega)$ for $P$-a.e.\ $\omega\in \Omega^s$ and hence that
  \begin{align*}
   E^{P^{t,\omega}}\big[(\xi^{s,\bomega})^{t,\omega}\big]
    & = E^{P^{t,\omega}}\big[\xi^{t,\bomega \otimes_s\omega}\big]\\
    & \leq \sup_{P'\in\cP(t,\bomega\otimes_s \omega)} E^{P'}\big[\xi^{t,\bomega \otimes_s\omega}\big]\\
    & = V_t(\bomega\otimes_s \omega)\\
    & = V_t^{s,\bomega}(\omega)\quad\mbox{for $P$-a.e.\ $\omega\in \Omega^s$}.
  \end{align*}
  Since $V_t$ is measurable by Proposition~\ref{pr:valueFunctionCont}, we can take $P(d\omega)$-expectations on both sides to obtain that
  \[
    E^P\big[\xi^{s,\bomega}\big]
    = E^{P}\Big[E^{P^{t,\omega}}\big[(\xi^{s,\bomega})^{t,\omega}\big]\Big]
    \leq E^P\big[V_t^{s,\bomega}\big].
  \]
  We take the supremum over $P\in\cP(s,\bomega)$ on both sides and obtain the claim.

  (ii)~We now show the inequality ``$\geq$'' in~\eqref{eq:DPPres}. Fix $\bomega\in\Omega$, $\nu\in\cU^s$ and let $P=P(s,\bomega,\nu)$. We fix $\eps>0$ and construct a countable cover of the state space as follows.

  Let $\homega\in\Omega^s$. By the definition of $V_t(\bomega\otimes_s\homega)$, there exists $\nu^{(\homega)}\in\cU^t$ such that
  $P^{(\homega)}:=P(t,\bomega\otimes_s\homega,\nu^{(\homega)})$ satisfies
  \begin{equation}\label{eq:epsOptimizer}
    V_t(\bomega\otimes_s\homega)\leq E^{P^{(\homega)}}[\xi^{t,\bomega\otimes_s\homega}]+\eps.
  \end{equation}
  Let $B(\eps,\homega)\subseteq \Omega^s$ denote the open $\|\cdot\|_{[s,t]}$-ball of radius $\eps$ around $\homega$. Since
  $(\Omega^s,\|\cdot\|_{[s,t]})$ is a separable (quasi-)metric space and therefore Lindel\"of, there exists a
  sequence $(\homega^i)_{i\geq1}$ in $\Omega^s$ such that the balls $B^i:=B(\eps,\homega^i)$ form a cover of $\Omega^s$. As an $\|\cdot\|_{[s,t]}$-open set, each $B^i$ is $\cF^s_t$-measurable and hence
  \[
    E^1:=B^1,\quad E^{i+1}:=B^{i+1}\setminus (E^1 \cup\dots\cup E^i),\quad i\geq1
  \]
  defines a partition $(E^i)_{i\geq 1}$ of $\Omega^s$. Replacing $E^i$ by
  \[
    \big(E^i\cup\{\homega^i\}\big) \setminus \{\homega^j:\,j\geq 1,\,j\neq i\}
  \]
  if necessary, we may assume that $\homega^i\in E^i$ for $i\geq1$. We set $\nu^i:=\nu^{(\homega^i)}$ and $P^i:=P(t,\bomega\otimes_s\homega^i,\nu^i)$.

  Next, we paste the controls $\nu^i$. Fix $N\in\N$ and let $A_N:= E^1\cup\dots\cup E^N$, then $\{A_N^c,E^1,\dots,E^N\}$ is a finite partition of $\Omega^s$. Let $X:=X(s,\bomega,\nu)$, define
  \[
    \bar{\nu}(\omega)
    :=\1_{[s,t)}\nu(\omega)
      + \1_{[t,T]}\bigg[\nu(\omega)\1_{A_N^c}(X(\omega)^s) +\sum_{i=1}^N \nu^i(\omega^t)\1_{E^i}(X(\omega)^s)\bigg]
  \]
  and let $\bar{P}:=P(s,\bomega,\bar{\nu})$. Then, by Lemma~\ref{le:pasting}, we have
  $\bar{P}=P$ on $\cF^s_t$ and $\bar{P}^{t,\omega}=P(t,\bomega\otimes_s\omega, \nu^i)$ for all $\omega\in \tilde{E}^i$, for some subset
  $\tilde{E}^i\subseteq E^i$ of full measure $P$.
  Let us assume for the moment that
  \begin{equation}\label{eq:proofDPPprelimAssumption}
    \homega^i\in \tilde{E}^i\quad\mbox{for}\quad 1\leq i\leq N,
  \end{equation}
  then we may conclude that
  \begin{equation}\label{eq:shiftIsPi}
    \bar{P}^{t,\homega^i}=P^i\quad \mbox{for}\quad 1\leq i\leq N.
  \end{equation}

  Recall from Proposition~\ref{pr:valueFunctionCont} that $V_t$ admits a modulus of continuity $\rho^{(V_t)}$. Moreover, we obtain similarly as in~\eqref{eq:modulusXi} that there exists a modulus of continuity
  $\rho^{(\xi)}$ such that
  \[
   |\xi^{t,\bomega\otimes_s\omega}-\xi^{t,\bomega\otimes_s\omega'}|\leq\rho^{(\xi)}(\|\omega-\omega'\|_{[s,t]}).
  \]
  Let $\omega\in E^i\subseteq \Omega^s$ for some $1\leq i\leq N$, then $\|\omega-\homega^i\|_{[s,t]}< \eps$. Together with~\eqref{eq:epsOptimizer} and~\eqref{eq:shiftIsPi}, we obtain that
  \begin{align}
    V^{s,\bomega}_t(\omega)
     & \leq V^{s,\bomega}_t(\homega^i) + \rho^{(V_t)}(\eps) \nonumber\\
     & \leq E^{P^i}[\xi^{t,\bomega\otimes_s\homega^i}] + \eps  + \rho^{(V_t)}(\eps) \nonumber\\
     & =  E^{\bar{P}^{t,\homega^i}}[\xi^{t,\bomega\otimes_s\homega^i}] + \eps  + \rho^{(V_t)}(\eps). \label{eq:proofDPP1}
  \end{align}
  Recall from~\eqref{eq:rewardUC} that the mapping
  \[
    \omega'\mapsto E^{P(t,\omega',\nu^i)}[\xi^{t,\omega'}]
  \]
  is uniformly continuous with a modulus $\tilde{\rho}$ independent of $i$ and $N$. Since $\omega\in E^i$,
  it follows that
  \begin{align}\label{eq:proofDPP2}
   E^{\bar{P}^{t,\homega^i}}&[\xi^{t,\bomega\otimes_s\homega^i}]-E^{\bar{P}^{t,\omega}}[\xi^{t,\bomega\otimes_s\omega}] \nonumber\\
   &=E^{P(t,\bomega\otimes_s\homega^i,\nu^i)}[\xi^{t,\bomega\otimes_s\homega^i}]-E^{P(t,\bomega\otimes_s\omega,\nu^i)}[\xi^{t,\bomega\otimes_s\omega}] \nonumber \\
   & \leq \tilde{\rho}(\eps)\quad \mbox{for}\quad \bar{P}\ae\;\omega\in E^i.
  \end{align}
  Setting $\rho(\eps):=\tilde{\rho}(\eps)+ \eps  + \rho^{(V_t)}(\eps)$ and noting that
  \[
   E^{\bar{P}^{t,\omega}}[\xi^{t,\bomega\otimes_s\omega}] = E^{\bar{P}^{t,\omega}}[(\xi^{s,\bomega})^{t,\omega}] = E^{\bar{P}}[\xi^{s,\bomega}|\cF^s_t](\omega),
  \]
  the inequalities \eqref{eq:proofDPP1} and~\eqref{eq:proofDPP2} imply that
  \begin{equation}\label{eq:proofDPPConcatMeas}
    V^{s,\bomega}_t(\omega) \leq E^{\bar{P}}\big[\xi^{s,\bomega}\big|\cF^s_t\big](\omega) + \rho(\eps)
  \end{equation}
  for $\bar{P}$-a.e.\ (and thus $P$-a.e.) $\omega\in E^i$. This holds for all $1\leq i\leq N$.
  As $P=\bar{P}$ on $\cF^s_t$, taking $P$-expectations yields
  \begin{equation}\label{eq:proofDPP3}
   E^P[V^{s,\bomega}_t \1_{A_N}] \leq E^{\bar{P}_N}[\xi^{s,\bomega} \1_{A_N}] + \rho(\eps),
  \end{equation}
  where we write $\bar{P}_N=\bar{P}$ to recall the dependence on $N$.
  Since $A_N\uparrow \Omega^s$, we have $\bar{P}_N(A_N^c)=P(A_N^c)\to0$ as $N\to\infty$.
  In view of
  \[
    E^{\bar{P}_N}[\xi^{s,\bomega} \1_{A_N}]
    = E^{\bar{P}_N}[\xi^{s,\bomega}] - E^{\bar{P}_N}[\xi^{s,\bomega} \1_{A_N^c}]
    \leq E^{\bar{P}_N}[\xi^{s,\bomega}] + \|\xi\|_{\infty}P_N(A_N^c),
  \]
  we conclude from~\eqref{eq:proofDPP3} that
  \[
   E^P[V^{s,\bomega}_t]
   \leq \limsup_{N\to\infty}E^{\bar{P}_N}[\xi^{s,\bomega}] + \rho(\eps)
   \leq \sup_{P'\in\cP(s,\bomega)} E^{P'}[\xi^{s,\bomega}] + \rho(\eps).
  \]
  Since $P\in\cP(s,\bomega)$
  was arbitrary, letting $\eps\to0$ completes the proof of~\eqref{eq:DPPres}.

  It remains to argue that our assumption~\eqref{eq:proofDPPprelimAssumption} does not entail a loss of generality.
  Indeed, assume that $\homega^i\notin \tilde{E}^i$ for some $i$. Then there are two possible cases. The case $P(E^i)=0$ is easily seen to be harmless; recall that the measure $P$ was fixed throughout the proof. In the case $P(E^i)>0$, we also have
  $P(\tilde{E}^i)>0$ and in particular $\tilde{E}^i\neq \emptyset$. Thus we can replace $\homega^i$ by an arbitrary element of $\tilde{E}^i$ (which can be chosen independently of $N$). Using the continuity of the value function (Proposition~\ref{pr:valueFunctionCont}) and of the reward function~\eqref{eq:rewardUC}, we see that the above arguments still apply if we add an additional modulus of continuity in~\eqref{eq:proofDPP1}.
\end{proof}

\section{Extension of the Value Function}\label{se:extension}

In this section, we extend the value function $\xi\mapsto V_t(\xi;\cdot)$ to an $L^1$-type space of random variables $\xi$, in the spirit of, e.g., \cite{DenisMartini.06}. While the construction of $V_t$ in the previous section required a precise analysis ``$\omega$ by $\omega$'', we can now move towards a more probabilistic presentation. In particular, we shall often write $V_t(\xi)$ for the random variable $\omega\mapsto V_t(\xi;\omega)$.

For reasons explained in Remark~\ref{rk:xFixed} below, we fix from now on an initial condition $x\in\R^d$ and let
\[
  \cP_x:=\{P(0,x,\nu): \nu\in\cU\}
\]
be the corresponding set of measures at time $s=0$. Given a random variable $\psi$ on $\Omega$, we write $\psi^x$ as a shorthand for $\psi^{0,x}\equiv\psi(x\otimes_0\cdot)$. We also write $V^x_t(\xi)$ for $(V_t(\xi))^x$.

Given $p\in[1,\infty)$, we define $L^p_{\cP_x}$ to be the space of $\cF_T$-measurable random variables $X$ satisfying
\[
  \|X\|_{L^p_{\cP_x}}:=\sup_{P\in \cP_x} \|X\|_{L^p(P)}<\infty,
\]
where $\|X\|^p_{L^p(P)}:=E^P[|X|^p]$. More precisely, we identify functions which are equal $\cP_x$-quasi-surely, so that $L^p_{\cP_x}$ becomes a Banach space. (Two functions are equal $\cP_x$-quasi-surely, $\cP_x$-q.s.\ for short, if they are equal $P$-a.s.\ for all $P\in\cP_x$.) Furthermore, given $t\in[0,T]$,
\[
  \L^p_{\cP_x}(\cF_t)\mbox{ is defined as the $\|\cdot\|_{L^p_{\cP_x}}$-closure of }\UC_b(\Omega_t)\subseteq L^p_{\cP_x}.
\]
Since any $L^p_{\cP_x}$-convergent sequence has a $\cP_x$-q.s.\ convergent subsequence, any element of $\L^p_{\cP_x}(\cF_t)$ has an $\cF_t$-measurable representative.
For brevity, we shall often write $\L^p_{\cP_x}$ for $\L^p_{\cP_x}(\cF_T)$. %

\begin{remark}\label{rk:quasiUnifCont}
  {\textrm
  The space $\L^p_{\cP_x}$ can be described as follows.
  We say that $\xi\in L^p_{\cP_x}$ is $\cP_x$-quasi uniformly continuous if $\xi$ has a representative $\xi'$ with the property that for all $\eps>0$ there exists an open set $G\subseteq \Omega$ such that $P(G)<\eps$ for all $P\in\cP$ and such that the restriction $\xi'|_{\Omega\setminus G}$ is uniformly continuous.
  Then $\L^p_{\cP_x}$ consists of all $\xi\in L^p_{\cP_x}$ such that $\xi$ is $\cP_x$-quasi uniformly continuous and $\lim_{n\to\infty} \|\xi\1_{\{|\xi|\geq n\}}\|_{L^p_{\cP_x}}=0$.
  Moreover, If $\cP_x$ is weakly relatively compact, then $\L^p_{\cP_x}$ contains all
  bounded continuous functions on $\Omega$.

  The proof is the same as in~\cite[Proposition~5.2]{Nutz.10Gexp}, which, in turn, followed an argument of~\cite{DenisHuPeng.2010}.
  }
\end{remark}

Before extending the value function to $\L^1_{\cP_x}$, let us explain why we are working under a fixed initial condition $x\in\R^d$.

\begin{remark}\label{rk:xFixed}
  {\textrm
  There is no fundamental obstruction to writing the theory without fixing the initial condition $x$; in fact, most of the results would be more elegant if stated using $\bar{\cP}$ instead of $\cP_x$, where $\bar{\cP}$ is the set of all distributions of the form~\eqref{eq:distribXNoShift}, with arbitrary initial condition.
  However, the set $\bar{\cP}$ is very large and therefore the corresponding space
  $\L^1_{\bar{\cP}}$ is very small, which is undesirable for the domain of our extended value function. As an illustration, consider a random variable of the form $\xi(\omega):=f(\omega_0)$ on $\Omega$, where $f: \R^d\to\R$ is a measurable function. Then
  \[
    \|\xi\|_{L^1_{\bar{\cP}}}=\sup_{x\in\R^d} |f(x)|;
  \]
  i.e., $\xi$ is in $L^1_{\bar{\cP}}$ only when $f$ is uniformly bounded.
  As a second issue in the same vein, it follows from the Arzel\`a-Ascoli theorem that the set $\bar{\cP}$ is never weakly relatively compact. The latter property, which is satisfied by $\cP_x$ for example when $\mu$ and $\sigma$ are bounded, is sometimes useful in the context of quasi-sure analysis.
  }
\end{remark}

\begin{lemma}\label{le:LipschitzAndExtension}
  Let $p\in[1,\infty)$. The mapping $V^x_t$ on $\UC_b(\Omega)$ is 1-Lipschitz,
  \[
    \|V^x_t(\xi)-V^x_t(\psi)\|_{L^p_{\cP_x}}\leq \|\xi-\psi\|_{L^p_{\cP_x}}\quad\mbox{for all}\quad  \xi,\psi\in \UC_b(\Omega).
  \]
  As a consequence, $V^x_t$ uniquely extends to a Lipschitz-continuous mapping
  \[
    V^x_t:\, \L^p_{\cP_x}(\cF_T)\to \L^p_{\cP_x}(\cF_t).
  \]
\end{lemma}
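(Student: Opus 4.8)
The plan is to establish the pointwise (in $\omega$) contraction estimate first and then invoke abstract functional analysis for the extension. Fix $t\in[0,T]$, $\xi,\psi\in\UC_b(\Omega)$ and $\omega\in\Omega$. By Definition~\ref{def:valueFct} and the elementary inequality $|\sup_a f(a)-\sup_a g(a)|\leq \sup_a|f(a)-g(a)|$, we have
\[
  |V_t(\xi;\omega)-V_t(\psi;\omega)|
  \leq \sup_{\nu\in\cU^t} \big|E^{P(t,\omega,\nu)}[\xi^{t,\omega}]-E^{P(t,\omega,\nu)}[\psi^{t,\omega}]\big|
  \leq \sup_{\nu\in\cU^t} E^{P(t,\omega,\nu)}\big[|\xi^{t,\omega}-\psi^{t,\omega}|\big].
\]
Now I would rewrite $E^{P(t,\omega,\nu)}[|\xi^{t,\omega}-\psi^{t,\omega}|]$ using the identities established in the notation subsection: recalling that $|\xi^{t,\omega}-\psi^{t,\omega}|=|\xi-\psi|^{t,\omega}$ and that $E^{P(t,\omega,\nu)}[\zeta^{t,\omega}]$ is a conditional expectation of $\zeta$ under the unconditioned measure whose $0$-translate is $P(t,\omega,\nu)$, I would like to bound this by $\|\,|\xi-\psi|\,\|_{L^p(\,\cdot\,)}$ and then by $\|\xi-\psi\|_{L^p_{\cP_x}}$; the cleanest route is to use Jensen's inequality $E[|\zeta|]\leq E[|\zeta|^p]^{1/p}$ and then observe that the relevant measures (after prepending the fixed history $x$ and conditioning) all lie in $\cP_x$ up to conditioning, so that the conditional $L^p$-norm is dominated by $\|\,\cdot\,\|_{L^p_{\cP_x}}$.

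More carefully, I would pass to the $x$-conditioned version: for $V^x_t(\xi)=(V_t(\xi))^x=V_t(\xi;x\otimes_0\cdot)$ one gets, for every $\omega\in\Omega^0$,
\[
  |V^x_t(\xi;\omega)-V^x_t(\psi;\omega)|
  \leq \sup_{\nu\in\cU^t} E^{P(t,x\otimes_0\omega,\nu)}\big[|\xi-\psi|^{t,x\otimes_0\omega}\big].
\]
Fix $P\in\cP_x$, say $P=P(0,x,\mu)$ for some $\mu\in\cU$. Using Lemma~\ref{le:shiftedMeasure} (with $s=0$, $\bomega=x$), for $P$-a.e.\ $\omega$ the conditional measure $P^{t,\omega}$ lies in $\cP(t,x\otimes_0\omega)$, hence it is attainable in the supremum defining $V^x_t$; conversely every $P(t,x\otimes_0\omega,\nu)$ arising in that supremum can be realized, via the pasting Lemma~\ref{le:pasting}, as $\hat P^{t,\omega}$ for a suitable $\hat P\in\cP_x$ agreeing with $P$ on $\cF^0_t$ — at least up to the $\eps$-approximation familiar from the proof of Theorem~\ref{thm:DPP}. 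Taking $\|\cdot\|_{L^p(P)}$-norms and using $E^{P^{t,\omega}}[|\xi-\psi|^{t,\omega}]=E^P[|\xi-\psi|\mid\cF_t](\omega)$ together with conditional Jensen and the tower property, one gets
\[
  \big\|\,V^x_t(\xi)-V^x_t(\psi)\,\big\|_{L^p(P)}
  \leq \sup_{\hat P\in\cP_x}\big\|\,E^{\hat P}[|\xi-\psi|\mid\cF_t]\,\big\|_{L^p(\hat P)}
  \leq \sup_{\hat P\in\cP_x}\|\xi-\psi\|_{L^p(\hat P)} = \|\xi-\psi\|_{L^p_{\cP_x}}.
\]
Taking the supremum over $P\in\cP_x$ on the left gives the claimed $1$-Lipschitz bound on $\UC_b(\Omega)$. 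For the extension statement: Proposition~\ref{pr:valueFunctionCont} shows $V^x_t$ maps $\UC_b(\Omega)$ into $\UC_b(\Omega_t)\subseteq\L^p_{\cP_x}(\cF_t)$, the latter being complete (it is a closed subspace of the Banach space $L^p_{\cP_x}$); since $\UC_b(\Omega)$ is dense in $\L^p_{\cP_x}(\cF_T)$ by definition, a uniformly continuous (here Lipschitz) map into a complete space extends uniquely and continuously to the closure, with the same Lipschitz constant.

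The main obstacle is the direction showing that $P(t,x\otimes_0\omega,\nu)$-type measures appearing in $V^x_t$ are (essentially) conditional measures of elements of $\cP_x$ with the correct history fixed — i.e.\ that the supremum defining $V^x_t$ is genuinely the conditional sublinear expectation of $|\xi-\psi|$ under $\cP_x$, not something larger. This is precisely the content encoded in Lemmas~\ref{le:shiftedMeasure} and~\ref{le:pasting} and was already navigated (with the $\eps$-pasting and error control) in the proof of Theorem~\ref{thm:DPP}; indeed, one can shortcut the whole argument by simply applying the dynamic programming principle to the uniformly continuous functionals $\xi$ and $\psi$: from $V_s = V_s\circ V_t$ at $s=0$ one reads off $V^x_0(\xi)-V^x_0(\psi)$, but more to the point the same pasting machinery that proves Theorem~\ref{thm:DPP} yields the stated $L^p$-contraction directly once one replaces $\xi^{s,\bomega}$ by $|\xi-\psi|^{s,\bomega}$ and drops the reward-continuity bookkeeping, since no $\eps$-optimization over $\nu$ is needed for a contraction inequality. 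So in the write-up I would either (a) cite the relevant steps of the proof of Theorem~\ref{thm:DPP} almost verbatim, or (b) give the short self-contained argument above with $p=1$ and then note the $p\geq1$ case follows identically via Jensen. Everything else — conditional Jensen, the tower property, density of $\UC_b(\Omega)$, completeness of the target space, and uniqueness of continuous extensions — is routine.
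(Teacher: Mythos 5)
Your proposal is correct and, in its final form, coincides with the paper's argument: the paper's proof is exactly the ``shortcut'' you describe at the end, namely the pointwise bound $|V^x_t(\xi)-V^x_t(\psi)|\leq V^x_t(|\xi-\psi|)$, Jensen's inequality in the form $V^x_t(|\xi-\psi|)^p\leq V^x_t(|\xi-\psi|^p)$ (noting $|\xi-\psi|^p\in\UC_b(\Omega)$), the dynamic programming principle \eqref{eq:DPP} at $s=0$ to identify $\sup_{P\in\cP_x}E^P[V^x_t(|\xi-\psi|^p)]$ with $\sup_{P\in\cP_x}E^P[|\xi-\psi|^p]$, and then the standard density/completeness argument for the extension, with Proposition~\ref{pr:valueFunctionCont} guaranteeing that the target is $\L^p_{\cP_x}(\cF_t)$. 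The long middle portion of your write-up, which routes through conditional measures, Lemmas~\ref{le:shiftedMeasure} and~\ref{le:pasting}, and an essential-supremum representation, is both heavier than needed and slightly delicate at this point in the paper: passing from the $P$-a.s.\ essential supremum of $\{E^{P'}[|\xi-\psi|\,|\,\cF_t]\}_{P'}$ to $\sup_{\hat P}\|E^{\hat P}[|\xi-\psi|\,|\,\cF_t]\|_{L^p(\hat P)}$ requires upward directedness of that family, i.e.\ the stability under pasting of Lemma~\ref{le:stabilityUnderPasting} and effectively the representation of Lemma~\ref{th:DPPesssupL1}, both of which appear \emph{after} the present lemma (and the latter's step~(iii) uses the extension you are constructing, so one must restrict to $\UC_b$ arguments to avoid circularity, as you implicitly do). Since applying Jensen \emph{before} invoking the DPP eliminates the essential supremum entirely, option~(b) of your conclusion is the one to write up.
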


\begin{proof}
 The argument is standard and included only for completeness. Note that $|\xi-\psi|^p$ is again in $\UC_b(\Omega)$. The definition of $V^x_t$ and Jensen's inequality imply that
 $|V^x_t(\xi)-V^x_t(\psi)|^p\leq V^x_t(|\xi-\psi|)^p\leq V^x_t(|\xi-\psi|^p)$. Therefore,
  \[
    \|V^x_t(\xi)-V^x_t(\psi)\|_{L^p_{\cP_x}}
      \leq \sup_{P\in \cP_x} E^P\big[V^x_t(|\xi-\psi|^p)\big]^{1/p}\\
      = \sup_{P\in \cP_x} E^P[ |\xi-\psi|^p ]^{1/p},
  \]
  where the equality is due to~\eqref{eq:DPP} applied with $s=0$. (For the case $s=0$, the additional Assumption~\ref{ass:filtrGenCond} was not used in the previous section.) Recalling from Proposition~\ref{pr:valueFunctionCont} that $V^x_t$ maps $\UC_b(\Omega)$ to $\UC_b(\Omega_t)$, it follows that the extension maps $\L^p_{\cP_x}$ to $\L^p_{\cP_x}(\cF_t)$.
\end{proof}

\subsection{Quasi-Sure Properties of the Extension}

In this section, we provide some auxiliary results of technical nature. The first one will (quasi-surely) allow us to appeal to the results in the previous section without imposing Assumption~\ref{ass:filtrGenCond}. This is desirable since we would like to end up with quasi-sure theorems whose statements do not involve regular conditional probability distributions.

\begin{lemma}\label{le:filtrGenFollows}
  Assumption~\ref{ass:filtrGen} implies that Assumption~\ref{ass:filtrGenCond} holds for $\cP_x$-quasi-every $\eta\in \Omega$ satisfying $\eta_0=x$.
\end{lemma}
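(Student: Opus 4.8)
The statement asserts that the conditional non-degeneracy Assumption~\ref{ass:filtrGenCond} holds automatically, $\cP_x$-q.s., as a consequence of the unconditional Assumption~\ref{ass:filtrGen}. The natural route is to condition the relation $\eqref{eq:filtrGen}$ for a global solution $X=X(0,x,\nu)$ at time $t$ and along a path $\omega$, and to recognise — via Lemma~\ref{le:conditioningX} — that the conditioned process $X^{t,\omega}$ is precisely the solution $X(t,\eta,\nu^{t,\omega})$ of the shifted SDE for $\eta = x\otimes_0 \bar X(\omega)$ (with $\bar X = X(0,x,\nu)$). Since every $(t,\eta,\nu')\in[0,T]\times\Omega\times\cU^t$ arising in Assumption~\ref{ass:filtrGenCond} should be reachable this way (up to the relevant nullsets), the augmented-filtration inclusion passes from the global picture to the conditioned one.

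\textbf{Key steps.} First I would fix $\nu\in\cU$, set $\bar X := X(0,x,\nu)$, and recall that $\eqref{eq:filtrGen}$ says $B$ is adapted to $\overline{\F^{\bar X}}^{P_0}$; concretely, there is a progressively measurable transformation $\beta=\beta_\nu:\Omega^0\to\Omega^0$ with $\beta(\bar X^0)=B^0$ $P_0$-a.s., exactly as invoked in the proof of Lemma~\ref{le:shiftedMeasure} (following \cite[Lemma~2.2]{SonerTouziZhang.2010dual}). Second, I would condition this identity at time $t$: by the independence of increments of $B^0$ and the measure-conditioning machinery from Section~\ref{se:valueFct}, for $P(0,x,\nu)$-a.e.\ $\omega\in\Omega^0$ the conditioned transformation $\beta^{t,\comega}$ (with $\comega=\beta(\omega)$) witnesses that $B^t$ is adapted to $\overline{\F^{X^{t,\comega}}}^{P_0^t}$, where $X=X(0,x,\nu)$. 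Third, I would apply Lemma~\ref{le:conditioningX} to identify $X^{t,\comega}$ with the solution $X(t,\eta,\nu^{t,\comega})$ of the shifted SDE for $\eta := x\otimes_0 X(\comega)$; since $X^{t,\comega}$ and $X(t,\eta,\nu^{t,\comega})$ generate the same filtration, this gives $\overline{\F^{X(t,\eta,\nu^{t,\comega})}}^{P_0^t}\supseteq\F^t$. Finally, I would check that as $\nu$ ranges over $\cU$ and $\omega$ over (a full-measure subset of) $\Omega^0$, the pairs $(\eta,\nu^{t,\omega})$ exhaust — $\cP_x$-q.s.\ in the first coordinate — all parameters needed for Assumption~\ref{ass:filtrGenCond}; that is, for a given $(t,\eta,\nu')$ with $\eta_0=x$, one can realise $\eta$ as $x\otimes_0 X(0,x,\nu)(\omega)$ on $[0,t]$ for suitable $\nu\in\cU$ and $\omega$ outside the relevant nullset, and arrange $\nu^{t,\omega}=\nu'$. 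The $\cP_x$-q.s.\ qualifier absorbs the union over $\nu$ of the $P(0,x,\nu)$-nullsets where the conditioning identities fail.

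\textbf{Main obstacle.} The delicate point is the last step: cleanly showing that \emph{every} relevant $\eta$ (with $\eta_0=x$), outside a $\cP_x$-polar set, is hit by the conditioning construction, and that the control can be matched. One must argue that a path $\eta$ which is not of the form $x\otimes_0 X(0,x,\nu)(\omega)$ for any admissible $\nu,\omega$ lies in the complement of $\{X(0,x,\nu)^0 \in A\}$ for all $\nu$ and suitable $A$, hence is $\cP_x$-polar; here the point is that a constant/history path can always be "continued" by an arbitrary control, so the obstruction sits only on a nullset. The other technical nuisance — propagating the a.s.\ identity $\beta(\bar X^0)=B^0$ through regular conditional distributions and the concatenation operators to get $\beta^{t,\comega}(X^{t,\comega}{}^t)=B^t$ — is of the same ``tedious but routine'' character as the calculation cited in Lemma~\ref{le:shiftedMeasure}, so I would state it by analogy rather than redo it. A short remark reconciling the $\F^t$ (raw) versus augmented filtrations, and noting that the exceptional set does not depend on the choice of regular conditional probabilities beyond a $\cP_x$-polar set, would complete the argument.
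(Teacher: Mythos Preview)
Your proposal is essentially correct and uses the same ingredients as the paper (Lemma~\ref{le:conditioningX} plus conditioning of the filtration inclusion), but it is organized in the ``forward'' direction---fix a global control, condition, then try to cover all targets $(\eta,\nu')$---and this is precisely what produces the obstacle you flag at the end. The paper runs the argument ``backward'': it fixes the target $(t,\eta,\nu)$ and an arbitrary $\hat P=P(0,x,\hat\nu)\in\cP_x$ \emph{first}, and then manufactures the right global control by pasting,
\[
  \bar\nu(\tilde\omega):=\1_{[0,t)}\hat\nu(\tilde\omega)+\1_{[t,T]}\,\nu(\tilde\omega^t).
\]
This single move dissolves your obstacle: $\bar X:=X(0,x,\bar\nu)$ agrees with $\hat X:=X(0,x,\hat\nu)$ on $[0,t]$ (so $\eta$ can be taken as $\bar X(\omega)$ for $\hat P$-a.e.\ path), and $\bar\nu^{t,\omega}=\nu$ identically in $\omega$, so there is nothing to ``match''. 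The paper also packages the filtration-conditioning step you describe via $\beta$ into a clean abstract statement (Lemma~\ref{le:filtrationConditioning}: if $\overline{\F^Y}^{P}\supseteq\F^Z$ then $\overline{\F^{Y^{t,\omega}}}^{P^{t,\omega}}\supseteq\F^{Z^{t,\omega}}$ for $P$-a.e.\ $\omega$), which lets one avoid rederiving the $\beta^{t,\comega}$ identity by hand. Finally, note that your phrase ``the $\cP_x$-q.s.\ qualifier absorbs the union over $\nu$ of the $P(0,x,\nu)$-nullsets'' is not quite the right bookkeeping: a union of $P$-nullsets over varying $P$ need not be $\cP_x$-polar. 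The paper's order of quantifiers---fix $\hat P$, then show the bad $\eta$'s form a $\hat P$-nullset---is exactly what makes the $\cP_x$-q.s.\ conclusion legitimate.
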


For the proof of this lemma, we shall use the following result.

\begin{lemma}\label{le:filtrationConditioning}
  Let $Y$ and $Z$ be continuous adapted processes, $t\in[0,T]$ and let $P$ be a probability measure on $\Omega$. Then $\overline{\F^Y}^P\supseteq \F^Z$ implies that
  \[
    \overline{\F^{Y^{t,\omega}}}^{P^{t,\omega}}\supseteq \F^{Z^{t,\omega}}\quad\mbox{for}\quad P\ae\;\omega\in\Omega.
  \]
\end{lemma}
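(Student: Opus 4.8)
The plan is to reduce the statement to a countable collection of conditioning operations and then invoke the defining properties of the regular conditional probability distribution. Fix a countable family $\mathcal{C}$ of bounded continuous functionals that generates $\F^Z = \sigma(Z_r : r \in [0,T])$; for instance, coordinate evaluations $Z_{q}$ for rational $q$, or, to handle the augmentation cleanly, a countable algebra of cylinder functionals dense in the right sense. For each such functional $\varphi(Z)$, the hypothesis $\overline{\F^Y}^P \supseteq \F^Z$ means $\varphi(Z)$ is $\overline{\F^Y}^P$-measurable, i.e.\ there is an $\F^Y$-measurable $\psi_\varphi(Y)$ with $\varphi(Z) = \psi_\varphi(Y)$ $P$-a.s. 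Conditioning this single $P$-a.s.\ equality via the regular conditional probability distribution: the set where it holds has full $P$-measure, hence by the kernel property $P^\omega_t\{\varphi(Z) = \psi_\varphi(Y)\} = 1$ for $P$-a.e.\ $\omega$, and after the concatenation shift this reads $\varphi(Z^{t,\omega}) = \psi_\varphi(Y^{t,\omega})$ $P^{t,\omega}$-a.s. The key point is that there are only \emph{countably many} functionals $\varphi \in \mathcal{C}$, so the exceptional $P$-null sets can be unioned into a single $P$-null set $N$; for $\omega \notin N$, \emph{every} generator $\varphi(Z^{t,\omega})$ agrees $P^{t,\omega}$-a.s.\ with an $\F^{Y^{t,\omega}}$-measurable function, and since $\psi_\varphi(Y)$ being $\F^Y$-measurable conditions to $\psi_\varphi(Y^{t,\omega})$ being $\F^{Y^{t,\omega}}$-measurable (conditioning preserves adaptedness, as noted in the notation section), we conclude $\F^{Z^{t,\omega}} \subseteq \overline{\F^{Y^{t,\omega}}}^{P^{t,\omega}}$ for all $\omega \notin N$.

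In assembling the final $\sigma$-algebra inclusion one must be slightly careful: $\F^{Z^{t,\omega}}$ is generated by the countably many $\varphi(Z^{t,\omega})$, $\varphi \in \mathcal{C}$, each of which is $P^{t,\omega}$-a.s.\ equal to an $\overline{\F^{Y^{t,\omega}}}^{P^{t,\omega}}$-measurable random variable; a $\sigma$-algebra generated by a countable family of functions that are each a.s.-equal to measurable functions of a given complete $\sigma$-algebra is itself contained in that complete $\sigma$-algebra, which is exactly the claimed inclusion $\overline{\F^{Y^{t,\omega}}}^{P^{t,\omega}} \supseteq \F^{Z^{t,\omega}}$. (Here I use that $\overline{\F^{Y^{t,\omega}}}^{P^{t,\omega}}$ is $P^{t,\omega}$-complete by construction, so it absorbs the null modifications.) One should also recall that $Y^{t,\omega}$ and $Z^{t,\omega}$ are genuinely continuous adapted processes on the shifted space, and that the shift operation $\xi \mapsto \xi^{t,\omega}$ sends $\F^Y$-measurable functionals to $\F^{Y^{t,\omega}}$-measurable ones and the measure $P^\omega_t$ to $P^{t,\omega}$ in the compatible way already recorded in the notation subsection, so no new measure-theoretic input is needed beyond \cite[Theorem~1.3.4]{StroockVaradhan.79}.

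The main obstacle is purely bookkeeping: choosing the countable generating family $\mathcal{C}$ so that it \emph{simultaneously} generates $\F^Z$ on $\Omega$ and, after conditioning/shifting, generates $\F^{Z^{t,\omega}}$ on $\Omega^t$ for every $\omega$ — i.e.\ the generators must be robust under the shift. Coordinate functionals $Z_r(\cdot)$ for $r$ in a countable dense set do this, since continuity of sample paths lets a countable dense set of times generate the full filtration, and $Z^{t,\omega}_r = Z_r(\omega \otimes_t \cdot) - (\text{something } \cF_t\text{-measurable in }\omega)$ depends measurably on the same coordinates. Once the right countable family is fixed, everything else is the routine ``condition a single a.s.\ equality, take a countable union of null sets'' argument, and the completeness of the augmented conditional filtration does the rest.
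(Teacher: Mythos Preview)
Your approach is correct and will work once the filtration bookkeeping is tightened, but it differs from the paper's proof. The paper observes that the hypothesis $\overline{\F^Y}^P \supseteq \F^Z$ is equivalent to the existence of a single \emph{progressively measurable} transformation $\beta:\Omega\to\Omega$ with $Z=\beta(Y)$ $P$-a.s.; conditioning this one identity yields $Z^{t,\omega}=\beta(Y^{t,\omega})$ $P^{t,\omega}$-a.s.\ for $P$-a.e.\ $\omega$, and the progressive measurability of $\beta$ delivers the full filtration inclusion in one stroke. This packages both the countable reduction and the time-indexed structure into a single object, so no union of null sets or separate handling of the time parameter is needed.

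Your route via countably many coordinate functionals and a union of null sets achieves the same end, but be careful: in the paper's notation $\F^Z$ and $\F^Y$ are \emph{filtrations}, not the terminal $\sigma$-algebras you write as $\sigma(Z_r:r\in[0,T])$. So for each rational $q$ you must use that $Z_q$ is $\overline{\cF^Y_q}^{\,P}$-measurable (not merely $\overline{\cF^Y_T}^{\,P}$-measurable) to obtain an $\cF^Y_q$-measurable $\psi_q$, and then check that $\psi_q^{t,\omega}$ is $\cF^{Y^{t,\omega}}_q$-measurable --- this is exactly what progressive measurability of $\beta$ encodes automatically. Your argument is more explicit about why countability is the crux; the paper's is shorter and handles the time parameter for free.
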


\begin{proof}
  The assumption implies that there exists a progressively measurable transformation $\beta: \Omega\to\Omega$ such that
  $Z=\beta(Y)$ $P$-a.s. For $P$-a.e.\ $\omega\in \Omega$, it follows that $Z(\omega\otimes_t\cdot )=\beta(Y(\omega\otimes_t\cdot))$ $P^{t,\omega}$-a.s., which, in turn, yields the result.
\end{proof}

\begin{proof}[Proof of Lemma~\ref{le:filtrGenFollows}]
  Let $X:=X(t,\eta,\nu)$ with $\eta_0=x$, we have to show that $\overline{\F^X}^{P^t_0} \supseteq \F^t$ whenever $\eta^0$ is outside some $\cP_x$-polar set. Hence we shall fix an arbitrary $\hat{P}\in\cP_x$ and show that the result holds on a set of full measure $\hat{P}$. %

  Let $\hat{P}\in\cP_x$, then $\hat{P}=P(0,x,\hat{\nu})$ for some $\hat{\nu}\in\cU$ and $\hat{P}$ is concentrated on the image of $\hat{X}^0$, where $\hat{X}:=X(0,x,\hat{\nu})$. That is, recalling that $\eta_0=\hat{X}_0=x$, we may assume that $\eta=\hat{X}(\omega)$ for some $\omega\in\Omega$.
  Let
  \begin{equation}\label{eq:proofFiltrGenFollows}
    \bar{\nu} (\tilde{\omega}) := \1_{[0,t)} \hat{\nu}(\tilde{\omega})+\1_{[t,T]}\nu(\tilde{\omega}^t).
  \end{equation}
  Then $\bar{X}:=X(0,x,\bar{\nu})$ satisfies $\bar{X}=\hat{X}$ on $[0,t]$ and hence we may assume that $\eta=\bar{X}(\omega)$ on $[0,t]$. Using Lemma~\ref{le:conditioningX} and~\eqref{eq:proofFiltrGenFollows}, we deduce that
  \[
    \bar{X}^{t,\omega}= X\big(t,x\otimes \bar{X}(\omega),\bar{\nu}^{t,\omega}\big)=X(t,\eta,\nu)=X.
  \]
  Since $\F\subseteq\overline{\F^{\bar{X}}}^{P_0}$ by Assumption~\ref{ass:filtrGen}, we conclude that
  \[
    \overline{\F^t}^{P^t_0}\subseteq \overline{\F^{\bar{X}^{t,\omega}}}^{P^t_0} = \overline{\F^X}^{P^t_0}
  \]
  by using Lemma~\ref{le:filtrationConditioning} with $Z$ being the canonical process.
\end{proof}

The next two results show that (for $\mu\equiv 0$ and $\sigma$ positive definite) the mapping $\xi\mapsto V^x_t(\xi)$ on $\L^1_{\cP_x}$ falls into the general class of sublinear expectations considered in~\cite{NutzSoner.10}, whose techniques we shall apply in the subsequent section. More precisely, the two lemmas below yield the validity of its main condition~\cite[Assumption~4.1]{NutzSoner.10}.

The following property is known as \emph{stability under pasting} and well known to be important in non-Markovian control. It should not be confused with the pasting discussed in Remark~\ref{rk:pastingMeasures}, where the considered measures correspond to different points in time.

\begin{lemma}\label{le:stabilityUnderPasting}
  Let $\tau$ be an $\F$-stopping time and let $\Lambda\in\cF_\tau$. Let $P,P^1,P^2\in\cP_x$ satisfy
  $P^1=P^2=P$ on $\cF_\tau$. Then
  \begin{equation*}%
    \bar{P}(A):= E^P\big[P^1(A|\cF_\tau)\1_{\Lambda} + P^2(A|\cF_\tau)\1_{\Lambda^c}\big],\quad A\in\cF_T
  \end{equation*}
  defines an element of $\cP_x$.
\end{lemma}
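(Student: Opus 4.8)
The plan is to realize $\bar P$ as $P(0,x,\bar\nu)$ for a suitable control $\bar\nu\in\cU$ obtained by pasting the controls underlying $P^1$ and $P^2$ along the stopping time $\tau$. First I would recall that since $\F$ is (essentially) generated by $X=X(0,x,\nu)$ for each such $\nu$ (Assumption~\ref{ass:filtrGen}), any $P\in\cP_x$ carries a ``canonical'' control via the transformation $\beta_\nu$ of~\eqref{eq:betaNu}: writing $P^1=P(0,x,\nu^1)$, $P^2=P(0,x,\nu^2)$, $P=P(0,x,\nu)$, the processes $\nu^1,\nu^2,\nu$ can all be viewed as progressively measurable functionals of the canonical path (after composing with $\beta$). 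The candidate control is then
\[
  \bar\nu_r(\omega) := \1_{[0,\tau(\omega))}(r)\,\nu_r(\omega) + \1_{[\tau(\omega),T]}(r)\big[\nu^1_r(\omega)\1_{\Lambda}(\omega) + \nu^2_r(\omega)\1_{\Lambda^c}(\omega)\big],
\]
which is $\F$-progressively measurable because $\tau$ is a stopping time and $\Lambda\in\cF_\tau$; the integrability conditions~\eqref{eq:sigmaIntegrable} are inherited since on each piece $\bar\nu$ coincides with an admissible control.

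Next I would verify $\bar P=P(0,x,\bar\nu)$ agrees with the claimed formula. The key point is the local nature of the SDE: up to time $\tau$, $\bar\nu$ equals $\nu$, so $X(0,x,\bar\nu)=X(0,x,\nu)$ on $[0,\tau]$ by strong uniqueness, hence $\bar P=P$ on $\cF_\tau$. After $\tau$, conditionally on $\cF_\tau$ and on $\Lambda$, the equation for $X(0,x,\bar\nu)$ on $[\tau,T]$ is exactly the conditioned SDE driven by $\nu^1$ (this is the content of Lemma~\ref{le:conditioningX}, applied with the random initial time $\tau$ — or, to stay within the stated framework, applied on the events $\{\tau=t\}$ for a countable dense set of deterministic times after a standard approximation of $\tau$ from above by discrete stopping times), so the regular conditional distribution of $X(0,x,\bar\nu)$ given $\cF_\tau$ coincides $P$-a.s.\ on $\Lambda$ with that of $X(0,x,\nu^1)$, i.e.\ with $P^1(\,\cdot\mid\cF_\tau)$ since $P^1=P$ on $\cF_\tau$; symmetrically on $\Lambda^c$. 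Taking $P(d\omega)$-expectations of the conditional laws then yields the pasting formula for $\bar P$.

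The main obstacle is the bookkeeping around the \emph{random} time $\tau$: all the shift/conditioning machinery in the paper (Lemmas~\ref{le:conditioningX}, \ref{le:shiftedMeasure}, \ref{le:pasting}) is set up for deterministic times $s\le t$, and one must transfer it to a stopping time. The clean way is to first prove the statement for $\tau$ taking finitely many values, by partitioning $\Omega$ into the $\cF_\tau$-sets $\{\tau=t_i\}\cap\Lambda$ and $\{\tau=t_i\}\cap\Lambda^c$ and invoking Lemma~\ref{le:pasting} on each, then pass to a general $\tau$ by approximating from above by $\tau_n\downarrow\tau$ with $\tau_n$ discrete, checking that $P(0,x,\bar\nu_n)\to P(0,x,\bar\nu)$ weakly (via the SDE stability estimate~\eqref{eq:Gronwall} together with $P^1=P^2=P$ on $\cF_\tau\subseteq\cF_{\tau_n}$, which keeps the approximating controls pinned down up to time $\tau_n$) and that the right-hand sides converge as well by dominated convergence. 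The remaining verifications — progressive measurability of $\bar\nu$, admissibility, and the identification of the conditional laws — are routine given the results already established.
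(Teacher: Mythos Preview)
Your overall strategy—realize $\bar P$ as $P(0,x,\bar\nu)$ for a pasted control—is the same as the paper's, but the formula you write for $\bar\nu$ has a genuine gap. The controls $\nu,\nu^1,\nu^2\in\cU$ live on the Wiener space $(\Omega,P_0)$: when one solves $X(0,x,\bar\nu)$, the argument $\omega$ of $\bar\nu$ is a Brownian path, not a realization of $X^0$. With your definition, $\bar\nu_r(\omega)=\nu_r(\omega)$ for $r<\tau(\omega)$, so the switching time is $\tau(B)$ with $B$ the $P_0$-Brownian motion; strong uniqueness gives $X(0,x,\bar\nu)=X(0,x,\nu)$ on $[0,\tau(B)]$. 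But to conclude $\bar P=P$ on $\cF_\tau$ you need the two solutions to agree on $[0,\tau(X^0)]$, i.e.\ up to the stopping time evaluated on the \emph{solution}, and in general $\tau(B)\neq\tau(X^0)$. (Your remark about ``composing with $\beta$'' suggests you sensed this, but the formula does not implement it.) The paper defines instead, with $X:=X(0,x,\nu)$,
\[
  \bar\nu_r(\omega):=\1_{[0,\tau(X(\omega)^0))}(r)\,\nu_r(\omega) + \1_{[\tau(X(\omega)^0),T]}(r)\Big[\nu^1_r(\omega)\1_{\Lambda}(X(\omega)^0)+\nu^2_r(\omega)\1_{\Lambda^c}(X(\omega)^0)\Big],
\]
so that $\bar\nu=\nu$ on $[0,\tau(X^0))$ by construction; then $X(0,x,\bar\nu)=X$ on $[0,\tau(X^0)]$ and $\bar P=P$ on $\cF_\tau$ follows. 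Note that this is not circular because one uses $X=X(0,x,\nu)$, not $X(0,x,\bar\nu)$, in the definition, and the two coincide on the relevant interval a posteriori.

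Second, the approximation of $\tau$ by discrete stopping times is unnecessary and adds work (weak convergence, stability estimates) that the paper avoids. The paper argues directly at the random time: it observes that $\bar P$ is characterized by $\bar P=P$ on $\cF_\tau$ together with $\bar P^{\tau(\omega),\omega}=(P^i)^{\tau(\omega),\omega}$ for $P$-a.e.\ $\omega$ in $\Lambda$ resp.\ $\Lambda^c$, and then checks that $P_*:=P(0,x,\bar\nu)$ (with the corrected $\bar\nu$) satisfies both properties, using the $\tau$-version of the shifted-measure identity~\eqref{eq:shiftedMeasureFormula} exactly as in the proof of Lemma~\ref{le:pasting}. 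This is shorter and cleaner than reducing to Lemma~\ref{le:pasting} on level sets $\{\tau_n=t_i\}$ and passing to a limit.
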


\begin{proof}
  It follows from the definition of the conditional expectation that $\bar{P}$ is a probability measure which is characterized by the properties
  \begin{equation}\label{eq:PastingCharact}
    \bar{P}=P\mbox{ on }\cF_\tau \quad\mbox{and}\quad \bar{P}^{\tau(\omega),\omega}=
    \begin{cases}
      (P^1)^{\tau(\omega),\omega} & \mbox{for}\quad P\ae\;\omega\in\Lambda, \\
      (P^2)^{\tau(\omega),\omega} & \mbox{for}\quad P\ae\;\omega\in\Lambda^c.
    \end{cases}
  \end{equation}
  Let $\nu,\nu^1,\nu^2\in\cU$ be such that $P=P(0,x,\nu)$ and $P^i=P(0,x,\nu^i)$ for $i=1,2$. Moreover, let $X:=X(0,x,\nu)$,
  define $\bar{\nu}\in\cU$ by
  \begin{align*}
    \bar{\nu}_r(\omega)
    &:=\1_{[0,\tau(X(\omega)^0))}(r)\nu_r(\omega) \\
    & \phantom{:=}\;+ \1_{[\tau(X(\omega)^0),T]}(r)\Big[\nu^1_r(\omega)\1_{\Lambda}(X(\omega)^0) + \nu^2_r(\omega)\1_{\Lambda^c}(X(\omega)^0)\Big]
  \end{align*}
  and let $P_*:=P(0,x,\bar{\nu})\in \cP_x$. We show that $P_*$ satisfies the three properties from~\eqref{eq:PastingCharact}.
  Indeed, $\nu=\bar{\nu}$ on $[0,\tau(X^0))$ implies that $P_*=P$ on $\cF_\tau$. Moreover, as in~\eqref{eq:shiftedMeasureFormula},
  \[
    P_*^{\tau(\omega),\omega} = P\big(\tau(\omega),x\otimes_0 \omega,\bar{\nu}^{\tau(X(\omega)^0),\comega}\big)\quad \mbox{for}\quad P\ae\;\omega\in\Omega.
  \]
  Similarly as below~\eqref{eq:proofPastingShift}, we also have that
  \[
    \bar{\nu}^{\tau(X(\omega)^0),\comega} = \nu^1\big(\comega\otimes_{\tau(X(\omega)^0)} \cdot\big) = (\nu^1)^{\tau(X(\omega)^0),\comega} \quad \mbox{for}\quad P\ae\;\omega\in \Lambda.
  \]
  Therefore,
  \[
    P_*^{\tau(\omega),\omega}=P\big(\tau(\omega),x\otimes_0 \omega,(\nu^1)^{\tau(X(\omega)^0),\comega}\big)=(P^1)^{\tau(\omega),\omega} \quad \mbox{for}\quad P\ae\;\omega\in \Lambda.
  \]
  An analogous argument establishes the third property from~\eqref{eq:PastingCharact} and we conclude that $\bar{P}=P_*\in\cP_x$.
\end{proof}

The second property is the quasi-sure representation of $V^x_t(\xi)$ on $\L^1_{\cP_x}$, a result which will be generalized in Theorem~\ref{th:DPPstop} below.

\begin{lemma}\label{th:DPPesssupL1}
  Let $t\in[0,T]$ and $\xi\in\L^1_{\cP_x}$. Then
  \begin{equation}\label{eq:DPPesssupSpecialL1}
    V^x_t(\xi) = {\mathop{\esssup^P}_{P'\in \cP_x(\cF_t,P)}} E^{P'}[\xi^x|\cF_t]\quad P\as\quad \mbox{for all}\quad  P\in\cP_x,
  \end{equation}
  where $\cP_x(\cF_t,P):=\{P'\in\cP_x:\,P'=P\mbox{ on }\cF_t\}$.
\end{lemma}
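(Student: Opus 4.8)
The plan is to prove~\eqref{eq:DPPesssupSpecialL1} first for $\xi\in\UC_b(\Omega)$, where the pathwise results of Sections~\ref{se:valueFct}--\ref{se:dynamicProg} apply directly (and, at $s=0$, require only Assumption~\ref{ass:filtrGen}, not Assumption~\ref{ass:filtrGenCond}; cf.\ the proof of Lemma~\ref{le:LipschitzAndExtension}), and then to pass to a general $\xi\in\L^1_{\cP_x}$ by density of $\UC_b(\Omega)$ in $\L^1_{\cP_x}$ together with the continuity of both sides in $\xi$. Throughout I fix $P\in\cP_x$ and write $W(\xi)$ for the right-hand side of~\eqref{eq:DPPesssupSpecialL1}.

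\emph{The case $\xi\in\UC_b(\Omega)$.} For the inequality ``$\ge$'', I would fix $P'\in\cP_x(\cF_t,P)$ and apply Lemma~\ref{le:shiftedMeasure} with $s=0$ and the constant path $\bomega=x$: this gives $(P')^{t,\omega}\in\cP(t,x\otimes_0\omega)$ for $P'$-a.e.\ $\omega\in\Omega^0$, so, using the defining property of the regular conditional probability distribution and the identity $(\xi^x)^{t,\omega}=\xi^{t,x\otimes_0\omega}$,
\[
 E^{P'}[\xi^x\,|\,\cF_t](\omega)=E^{(P')^{t,\omega}}\big[\xi^{t,x\otimes_0\omega}\big]\le\sup_{Q\in\cP(t,x\otimes_0\omega)}E^{Q}\big[\xi^{t,x\otimes_0\omega}\big]=V_t(\xi;x\otimes_0\omega)=V^x_t(\xi)(\omega).
\]
Both extreme terms are $\cF_t$-measurable (recall $V_t(\xi)\in\UC_b(\Omega_t)$ by Proposition~\ref{pr:valueFunctionCont}), so the inequality holds $P'$-a.s., hence $P$-a.s.\ since $P'=P$ on $\cF_t$; passing to the essential supremum over $P'$ yields $W(\xi)\le V^x_t(\xi)$ $P$-a.s. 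For the reverse inequality ``$\le$'', I would rerun part~(ii) of the proof of Theorem~\ref{thm:DPP} with $s=0$ and $\bomega=x$: for each $\eps>0$ and each $N$ it builds a pasted control whose law $\bar P_N:=P(0,x,\bar\nu_N)$ agrees with $P$ on $\cF_t$ — so $\bar P_N\in\cP_x(\cF_t,P)$ — and satisfies the pathwise bound~\eqref{eq:proofDPPConcatMeas}, i.e.\ $V^x_t(\xi)(\omega)\le E^{\bar P_N}[\xi^x|\cF_t](\omega)+\rho(\eps)\le W(\xi)(\omega)+\rho(\eps)$ for $P$-a.e.\ $\omega\in A_N$, where $A_N\uparrow\Omega^0$ and $\rho(\eps)\to0$. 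Letting $N\to\infty$ and then $\eps\to0$ gives $V^x_t(\xi)\le W(\xi)$ $P$-a.s., which settles~\eqref{eq:DPPesssupSpecialL1} on $\UC_b(\Omega)$.

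\emph{Extension to $\xi\in\L^1_{\cP_x}$.} Pick $\xi^{(n)}\in\UC_b(\Omega)$ with $\|\xi^{(n)}-\xi\|_{L^1_{\cP_x}}\to0$. By Lemma~\ref{le:LipschitzAndExtension}, $V^x_t(\xi^{(n)})\to V^x_t(\xi)$ in $L^1_{\cP_x}$, hence $P$-a.s.\ along a subsequence. For the right-hand sides, the pointwise bound $|W(\xi^{(n)})-W(\xi)|\le {\mathop{\esssup^P}_{P'\in\cP_x(\cF_t,P)}}E^{P'}[\,|(\xi^{(n)}-\xi)^x|\,|\,\cF_t]$ holds, and the family $\{E^{P'}[\,\cdot\,|\,\cF_t]:P'\in\cP_x(\cF_t,P)\}$ is directed upward: given $P^1,P^2\in\cP_x(\cF_t,P)$, pasting them at the constant time $\tau\equiv t$ along $\Lambda:=\{E^{P^1}[\cdot|\cF_t]\ge E^{P^2}[\cdot|\cF_t]\}\in\cF_t$ via Lemma~\ref{le:stabilityUnderPasting} produces $\bar P\in\cP_x(\cF_t,P)$ with $E^{\bar P}[\cdot|\cF_t]=\max\{E^{P^1}[\cdot|\cF_t],E^{P^2}[\cdot|\cF_t]\}$ $P$-a.s. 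Hence the essential supremum is an increasing limit, so $E^P[{\mathop{\esssup^P}_{P'}}E^{P'}[\zeta|\cF_t]]=\sup_{P'}E^{P'}[\zeta]$ for $\zeta\ge0$ (using $P'=P$ on $\cF_t$), and therefore $E^P|W(\xi^{(n)})-W(\xi)|\le\sup_{P'\in\cP_x(\cF_t,P)}E^{P'}[|(\xi^{(n)}-\xi)^x|]\le\|(\xi^{(n)}-\xi)^x\|_{L^1_{\cP_x}}=\|\xi^{(n)}-\xi\|_{L^1_{\cP_x}}\to0$, the last equality being the one used in the proof of Lemma~\ref{le:LipschitzAndExtension}. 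Thus $W(\xi^{(n)})\to W(\xi)$ $P$-a.s.\ along a further subsequence; since $W(\xi^{(n)})=V^x_t(\xi^{(n)})$ $P$-a.s.\ by the first step, we get $W(\xi)=V^x_t(\xi)$ $P$-a.s. As $P\in\cP_x$ was arbitrary, this is~\eqref{eq:DPPesssupSpecialL1}.

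\emph{Main obstacle.} Apart from one point, everything is either quoted from the excerpt (Lemmas~\ref{le:shiftedMeasure}, \ref{le:LipschitzAndExtension}, \ref{le:stabilityUnderPasting}, Proposition~\ref{pr:valueFunctionCont}, Theorem~\ref{thm:DPP}) or a routine limiting/measurability argument. The genuinely load-bearing step is the upward-directedness of $\{E^{P'}[\,\cdot\,|\,\cF_t]:P'\in\cP_x(\cF_t,P)\}$, which is what legitimises interchanging $E^P$ with the essential supremum in the extension step; it rests entirely on stability under pasting (Lemma~\ref{le:stabilityUnderPasting}) and is the step I would be most careful with. A secondary check is that specialising Theorem~\ref{thm:DPP} and Lemma~\ref{le:shiftedMeasure} to $s=0$ indeed reduces Assumption~\ref{ass:filtrGenCond} to Assumption~\ref{ass:filtrGen}, so that the statement is unconditional beyond the standing hypotheses.
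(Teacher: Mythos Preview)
Your proof is correct and follows essentially the same route as the paper: the two inequalities for $\xi\in\UC_b(\Omega)$ are obtained exactly as in the paper (via Lemma~\ref{le:shiftedMeasure} and via the pasting construction~\eqref{eq:proofDPPConcatMeas} from the proof of Theorem~\ref{thm:DPP}, specialised to $s=0$), and the extension to $\L^1_{\cP_x}$ by density, upward-directedness and stability under pasting is precisely the argument the paper invokes by citing \cite[Theorem~5.4]{Nutz.10Gexp} without details. Your identification of the load-bearing step (upward-directedness via Lemma~\ref{le:stabilityUnderPasting}) and of the reduction of Assumption~\ref{ass:filtrGenCond} to Assumption~\ref{ass:filtrGen} at $s=0$ is spot on.
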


\begin{proof}
  Recall that Lemma~\ref{le:filtrGenFollows} allows us to appeal to the results of Section~\ref{se:dynamicProg}.

  (i)~We first prove the inequality ``$\leq$'' for $\xi\in\UC_b(\Omega)$.
  Fix $P\in\cP_x$. We use Step~(ii) of the proof of Theorem~\ref{thm:DPP}, in particular~\eqref{eq:proofDPPConcatMeas}, for the special case $s=0$ and obtain that for given $\eps>0$ and $N\geq1$ there exists a measure $\bar{P}_N\in\cP_x(\cF_t,P)$ such that
  \[
    V^x_t(\omega)\leq E^{\bar{P}_N}[\xi^x| \cF_t](\omega) + \rho(\eps)\quad \mbox{for}\quad P\as\;\omega\in E^1\cup\dots\cup E^N,
  \]
  where $V^x_t(\omega):=V^x_t(\xi;\omega)$.
  Since $\bigcup_{i\geq1} E^i=\Omega^0=\Omega$ $P$-a.s., we deduce that
  \[
    V^x_t(\omega)\leq \sup_{N\geq 1}E^{\bar{P}_N}[\xi^x| \cF_t](\omega) + \rho(\eps)\quad \mbox{for}\quad P\as\;\omega\in \Omega.
  \]
  The claim follows by letting $\eps\to0$.

  (ii)~Next, we show the inequality ``$\geq$'' in~\eqref{eq:DPPesssupSpecialL1} for $\xi\in\UC_b(\Omega)$.  Fix $P,P'\in\cP_x$ and recall that $(P')^{t,\omega}\in\cP(t,x\otimes_0\omega)$ for $P'$-a.s.\ $\omega\in\Omega$
  by Lemma~\ref{le:shiftedMeasure}. Therefore, \eqref{eq:DPP} applied with $s:=t$ and $t:=T$ yields that
  \[
    V^x_t(\omega)=V_t(x\otimes_0\omega) \geq E^{(P')^{t,\omega}}[\xi^{t,x\otimes_0\omega}]= E^{(P')^{t,\omega}}[(\xi^x)^{t,\omega}]=E^{P'}[\xi^x|\cF_s](\omega)
  \]
  $P'$-a.s.\ on $\cF_t$. If $P'\in\cP_x(\cF_t,P)$, then $P'=P$ on $\cF_t$ and the inequality holds also $P$-a.s. The claim follows as $P'\in \cP_x(\cF_t,P)$ was arbitrary.

  (iii) So far, we have proved the result for $\xi\in\UC_b(\Omega)$. The general case $\xi\in\L^1_{\cP_x}$ can be derived by an approximation argument exploiting the stability under pasting (Lemma~\ref{le:stabilityUnderPasting}). We omit the details since the proof is exactly the same as in~\cite[Theorem~5.4]{Nutz.10Gexp}.
\end{proof}

\section{Path Regularity for the Value Process}\label{se:modification}

In this section, we construct a c\`adl\`ag $\cP_x$-modification for $V^x(\xi)$; that is, a  c\`adl\`ag process $Y^x$ such that $Y^x_t=V^x_t(\xi)$ $\cP_x$-q.s.\ for all $t\in [0,T]$. (Recall that the initial condition $x\in\R^d$ has been fixed.)
To this end, we extend the raw filtration $\F$ as in~\cite{NutzSoner.10}: we let
$\F^+=\{\cF_{t+}\}_{0\leq t\leq T}$ be the minimal right-continuous filtration containing $\F$
and we augment $\F^+$ by the collection
$\cN^{\cP_x}$ of $(\cP_x,\cF_T)$-polar sets to obtain the filtration
\[
  \G=\{\cG_t\}_{0\leq t\leq T},\quad \cG_t:= \cF_{t+}\vee\cN^{\cP_x}.
\]
We note that $\G$ depends on $x\in\R^d$ since $\cN^{\cP_x}$ does, but for brevity, we shall not indicate this in the notation. In fact, the dependence on $x$ is not crucial: we could also work with $\F^+$, at the expense of obtaining a modification which is $\cP_x$-q.s.\ equal to a c\`adl\`ag process rather than being c\`adl\`ag itself.

We recall that in the quasi-sure setting, value processes similar to the one under consideration do not admit c\`adl\`ag modifications in general; indeed, while the right limit exists quasi-surely, it need not be a modification (cf.\ \cite{NutzSoner.10}). Both the regularity of $\xi\in \L^1_{\cP_x}$ and the regularity induced by the SDE are crucial for the following result.

\begin{theorem}\label{th:modification}
  Let $\xi\in\L^1_{\cP_x}$. There exists a $($$\cP_x$-q.s.\ unique$)$ $\G$-adapted c\`adl\`ag $\cP_x$-modification $\cE^x(\xi)=\{\cE^x_t(\xi)\}_{t\in[0,T]}$ of $\{V^x_t(\xi)\}_{t\in[0,T]}$. Moreover,
  \begin{equation}\label{eq:Yesssup}
      \cE^x_t(\xi) = \mathop{\esssup^P}_{P'\in \cP_x(\cG_t,P)} E^{P'}[\xi^x|\cG_t]\quad P\as\quad \mbox{for all}\quad P\in\cP_x,
  \end{equation}
  for all $t\in[0,T]$.
\end{theorem}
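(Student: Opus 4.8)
\textbf{Plan for the proof of Theorem~\ref{th:modification}.}
The starting point is the quasi-sure dynamic programming representation of Lemma~\ref{th:DPPesssupL1}: for each fixed $P\in\cP_x$ and each rational (or dyadic) time $t$, $V^x_t(\xi)$ equals the essential supremum over $\cP_x(\cF_t,P)$ of $E^{P'}[\xi^x|\cF_t]$. The plan is to show that the family $\{V^x_t(\xi)\}_{t}$ is, $\cP_x$-quasi-surely, a supermartingale under a suitable class of measures, and then to invoke the classical regularization theorem for supermartingales together with an aggregation argument. Concretely, I would first establish the \emph{tower property} of the extended value function: for $s\le t$, $V^x_s(\xi)=V^x_s(V^x_t(\xi))$ $\cP_x$-q.s.; for $\xi\in\UC_b(\Omega)$ this is precisely the semigroup property~\eqref{eq:DPPSemiGrp} (Theorem~\ref{thm:DPP} with initial condition $x$, using Lemma~\ref{le:filtrGenFollows} to dispose of Assumption~\ref{ass:filtrGenCond}), and for general $\xi\in\L^1_{\cP_x}$ it passes to the limit by the $1$-Lipschitz continuity of $V^x_t$ (Lemma~\ref{le:LipschitzAndExtension}) and density of $\UC_b(\Omega)$. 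Combined with $V^x_t(\xi)\ge E^{P'}[\xi^x|\cF_t]$ for $P'\in\cP_x(\cF_t,P)$ — which is the ``$\ge$'' half of Lemma~\ref{th:DPPesssupL1} — and with stability under pasting (Lemma~\ref{le:stabilityUnderPasting}), one checks that for each $P\in\cP_x$ the process $t\mapsto V^x_t(\xi)$ is a $(P,\F^+)$-supermartingale after taking conditional expectations; in fact the representation shows it is the Snell-type envelope, so no integrability issue beyond $\L^1_{\cP_x}\subseteq L^1(P)$ arises.

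Once the supermartingale property under every $P\in\cP_x$ is in hand, the construction of the càdlàg modification follows the now-standard route of~\cite{NutzSoner.10}. For each $P\in\cP_x$, the classical theorem on regularization of supermartingales (relative to the right-continuous filtration $\F^+$, whose usual conditions under $P$ are met after $P$-augmentation) gives that the right limit $\lim_{q\downarrow t,\,q\in\Q} V^x_q(\xi)$ exists $P$-a.s.\ for all $t$ simultaneously and defines a càdlàg $(P,\F^+)$-supermartingale. The key point is that this limit does not depend on $P$ up to $\cP_x$-polar sets — because the $V^x_q(\xi)$ themselves are fixed random variables, not merely $P$-equivalence classes (recall the $\cF_t$-measurable representatives coming from $\UC_b(\Omega_t)$ and its $\L^p$-closure) — so the pathwise right limit
\[
  \cE^x_t(\xi):=\limsup_{\substack{q\downarrow t\\ q\in\Q\cap[0,T]}} V^x_q(\xi),\qquad t\in[0,T),
\]
(with $\cE^x_T(\xi):=\xi^x$ and using that on the $\cP_x$-full set where the limit exists the $\limsup$ is a genuine limit) is a single $\G$-adapted càdlàg process that is simultaneously a modification of $\{V^x_t(\xi)\}$ under \emph{every} $P\in\cP_x$. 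Adaptedness to $\G$ rather than merely to $\F^+$ is exactly why we augmented by $\cN^{\cP_x}$.

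The main obstacle is verifying that $\cE^x_t(\xi)=V^x_t(\xi)$ $\cP_x$-q.s.\ — i.e.\ that the right limit is a modification and not merely a right-continuous version of a possibly-discontinuous object. Here is where both regularity ingredients flagged before the theorem enter. The classical supermartingale regularization gives $\cE^x_t(\xi)\le V^x_t(\xi)$ $P$-a.s.\ automatically; for the reverse inequality one uses right-continuity in $L^1_{\cP_x}$ of $t\mapsto V^x_t(\xi)$, which I would derive from the weak-continuity estimate Lemma~\ref{le:weakContinuity} and the Gronwall bound~\eqref{eq:Gronwall} (these control $E^{P(t,\omega,\nu)}[\cdot]$ as $t$ varies) together with the uniform continuity of $\xi$ in the $\UC_b$ case, then extended to $\L^1_{\cP_x}$ by density; $L^1$-right-continuity forces the càdlàg modification to agree with $V^x$. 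Finally,~\eqref{eq:Yesssup} is obtained from Lemma~\ref{th:DPPesssupL1} by replacing $\cF_t$ with $\cG_t$: the ``$\ge$'' direction is immediate since $\cP_x(\cG_t,P)\subseteq\cP_x(\cF_{t+},P)\subseteq\bigcap_{q>t}\cP_x(\cF_q,P)$ and one passes to the limit $q\downarrow t$ using the càdlàg property and Lemma~\ref{le:stabilityUnderPasting} to build conditioning measures agreeing up to $\cG_t$; the ``$\le$'' direction follows from the representation at dyadic times, the tower property, and a pasting argument, exactly as in the corresponding step of~\cite{NutzSoner.10}. I would state that the remaining details parallel~\cite{NutzSoner.10} and only spell out the points where the SDE structure (Lemmas~\ref{le:conditioningX},~\ref{le:shiftedMeasure},~\ref{le:pasting}) replaces the stochastic-integral structure used there.
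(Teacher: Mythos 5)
Your overall architecture agrees with the paper's: the representation of Lemma~\ref{th:DPPesssupL1} makes $V^x(\xi)$ a $(P,\F)$-supermartingale for every $P\in\cP_x$, the c\`adl\`ag candidate is the pathwise right limit along rationals (aggregated across measures because the $V^x_q(\xi)$ are genuine random variables, not $P$-equivalence classes), and the whole difficulty is the reverse inequality $\cE^x_t(\xi)\geq V^x_t(\xi)$. But your argument for that reverse inequality has a genuine gap. You propose to deduce $L^1_{\cP_x}$-\emph{right-continuity in $t$} of $t\mapsto V^x_t(\xi)$ from Lemma~\ref{le:weakContinuity} and the Gronwall bound~\eqref{eq:Gronwall}; those estimates, however, control the dependence on the history $\omega$ \emph{at a fixed time $t$}, not the dependence on $t$. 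To compare $V^x_{t_n}(\xi)$ with $V^x_t(\xi)$ for $t_n\downarrow t$ one must control, uniformly over the controls, the increments of the controlled SDE on the shrinking interval $[t,t_n]$ (equivalently, approximate a given measure in $\cP_x(\cF_t,P)$ by measures in $\cP_x(\cF_{t_n},P)$). Since $U$ is an arbitrary Borel set and $\mu,\sigma$ are Lipschitz in $\omega$ but not assumed bounded in $u$, no such uniform estimate is available under the standing assumptions; this is precisely why the right limit of a quasi-sure value process can fail to be a modification in general, as the paper warns just before the theorem.

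The paper circumvents $t$-continuity entirely. Step (i) of its proof uses the modulus of continuity of Proposition~\ref{pr:valueFunctionCont}, which is \emph{uniform in the time index}, to show that the right limit $\cE^x_t(\xi)$ is $\cP_x$-q.s.\ uniformly continuous in $\omega$ and hence (by Tietze extension) lies in $\L^1_{\cP_x}(\cF_t)$; this is extended from $\UC_b(\Omega)$ to all of $\L^1_{\cP_x}$ via the Lipschitz property of $\cE^x_t$ (Fatou plus Lemma~\ref{le:LipschitzAndExtension}). Then, since $V^x_t$ acts as the identity on $\L^1_{\cP_x}(\cF_t)$, one has $\cE^x_t(\xi)=V^x_t(\cE^x_t(\xi))$, and comparing the two essential-supremum representations~\eqref{eq:DPPesssupSpecialL1} and~\eqref{eq:Yesssup} through the tower property yields $V^x_t(\cE^x_t(\xi))\geq V^x_t(\xi)$. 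You would need to replace your right-continuity claim by this (or an equivalent) measurability-plus-monotonicity argument. A smaller point: your inclusion $\cP_x(\cF_{t+},P)\subseteq\bigcap_{q>t}\cP_x(\cF_q,P)$ is backwards, since agreement on the smaller $\sigma$-field $\cF_{t+}$ does not imply agreement on $\cF_q$; that part of the argument is in any case delegated to the supermartingale machinery of the cited reference.
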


\begin{proof}
  In view of Lemmata~\ref{le:stabilityUnderPasting} and~\ref{th:DPPesssupL1}, we obtain exactly as in
  \cite[Proposition~4.5]{NutzSoner.10} that there exists a $\cP_x$-q.s.\ unique $\G$-adapted c\`adl\`ag process $\cE^x(\xi)$ satisfying~\eqref{eq:Yesssup} and
  \begin{equation}\label{eq:rightLimit}
    \cE^x_t(\xi)=V^x_{t+}(\xi):=\lim_{r\downarrow t} V^x_r(\xi)\quad\cP_x\qs \quad \mbox{for all}\quad  0\leq t<T.
  \end{equation}
  The observation made there is that~\eqref{eq:DPPesssupSpecialL1} implies that $V^x(\xi)$ is a $(P,\F)$-supermartingale for all $P\in\cP_x$, so that one can use the  standard modification argument for supermartingales under each $P$. This argument, cf.\ \cite[Theorem~VI.2]{DellacherieMeyer.82}, also yields that
  $E^P[\cE^x_t(\xi)|\cF_{t+}]\leq V^x_t(\xi)$ $P$-a.s.\ and in particular
  \[
    E^P[\cE^x_t(\xi)]\leq E^P[V^x_t(\xi)]\quad \mbox{for all}\quad P\in\cP_x.
  \]
  Hence, it remains to show that
  \begin{equation}\label{eq:modificationIneq}
    \cE^x_t(\xi)\geq V^x_t(\xi)\quad\cP_x\qs
  \end{equation}
  for $t\in[0,T)$, which is the part that is known to fail in a more general setting. We give the proof in several steps.

  (i) We first show that $\cE^x_t$ maps $\UC_b(\Omega)$ to $\L^1_{\cP_x}(\cF_t)$, and in fact even to $\UC_b(\Omega_t)$ if a suitable representative is chosen. Let $\xi\in\UC_b(\Omega)$, $r\in(t,T]$ and set $V^x_r:=V_r^x(\xi)$. By Proposition~\ref{pr:valueFunctionCont}, there exists a modulus of continuity $\rho$ independent of $r$ such that
  \[
    |V^x_r(\omega)-V^x_r(\omega')| \leq \rho(\|\omega-\omega'\|_r).
  \]
  Hence the $\cP_x$-q.s.\  limit from~\eqref{eq:rightLimit}
  satisfies
  \[
    |V_{t+}(\omega)-V_{t+}(\omega')| \leq \rho(\|\omega-\omega'\|_r)  \quad\mbox{for all}\quad r\in(t,T]\cap\Q,\quad \cP_x\qs
  \]
  Since $\cE^x_t(\xi)=V_{t+}(\xi),$ taking the limit $r\downarrow t$ yields that
  \[
    |\cE^x_t(\xi)(\omega)-\cE^x_t(\xi)(\omega')| \leq \rho(\|\omega-\omega'\|_t)\quad \cP_x\qs
  \]
  By a variant of Tietze's extension theorem, cf.~\cite{Mandelkern.90}, this implies that $\cE^x_t(\xi)$ coincides $\cP_x$-q.s.\ with an element of $\UC_b(\Omega_t)$. In particular, $\cE^x_t(\xi)\in \L^1_{\cP_x}(\cF_t)$.

  (ii) Next, we show that $\cE_t^x$ is Lipschitz-continuous. Let $\xi,\psi \in\L^1_{\cP_x}$ and $t_n\downarrow t$.
  Using~\eqref{eq:rightLimit}, Fatou's lemma and Lemma~\ref{le:LipschitzAndExtension}, we obtain that
  \begin{align*}
    \|\cE^x_t(\xi)-\cE^x_t(\psi)\|_{L^1_{\cP_x}}
    & =  \big\|\textstyle{\lim_n} |V^x_{t_n}(\xi)- V^x_{t_n}(\psi)|\big\|_{L^1_{\cP_x}}\\
    & = \sup_{P\in \cP_x} E^P\big[\textstyle{\lim_n} |V^x_{t_n}(\xi)- V^x_{t_n}(\psi)|\big] \\
    & \leq \sup_{P\in \cP_x} \liminf_n E^P\big[ |V^x_{t_n}(\xi)- V^x_{t_n}(\psi)|\big] \\
    & \leq \|\xi-\psi\|_{L^1_{\cP_x}}.
  \end{align*}

  (iii) Let $\xi\in\L^1_{\cP_x}$. Then there exist $\xi^n\in\UC_b(\Omega)$ such that $\xi^n\to\xi$ in $L^1_{\cP_x}$ and in thus
  $\cE^x_t(\xi^n)\to\cE^x_t(\xi)$ in $L^1_{\cP_x}$ by Step~(ii). Since $\cE^x_t(\xi^n)\in\L^1_{\cP_x}(\cF_t)$ by Step~(i) and since $\L^1_{\cP_x}(\cF_t)$ is closed in $L^1_{\cP_x}$, we conclude that
  \[
    \cE^x_t(\xi)\in \L^1_{\cP_x}(\cF_t)\quad\mbox{for all}\quad\xi\in \L^1_{\cP_x}.
  \]

  (iv) Let $\xi\in\L^1_{\cP_x}$. Since $V^x_t$ is the identity on $\L^1_{\cP_x}(\cF_t)$, Step~(iii) implies that $\cE^x_t(\xi)=V^x_t(\cE^x_t(\xi))$.
  Moreover, the representations~\eqref{eq:DPPesssupSpecialL1} and~\eqref{eq:Yesssup} yield
  \begin{align*}
    V^x_t(\cE^x_t(\xi))
    & = \mathop{\esssup^P}_{P'\in \cP_x(\cF_t,P)} E^{P'}[\cE^x_t(\xi)|\cF_t] \\
    & \geq \mathop{\esssup^P}_{P'\in \cP_x(\cF_t,P)} E^{P'}\big[E^{P'}[\xi^x|\cG_t]\big|\cF_t\big] \\
    & =  \mathop{\esssup^P}_{P'\in \cP_x(\cF_t,P)} E^{P'}[\xi^x|\cF_t] \\
    & = V^x_t(\xi)\quad P\as\quad \mbox{for all}\quad P\in\cP_x.
  \end{align*}
  We conclude that \eqref{eq:modificationIneq} holds true.
\end{proof}

Since $\cE^x(\xi)$ is a c\`adl\`ag process, its value $\cE^x_\tau(\xi)$ at a stopping time $\tau$ is well defined. The following result states the quasi-sure representation of $\cE^x_\tau(\xi)$ and the quasi-sure version of the dynamic programming principle in its final form.

\begin{theorem}\label{th:DPPstop}
  Let $0\leq \varrho\leq \tau\leq T$ be $\G$-stopping times and $\xi\in\L^1_{\cP_x}$. Then
  \begin{equation}\label{eq:DPPStop}
    \cE^x_\varrho(\xi) = \mathop{\esssup^P}_{P'\in \cP_x(\cG_\varrho,P)} E^{P'}[\cE^x_\tau(\xi)|\cG_\varrho]\quad P\as\quad \mbox{for all}\quad P\in\cP_x
  \end{equation}
  and in particular
  \begin{equation*}%
    \cE^x_\varrho(\xi) = \mathop{\esssup^P}_{P'\in \cP_x(\cG_\varrho,P)} E^{P'}[\xi^x|\cG_\varrho]\quad P\as\quad \mbox{for all}\quad P\in\cP_x.
  \end{equation*}
  Moreover, there exists for each $P\in\cP_x$ a sequence $P_n\in\cP_x(\cG_\varrho,P)$ such that
  \begin{equation*}%
    \cE^x_\varrho(\xi) = \lim_{n\to\infty} E^{P_n}[\xi^x|\cG_\varrho]\quad P\as
  \end{equation*}
  with a $P$-a.s.\ increasing limit.
\end{theorem}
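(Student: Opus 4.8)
The plan is to exploit that Lemmata~\ref{le:stabilityUnderPasting} and~\ref{th:DPPesssupL1} verify \cite[Assumption~4.1]{NutzSoner.10} for the operator $\xi\mapsto V^x_t(\xi)$, so that the optional-time dynamic programming principle follows along the lines of that reference; I sketch the argument directly in the present notation, in three steps.

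First I would establish the dynamic programming principle at \emph{deterministic} times: for $0\le s\le t\le T$ and $P\in\cP_x$,
\[
  \cE^x_s(\xi)=\mathop{\esssup^P}_{P'\in\cP_x(\cG_s,P)}E^{P'}[\cE^x_t(\xi)|\cG_s]\quad P\as.
\]
The inequality ``$\ge$'' follows because, by the representation~\eqref{eq:Yesssup}, $\cE^x(\xi)$ is a $(P',\G)$-supermartingale for every $P'\in\cP_x$ (this is already recorded in the proof of Theorem~\ref{th:modification}), so $E^{P'}[\cE^x_t(\xi)|\cG_s]\le\cE^x_s(\xi)$ $P'$-a.s., hence $P$-a.s.\ when $P'=P$ on $\cG_s$; taking $\esssup^P$ over such $P'$ concludes. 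For ``$\le$'', fix $P$; for $P'\in\cP_x(\cG_s,P)$ the representation~\eqref{eq:Yesssup} at time $t$ gives $E^{P'}[\xi^x|\cG_t]\le\cE^x_t(\xi)$ $P'$-a.s., whence $E^{P'}[\xi^x|\cG_s]=E^{P'}\big[E^{P'}[\xi^x|\cG_t]\big|\cG_s\big]\le E^{P'}[\cE^x_t(\xi)|\cG_s]$, and taking $\esssup^P$ over $P'$ and applying~\eqref{eq:Yesssup} at time $s$ gives the claim. Along the way, Lemma~\ref{le:stabilityUnderPasting} applied at the deterministic time $t$ (paste $P'_1,P'_2$ along $\Lambda=\{E^{P'_1}[\xi^x|\cG_t]\ge E^{P'_2}[\xi^x|\cG_t]\}\in\cG_t$) shows the family $\{E^{P'}[\xi^x|\cG_t]:P'\in\cP_x(\cG_t,P)\}$ is upward directed, so the $\esssup$ is attained as an increasing limit.

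Next I would pass to stopping times. Given $\G$-stopping times $\varrho\le\tau$, approximate from above by $\varrho_m\downarrow\varrho$, $\tau_m\downarrow\tau$ with $\varrho_m\le\tau_m$ taking finitely many dyadic values. On each atom $\{\varrho_m=t^m_j\}\in\cG_{t^m_j}$ I would invoke the deterministic DPP, iterated backward over the finitely many values of $\tau_m$, together with finitely many applications of the pasting lemma (at the simple stopping time $\varrho_m$) to obtain
\[
  \cE^x_{\varrho_m}(\xi)=\mathop{\esssup^P}_{P'\in\cP_x(\cG_{\varrho_m},P)}E^{P'}[\cE^x_{\tau_m}(\xi)|\cG_{\varrho_m}]\quad P\as.
\]
Letting $m\to\infty$: by the c\`adl\`ag property of $\cE^x(\xi)$ (Theorem~\ref{th:modification}) and the $\cP_x$-uniform integrability built into $\xi\in\L^1_{\cP_x}$, $\cE^x_{\varrho_m}(\xi)\to\cE^x_\varrho(\xi)$ and $\cE^x_{\tau_m}(\xi)\to\cE^x_\tau(\xi)$ in $L^1_{\cP_x}$, while $\cG_{\varrho_m}\downarrow\cG_\varrho$ by right-continuity of $\G$; combining these, $E^{P'}[\cE^x_{\tau_m}(\xi)|\cG_{\varrho_m}]\to E^{P'}[\cE^x_\tau(\xi)|\cG_\varrho]$ in $L^1(P')$, and one checks that the $\esssup$ over the shrinking sets $\cP_x(\cG_{\varrho_m},P)\subseteq\cP_x(\cG_\varrho,P)$ passes to the limit, yielding~\eqref{eq:DPPStop}. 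Taking $\tau=T$ and noting $\cE^x_T(\xi)=\xi^x$ $\cP_x$-q.s.\ (immediate from~\eqref{eq:Yesssup}) gives the second identity; the increasing-sequence statement then follows from the upward directedness of $\{E^{P'}[\xi^x|\cG_\varrho]:P'\in\cP_x(\cG_\varrho,P)\}$, obtained from the pasting lemma now applied at the $\G$-stopping time $\varrho$ (legitimate up to $\cP_x$-polar sets, approximating $\varrho$ by simple stopping times).

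The delicate point is the passage to the limit in the last step: interchanging $m\to\infty$ with the conditional expectation whose conditioning $\sigma$-field $\cG_{\varrho_m}$ also varies, and with the essential supremum over the varying sets $\cP_x(\cG_{\varrho_m},P)$. This needs both the c\`adl\`ag regularity of $\cE^x(\xi)$ and the uniform-integrability half of the definition of $\L^1_{\cP_x}$, together with some bookkeeping with the augmented right-continuous filtration when the pasting lemma --- stated for $\F$-stopping times --- is invoked at a general $\G$-stopping time. These are precisely the ingredients handled in~\cite{NutzSoner.10}, so one may alternatively simply quote the corresponding statement there, its hypotheses having been verified via Lemmata~\ref{le:stabilityUnderPasting} and~\ref{th:DPPesssupL1}.
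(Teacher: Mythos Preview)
Your proposal is correct and takes essentially the same approach as the paper: both observe that Lemmata~\ref{le:stabilityUnderPasting} and~\ref{th:DPPesssupL1} verify the standing hypotheses of~\cite{NutzSoner.10}, whereupon the result follows from~\cite[Theorem~4.9]{NutzSoner.10}. The paper's proof is in fact just that one-line citation; your additional sketch of the underlying argument (deterministic-time DPP, dyadic approximation of the stopping times, upward directedness via pasting) is a faithful outline of what that reference does.
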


\begin{proof}
  In view of Lemmata~\ref{le:stabilityUnderPasting} and~\ref{th:DPPesssupL1}, the result is derived exactly as in \cite[Theorem~4.9]{NutzSoner.10}.
\end{proof}

As in~\eqref{eq:DPPSemiGrp}, the relation \eqref{eq:DPPStop} can be seen as a semigroup property
\begin{equation*}%
  \cE^x_\varrho(\xi)=\cE^x_\varrho(\cE^x_\tau(\xi)),
\end{equation*}
at least when $\cE^x_\tau(\xi)$ is in the domain $\L^1_{\cP_x}$ of $\cE^x_\varrho$. The latter is guaranteed by Lemma~\ref{le:LipschitzAndExtension} when $\tau$ is a deterministic time. However, one cannot expect $\cE^x_\tau(\xi)$ to be quasi uniformly continuous (cf.\ Remark~\ref{rk:quasiUnifCont}) for a general stopping time, for which reason we prefer to express the right hand side as in~\eqref{eq:DPPStop}.

\section{Hamilton-Jacobi-Bellman 2BSDE}\label{se:2BSDE}

In this section, we characterize the value process $\cE^x(\xi)$ as the solution of a 2BSDE.
To this end, we first examine the properties of $B$ under a fixed $P\in\cP_x$. The following result is in the spirit of~\cite[Section~8]{SonerTouziZhang.2010aggreg}.

\begin{proposition}\label{pr:BunderP}
  Let $x\in\R^d$, $\nu\in\cU$ and $P:=P(0,x,\nu)$. There exists
  a progressively measurable transformation $\beta: \Omega\to\Omega$ $($depending on $x,\nu$$)$ such that
  $W:=\beta(B)$ is a $P$-Brownian motion and
  \begin{equation}\label{eq:filtrationGenUnderP}
    \overline{\F}^{P}=\overline{\F^W}^{P}.
  \end{equation}
  Moreover, $B$ is the $P$-a.s.\ unique strong solution of the SDE
  \[
    B=\int_0^\cdot \mu(t,x+B,\nu_t(W))\,dt+\int_0^\cdot \sigma(t,x+B,\nu_t(W))\,dW_t\quad\mbox{under}\quad P.
  \]
\end{proposition}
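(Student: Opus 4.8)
The plan is to recover $W$ from $B$ by inverting the stochastic integral that defines $X(0,x,\nu)$. Set $X:=X(0,x,\nu)$, so that $P$ is the distribution of $X^0=X-x$ under $P_0$; equivalently, $B$ under $P$ has the same law as $X^0$ under $P_0$. First I would record that $X=x+\int_0^\cdot\mu(r,X,\nu_r)\,dr+\int_0^\cdot\sigma(r,X,\nu_r)\,dW_r$ under $P_0$, and note that by Assumption~\ref{ass:filtrGen}, $\overline{\F^X}^{P_0}\supseteq\F$, so (exactly as in the proof of the Remark following Assumption~\ref{ass:filtrGen}) the driving Brownian motion $W$ is itself adapted to $\overline{\F^X}^{P_0}=\overline{\F^{X^0}}^{P_0}$, hence there is a progressively measurable map $\gamma\colon\Omega\to\Omega$ with $W=\gamma(X^0)$ $P_0$-a.s. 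Setting $\beta:=\gamma$, the process $W':=\beta(B)$ then has, under $P$, the same law as $\gamma(X^0)=W$ under $P_0$; in particular $W'$ is a $P$-Brownian motion. (One must be a little careful that $\beta(B)$ is genuinely a martingale with the right bracket under $P$; this is a distributional statement, so it transfers because the pair $(B,\beta(B))$ under $P$ is distributed as $(X^0,W)$ under $P_0$, and ``being a Brownian motion'' is a property of the finite-dimensional distributions together with adaptedness, the latter being automatic since $\beta$ is progressively measurable.)

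Next I would establish~\eqref{eq:filtrationGenUnderP}. The inclusion $\overline{\F^W}^P\subseteq\overline{\F}^P$ is immediate since $W=\beta(B)$ is $\F$-progressively measurable. For the reverse inclusion, I would transport the statement $\overline{\F^X}^{P_0}=\overline{\F^W}^{P_0}$ — which holds because $X$ solves an SDE driven by $W$ with Lipschitz coefficients, so $X$ is $\overline{\F^W}^{P_0}$-adapted, while the converse inclusion is the content of Assumption~\ref{ass:filtrGen} combined with the previous paragraph — across the distributional identification $(B,\beta(B))$ under $P$ versus $(X^0,W)$ under $P_0$. Concretely, $\overline{\F^X}^{P_0}=\overline{\F^{X^0}}^{P_0}$ (translation by the constant $x$), and the equality of augmented filtrations generated by $X^0$ and by $\gamma(X^0)$ under $P_0$ is exactly the equality of augmented filtrations generated by $B$ and by $\beta(B)$ under $P$, since these filtrations depend only on the joint law of the two processes. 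This gives $\overline{\F}^P=\overline{\F^B}^P=\overline{\F^{\beta(B)}}^P=\overline{\F^W}^P$.

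Finally, for the SDE characterization: pushing the identity $X=x+\int_0^\cdot\mu(r,X,\nu_r)\,dr+\int_0^\cdot\sigma(r,X,\nu_r)\,dW_r$ through the map $X^0\mapsto B$ (under which $W\mapsto\beta(B)=W$, by construction of $\beta$), and writing $X=x+X^0$, one obtains $B=X^0=\int_0^\cdot\mu(r,x+B,\nu_r(W))\,dr+\int_0^\cdot\sigma(r,x+B,\nu_r(W))\,dW_r$ $P$-a.s.; here the appearance of $\nu_r(W)$ is because the control, originally evaluated along the path $X^0$ which carries $W=\gamma(X^0)$, becomes $\nu$ evaluated at $W$ once expressed through $B$. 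Strong uniqueness under $P$ for this equation — viewed as an equation for $B$ given the Brownian motion $W$ and with $W$ in turn determined as $\beta(B)$ — follows from the Lipschitz property~\eqref{eq:LipschitzSigma} by a standard Gronwall argument identical to the one in the proof of Lemma~\ref{le:weakContinuity}(i), noting that $\omega\mapsto\nu_r(\beta(\omega))$ is again progressively measurable so the fixed-point structure is preserved.

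The main obstacle is the bookkeeping in the change of variables: making rigorous that ``$W=\beta(B)$ is a $P$-Brownian motion'' and that the two augmented filtrations coincide, purely from the fact that $(B,\beta(B))$ under $P$ has the same law as $(X^0,W)$ under $P_0$. This requires being careful that the relevant properties (martingale property with a prescribed quadratic variation; the $\sigma$-algebra generated by a process up to null sets) are genuinely determined by the joint law and the (automatic) progressive measurability of $\beta$, and that null sets are handled consistently when passing between $P_0$ and $P$. Everything else is a routine transcription of the SDE and a Gronwall uniqueness estimate.
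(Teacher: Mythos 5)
Your proposal follows essentially the same route as the paper's proof: you invert the map $X^0\mapsto B$ via Assumption~\ref{ass:filtrGen} to get the progressively measurable $\beta$ with $\beta(X^0)=B$ $P_0$-a.s., set $W:=\beta(B)$, and exploit the identity in law of $(B,\beta(B))$ under $P$ with $(X^0,B)$ under $P_0$ to transfer the Brownian property, the filtration equality, and the SDE. The only cosmetic difference is that the paper makes the last step precise by introducing the strong solution $Y$ of the SDE driven by $W$ under $P$ and identifying $(Y,W)_P=(B,W)_P$, whereas you phrase it as pushing the identity through the change of variables; both are fine.
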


\begin{proof}
  Let $X:=X(0,x,\nu)$. As in Lemma~\ref{le:shiftedMeasure}, Assumption~\ref{ass:filtrGen} implies the
  the existence of a progressively measurable transformation $\beta: \Omega\to\Omega$ such that
  \begin{equation}\label{eq:betaNuAppend}
    \beta(X^0)=B \quad P_0\as
  \end{equation}
  Let $W:=\beta(B)$. Then
  \[
    (B,X^0)_{P_0}=(\beta(X^0),X^0)_{P_0} = (\beta(B),B)_P=(W,B)_P;
  \]
  i.e., the distribution of $(B,X^0)$ under $P_0$ coincides with the distribution of $(W,B)$ under $P$. In particular, $W$ is a $P$-Brownian motion. Moreover, we have
  $\overline{\F^{X^0}}^{P_0} = \overline{\F^{\beta(X^0)}}^{P_0}$
  by Assumption~\ref{ass:filtrGen} and therefore
  \[
    \overline{\F^B}^{P}=\overline{\F^{\beta(B)}}^{P},
  \]
  which is~\eqref{eq:filtrationGenUnderP}. Note that
  \[
    X^0=X^0(B)=\int \mu(t,x+X^0,\nu_t(B))\,dt+\int \sigma(t,x+X^0,\nu_t(B))\,dB_t
  \]
  under $P_0$. Let $Y$ be the (unique, strong) solution of the analogous SDE
  \[
    Y=\int \mu(t,x+Y,\nu_t(W))\,dt+\int \sigma(t,x+Y,\nu_t(W))\,dW_t\quad\mbox{under}\quad P.
  \]
  Using the definition of $P$ and~\eqref{eq:betaNuAppend}, we have that
  \begin{align*}
    (Y,W)_P
     &= (X^0(W),W)_P \\
     & = (X^0(B),B)_{P_0} \\
     & = (X^0,\beta(X^0))_{P_0} \\
     & = (B,\beta(B))_P\\
     & =(B,W)_P.
  \end{align*}
  In view of~\eqref{eq:filtrationGenUnderP}, it follows that $Y=B$ holds $P$-a.s.
\end{proof}

In the sequel, we denote by $M^{B,P}$ the local martingale part in the canonical semimartingale decomposition of $B$ under $P$.

\begin{corollary}\label{co:PRP}
  Let $P\in\cP_x$. Then the filtration $\overline{\F}^P$ is right-continuous. If, in addition, $\sigma$ is invertible, then
  $(M^{B,P},P)$ has the predictable representation property.
\end{corollary}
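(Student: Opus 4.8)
The plan is to push both assertions down from the $P$-Brownian motion $W$ produced by Proposition~\ref{pr:BunderP}, using the identification $\overline{\F}^P=\overline{\F^W}^P$ established there.

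For right-continuity I would simply invoke the classical fact that the $P$-augmented natural filtration of a $P$-Brownian motion is right-continuous (a consequence of Blumenthal's zero--one law together with the strong Markov property). Combined with $\overline{\F}^P=\overline{\F^W}^P$ this yields right-continuity of $\overline{\F}^P$ at once; it also makes $\overline{\F^+}^P$ coincide with $\overline{\F}^P$, so the choice of filtration in the second claim is immaterial.

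For the predictable representation property, the first step is to read off from the SDE in Proposition~\ref{pr:BunderP} that the local martingale part of $B$ under $P$ is $M^{B,P}=\int_0^\cdot \Sigma_t\,dW_t$ with $\Sigma_t:=\sigma(t,x+B,\nu_t(W))$, a progressively measurable process taking values in the invertible $d\times d$ matrices, whose inverse $\Sigma^{-1}$ is again progressively measurable since matrix inversion is continuous on the general linear group. Since $\langle M^{B,P}\rangle=\int_0^\cdot\Sigma_t\Sigma_t^\top\,dt$, the process $\Sigma^{-1}$ is $M^{B,P}$-integrable and, by associativity of the stochastic integral, $\int_0^\cdot\Sigma_t^{-1}\,dM^{B,P}_t=\int_0^\cdot\Sigma_t^{-1}\Sigma_t\,dW_t=W$; in particular $\overline{\F^{M^{B,P}}}^P=\overline{\F^W}^P=\overline{\F}^P$ (the inclusion $\supseteq$ from this identity, $\subseteq$ since $M^{B,P}$ is $\overline{\F}^P$-adapted). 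The second step is to take an arbitrary real-valued $(\overline{\F}^P,P)$-local martingale $N$, represent it via the Brownian predictable representation property as $N=N_0+\int_0^\cdot H_t^\top\,dW_t$ (legitimate since $\overline{\F}^P=\overline{\F^W}^P$ and $W$ is a $P$-Brownian motion), and substitute $dW_t=\Sigma_t^{-1}\,dM^{B,P}_t$ to obtain $N=N_0+\int_0^\cdot Z_t^\top\,dM^{B,P}_t$ with $Z_t:=(\Sigma_t^\top)^{-1}H_t$; this integral is well defined because it coincides with $\int_0^\cdot H_t^\top\,dW_t$. The $\R^n$-valued case follows componentwise, so $(M^{B,P},P)$ has the predictable representation property.

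I do not expect a genuine obstacle here: everything is a routine consequence of Proposition~\ref{pr:BunderP} and standard Brownian-motion theory. The only point requiring a line of verification is the $M^{B,P}$-integrability of $\Sigma^{-1}$ (equivalently, that $\int\Sigma^{-1}\,dM^{B,P}$ is well defined), which is immediate from $\Sigma^{-1}\Sigma=I$ and $\langle W\rangle_t=tI$.
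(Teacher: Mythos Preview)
Your proposal is correct and follows essentially the same route as the paper: both invoke Proposition~\ref{pr:BunderP} to identify $\overline{\F}^P$ with the augmented filtration of the $P$-Brownian motion $W$ (whence right-continuity), read off $M^{B,P}=\int\Sigma\,dW$ from the SDE, invert to recover $W=\int\Sigma^{-1}\,dM^{B,P}$, and then transfer the Brownian representation property to $M^{B,P}$. The paper's proof is more terse and adds the remark that $\hat\sigma$ can be made $\overline{\F}^P$-predictable by modification on a $dt\times P$-nullset, a point you leave implicit (which is harmless since the integrators are continuous).
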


The latter statement means that any right-continuous $(\overline{\F}^P,P)$-local martingale $N$ has a representation $N=N_0+\int Z\,dM^{B,P}$ under $P$, for some $\overline{\F}^P$-predictable process $Z$.

\begin{proof}
  We have seen in Proposition~\ref{pr:BunderP} that $\overline{\F}^P$ is generated by a Brownian motion $W$, hence right-continuous, and that $M^{B,P}=\int_0^\cdot \hat{\sigma}_t \,dW_t$ for $\hat{\sigma}_t:=\sigma(t,x+B,\nu_t(W))$, where $\nu\in\cU$. By changing $\hat{\sigma}$ on a $dt\times P$-nullset, we may assume that $\hat{\sigma}$ is $\overline{\F}^P$-predictable. Using the Brownian representation theorem and $W=\int \hat{\sigma}^{-1} \,dM^{B,P}$, we deduce that $M^{B,P}$ has the representation property.
\end{proof}

The following formulation of 2BSDE is, of course, inspired by~\cite{SonerTouziZhang.2010bsde}.

\begin{definition}
  Let $\xi\in L^1_{\cP_x}$ and consider a pair $(Y,Z)$ of processes with values in $\R\times\R^d$ such that
  $Y$ is c\`adl\`ag $\G$-adapted while
  $Z$ is $\G$-predictable and $\int_0^T |Z_s|^2\,d\br{B}_s<\infty$ $\cP_x$-q.s.
  Then $(Y,Z)$ is called a \emph{solution} of the 2BSDE~\eqref{eq:2bsde} if there exists a family $(K^P)_{P\in\cP_x}$ of $\overline{\F}^P$-adapted increasing processes satisfying $E^P[|K^P_T|]<\infty$ such that
  \begin{equation}\label{eq:2bsde}
   Y_t = \xi - \int_t^T Z_s\,dM^{B,P}_s + K_T^P-K_t^P,\quad 0\leq t\leq T,\quad P\as\quad\mbox{for all }P\in\cP_x
  \end{equation}
  and such that the following minimality condition holds for all $0\leq t\leq T$:
  \begin{equation}\label{eq:minimal}
    \mathop{\essinf^P}_{P'\in \cP_x(\cG_t,P)} E^{P'}\big[K_T^{P'}-K_t^{P'}\big|\cG_t\big]=0\quad P\as \quad\mbox{for all }P\in\cP_x.
  \end{equation}
\end{definition}

Moreover, a c\`adl\`ag process $Y$ is said to be \emph{of class $($D,$\cP_x$$)$} if
the family $\{Y_\tau\}_\tau$ is uniformly integrable under $P$ for all $P\in\cP_x$, where $\tau$ runs through all
$\G$-stopping times. The following is our main result.

\begin{theorem}\label{th:2bsde}
  Assume that $\sigma$ is invertible and let $\xi\in\L^1_{\cP_x}$.
  \begin{enumerate}[topsep=3pt, partopsep=0pt, itemsep=1pt,parsep=2pt]
    \item There exists a $(dt\times\cP_x$-q.s.\ unique$)$ $\G$-predictable process $Z^\xi$ such that
     \begin{equation}\label{eq:defZ}
       Z^\xi = \big(d\br{B,B}^P\big)^{-1}\,d\br{\cE^x(\xi),B}^P \quad P\as\quad\mbox{for all}\quad P\in\cP_x.
     \end{equation}
    \item The pair $(\cE^x(\xi),Z^\xi)$ is the minimal solution of the 2BSDE~\eqref{eq:2bsde}; i.e., if $(Y,Z)$ is another solution, then
            $\cE^x(\xi)\leq Y$ $\cP_x$-q.s.
    \item If $(Y,Z)$ is a solution of~\eqref{eq:2bsde} such that $Y$ is of class $($D,$\cP_x$$)$, then
        $(Y,Z)=(\cE^x(\xi),Z^\xi)$.
  \end{enumerate}
  In particular, if $\xi\in\L^p_{\cP_x}$ for some $p>1$, then $(\cE^x(\xi), Z^\xi)$ is the unique solution of~\eqref{eq:2bsde} in the class $($D,$\cP_x$$)$.
\end{theorem}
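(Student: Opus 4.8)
The plan is to verify that $(\cE^x(\xi),Z^\xi)$ fits the abstract 2BSDE framework of~\cite{NutzSoner.10} by checking their structural hypotheses, and then to invoke their wellposedness theory. Concretely, I would proceed in four steps: (1) construct the process $Z^\xi$; (2) decompose $\cE^x(\xi)$ under each fixed $P$ via a Doob--Meyer plus martingale-representation argument, producing the family $(K^P)$; (3) verify the minimality condition~\eqref{eq:minimal} from the dynamic programming principle in its final form, Theorem~\ref{th:DPPstop}; (4) invoke the comparison/uniqueness arguments of~\cite{NutzSoner.10} for items (ii), (iii) and the final class-$(\mathrm{D},\cP_x)$ statement.

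\textbf{Steps (1) and (2).} For each $P\in\cP_x$, Corollary~\ref{co:PRP} gives that $\overline{\F}^P$ is right-continuous and, since $\sigma$ is invertible, that $(M^{B,P},P)$ has the predictable representation property. By Theorem~\ref{th:DPPstop}, $\cE^x(\xi)$ is a $(P,\G)$-supermartingale of class $(\mathrm{D},\cP_x)$-type under each $P$ (more precisely, since $\xi\in\L^1_{\cP_x}$ the family of conditional expectations is $P$-uniformly integrable), hence the Doob--Meyer decomposition yields $\cE^x(\xi)=\cE^x_0(\xi)+N^P-A^P$ with $N^P$ a $P$-martingale and $A^P$ predictable increasing, $E^P[A^P_T]<\infty$. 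Applying the representation property to $N^P$ gives a predictable process $Z^P$ with $N^P=\int Z^P\,dM^{B,P}$. The aggregation step --- showing that the $Z^P$ can be chosen as a single $\G$-predictable process $Z^\xi$ satisfying~\eqref{eq:defZ}, i.e.\ that $Z^\xi=(d\br{B,B}^P)^{-1}d\br{\cE^x(\xi),B}^P$ simultaneously for all $P$ --- follows because the right-hand side of~\eqref{eq:defZ} is, up to $dt\times P$-nullsets, an $\F$-predictable functional of the paths of $\cE^x(\xi)$ and $B$, so the pathwise bracket construction of~\cite{SonerTouziZhang.2010aggreg} (or the Karandikar-type pathwise stochastic integral) applies; this is exactly the aggregation lemma used in~\cite{NutzSoner.10}. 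Then $K^P:=A^P$ and~\eqref{eq:2bsde} holds by construction.

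\textbf{Step (3): the minimality condition.} This is where Theorem~\ref{th:DPPstop} does the real work. Fix $P\in\cP_x$ and a deterministic time $t$ (the general $\G$-stopping time case is identical). Writing out~\eqref{eq:2bsde} between $t$ and $T$ and taking $E^{P'}[\,\cdot\,|\cG_t]$ for $P'\in\cP_x(\cG_t,P)$, the stochastic-integral term vanishes (it is a $P'$-martingale increment, after a localization argument using $\int_0^T|Z^\xi_s|^2 d\br{B}_s<\infty$ and the integrability of $K^{P'}$), so
\begin{equation*}
  E^{P'}[K^{P'}_T-K^{P'}_t\,|\,\cG_t]=E^{P'}[\xi^x\,|\,\cG_t]-\cE^x_t(\xi)\quad P'\as
\end{equation*}
Taking $\essinf^P$ over $P'\in\cP_x(\cG_t,P)$ and using the representation $\cE^x_t(\xi)=\esssup^P_{P'}E^{P'}[\xi^x|\cG_t]$ from Theorem~\ref{th:DPPstop} shows the infimum is $\geq 0$ trivially and $\leq 0$ by the attainability clause of Theorem~\ref{th:DPPstop} (the $P$-a.s.\ increasing approximating sequence $P_n$). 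Hence~\eqref{eq:minimal} holds, and $(\cE^x(\xi),Z^\xi)$ is a solution.

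\textbf{Step (4): minimality among solutions, uniqueness, and the final claim.} For (ii), if $(Y,Z)$ is any solution with family $(\tilde K^P)$, then subtracting~\eqref{eq:2bsde} for $Y$ from that for $\cE^x(\xi)$ and taking conditional expectations under $P'\in\cP_x(\cG_t,P)$ gives $\cE^x_t(\xi)-Y_t\leq E^{P'}[\tilde K^{P'}_T-\tilde K^{P'}_t|\cG_t]-E^{P'}[K^{P'}_T-K^{P'}_t|\cG_t]$; taking $\essinf^P$ and using $\essinf=0$ for $(\tilde K^P)$ together with $K^{P'}$ increasing yields $\cE^x_t(\xi)-Y_t\leq 0$ $P$-a.s., hence $\cE^x(\xi)\leq Y$ $\cP_x$-q.s.\ after taking a c\`adl\`ag version. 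For (iii), when $Y$ is of class $(\mathrm{D},\cP_x)$ the reverse inequality follows by the same argument applied to the supermartingale characterization: one shows $Y$ is a $(P,\G)$-supermartingale dominating $E^{P'}[\xi^x|\cG_\cdot]$ for every $P'\in\cP_x(\cG_t,P)$, hence $Y_t\geq\esssup^P_{P'}E^{P'}[\xi^x|\cG_t]=\cE^x_t(\xi)$; combined with (ii) this gives $Y=\cE^x(\xi)$ $\cP_x$-q.s., and then $Z=Z^\xi$ $dt\times\cP_x$-q.s.\ by identifying brackets via~\eqref{eq:defZ}. The final sentence follows since for $\xi\in\L^p_{\cP_x}$ with $p>1$ the minimal solution $\cE^x(\xi)$ is automatically of class $(\mathrm{D},\cP_x)$ (an $L^p$-domination / de la Vall\'ee-Poussin argument on the family of conditional expectations), so existence and uniqueness coincide. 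I would present items (2)--(4) by direct citation of the corresponding proofs in~\cite[Section~4]{NutzSoner.10}, since the arguments are verbatim once Lemmata~\ref{le:stabilityUnderPasting} and~\ref{th:DPPesssupL1}, Theorem~\ref{th:DPPstop}, and Corollary~\ref{co:PRP} are in place.

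\textbf{Main obstacle.} The delicate point is the aggregation of the $Z^P$'s into a single $\G$-predictable $Z^\xi$ and, relatedly, making sense of the stochastic integral $\int Z^\xi\,dM^{B,P}$ consistently across the mutually singular family $\cP_x$. Everything else is an application of~\cite{NutzSoner.10}; the aggregation requires the pathwise-integration machinery of~\cite{SonerTouziZhang.2010aggreg} and the invertibility of $\sigma$ (via Corollary~\ref{co:PRP}) in an essential way.
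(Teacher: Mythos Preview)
Your approach is correct and essentially identical to the paper's: both verify the structural hypotheses of \cite{NutzSoner.10} via Corollary~\ref{co:PRP}, the Doob--Meyer decomposition, a pathwise bracket/aggregation argument (the paper cites Bichteler~\cite{Bichteler.81} and \cite[Proposition~4.10]{NutzSoner.10} rather than \cite{SonerTouziZhang.2010aggreg}, but this is the same device), and Theorem~\ref{th:DPPstop}, and then defer to \cite[Theorem~4.15, Lemma~4.14]{NutzSoner.10} for (ii), (iii), and the final class-$(\mathrm{D},\cP_x)$ claim.

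Two small slips to correct before you write this up. In Step~(3) the displayed identity has the wrong sign: from $Y_t=\xi-\int_t^T Z\,dM^{B,P'}+K^{P'}_T-K^{P'}_t$ one obtains $E^{P'}[K^{P'}_T-K^{P'}_t\,|\,\cG_t]=\cE^x_t(\xi)-E^{P'}[\xi^x\,|\,\cG_t]$, not the reverse; your subsequent reasoning is then consistent with this corrected version. In Step~(4), your sketch for (iii) actually re-derives $Y\geq\cE^x(\xi)$, which is (ii); the reverse inequality $Y\leq\cE^x(\xi)$ is the one that uses the minimality of $(\tilde K^{P})$ together with the class-$(\mathrm{D},\cP_x)$ hypothesis: from $Y_t-\cE^x_t(\xi)=E^{P'}[\tilde K^{P'}_T-\tilde K^{P'}_t\,|\,\cG_t]-E^{P'}[K^{P'}_T-K^{P'}_t\,|\,\cG_t]\leq E^{P'}[\tilde K^{P'}_T-\tilde K^{P'}_t\,|\,\cG_t]$ one takes the essential infimum over $P'$ and invokes~\eqref{eq:minimal} for $(\tilde K^{P})$. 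Since you ultimately cite \cite[Section~4]{NutzSoner.10} for these steps, neither slip is fatal, but the written computations should be fixed.
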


\begin{proof}
  Given two processes which are (c\`adl\`ag) semimartingales under all $P\in\cP_x$, their
  quadratic covariation can be defined $\cP_x$-q.s.\ by using the integration-by-parts formula and Bichteler's pathwise stochastic integration \cite[Theorem~7.14]{Bichteler.81}; therefore, the right hand side of~\eqref{eq:defZ} can be used as a definition of $Z^\xi$.
  The details of the argument are as in \cite[Proposition~4.10]{NutzSoner.10}.

  Let $P\in\cP_x$. By Proposition~\ref{pr:BunderP}, $B$ is an It\^o process under $P$; in particular, we have $\br{B,S}^P=\br{M^{B,P},S}^P$ $P$-a.s.\ for any
  $P$-semimartingale $S$. The Doob-Meyer theorem under $P$ and Corollary~\ref{co:PRP} then yield the decomposition
  \[
    \cE^x(\xi) = \cE^x_0(\xi) + \int Z^\xi\,dM^{B,P} - K^P\quad P\as
  \]
  and we obtain (ii) and (iii) by following the arguments in \cite[Theorem~4.15]{NutzSoner.10}.
  If $\xi\in\L^p_{\cP_x}$ for some $p\in (1,\infty)$, then $\cE^x(\xi)$ is of class $($D,$\cP_x$$)$ as a consequence of Jensen's inequality (cf.\ \cite[Lemma~4.14]{NutzSoner.10}). Therefore, the last assertion follows from the above.
\end{proof}

We conclude by interpreting the canonical process $B$, seen under the ``set of scenarios'' $\cP_x$, as a model for drift and volatility uncertainty in the Knightian sense.

\begin{remark}\label{rk:randomG}
  Consider the set-valued process
  \[
    \bD_t(\omega):=\big\{\big(\mu(t,\omega,u),\sigma(t,\omega,u)\big):\,u\in U\big\}\subseteq \R^d\times\R^{d\times d}.
  \]
  In view of Proposition~\ref{pr:BunderP}, each $P\in\cP_x$ can be seen as a scenario in which the
  drift and the volatility (of $B$) take values in $\bD$, $P$-a.s. Then, the upper expectation $\cE^x(\xi)$ is the corresponding  worst-case expectation (see~\cite{NutzSoner.10} for a connection to superhedging in finance). Note that $\bD$ is a random process although the coefficients of our controlled SDE are non-random. Indeed, the path-dependence of the SDE translates to an $\omega$-dependence in the weak formulation that we are considering.

  In particular, for $\mu\equiv 0$, we have constructed a sublinear expectation similar to the random $G$-expectation of~\cite{Nutz.10Gexp}. While the latter is defined by specifying a set-valued process like $\bD$ in the first place, we have started here from a controlled SDE under $P_0$. It seems that the present construction is somewhat less technical that the one in~\cite{Nutz.10Gexp}; in particular, we did not work with the process $\hat{a}=d\br{B}_t/dt$ which played an important role in~\cite{SonerTouziZhang.2010dual} and~\cite{Nutz.10Gexp}. However, it seems that the Lipschitz conditions on $\mu$ and $\sigma$ are essential, while~\cite{Nutz.10Gexp} merely used a notion of uniform continuity.
\end{remark}

\newcommand{\dummy}[1]{}

\end{document}